\pdfoutput=1
\documentclass[journal,twoside,web]{ieeecolor}

\usepackage{amsmath,amsfonts,amssymb,bm,bbm,mathdots,mathtools,bigints}  
 
\usepackage{amsthm}

\usepackage[ruled,vlined,linesnumbered]{algorithm2e}	
\usepackage[noadjust]{cite}                             
\usepackage{algorithmic}                                
\usepackage{hyperref}                                   
\usepackage{textcomp}                                   
\usepackage{generic}                                    
\usepackage{enumerate}                                  
\usepackage{microtype}                                  
\usepackage{xcolor,tikz}                                

\hypersetup{hidelinks}

\newtheorem{theorem}{Theorem}
\newtheorem{lemma}{Lemma}
\newtheorem{corollary}{Corollary}[theorem]

\newtheorem{assumption}{Assumption}

\newtheorem{remark}{Remark}

\theoremstyle{definition}
\newtheorem{definition}{Definition}


\def\BibTeX{{\rm B\kern-.05em{\sc i\kern-.025em b}\kern-.08emT\kern-.1667em\lower.7ex\hbox{E}\kern-.125emX}}
\markboth{\hskip25pc }{Neto \MakeLowercase{\textit{et al.}}: SLS-BRD: A system-level approach to seeking generalised feedback Nash equilibria}

\newcommand{\sig}[1]{\bm{#1}}
\newcommand{\sigset}[1]{\bm{\mathcal{#1}}}

\newcommand*{\tran}{{\mkern-1.5mu\mathsf{T}}}

\DeclareMathOperator*{\argmin}{arg\,min}

\newcommand\copyrighttext{%
  \footnotesize This is the Accepted Manuscript of an article to appear in the IEEE Transactions on Automatic Control, available at: \href{https://doi.org/10.1109/TAC.2025.3568560}{https://doi.org/10.1109/TAC.2025.3568560}}
\newcommand\copyrightnotice{%
\begin{tikzpicture}[remember picture,overlay]
    \node[anchor=south,yshift=10pt] at (current page.south) {\fbox{\parbox{\dimexpr\textwidth-\fboxsep-\fboxrule\relax}{\copyrighttext}}};
\end{tikzpicture}%
}

\title{SLS-BRD: A system-level approach to seeking generalised feedback Nash equilibria}

\author{
    Otacilio B. L. Neto, Michela Mulas, and Francesco Corona%
    \thanks{Otacilio B. L. Neto and Francesco Corona are with the Department of Chemical and Metallurgical Engineering, Aalto University, 02150 Espoo, Finland (e-mails: otacilio.neto@aalto.fi, franciscu.corona@gmail.com). }
    \thanks{Michela Mulas is with the Department of Teleinformatics Engineering, Federal University of Cear\'a, 60455-760 Fortaleza-CE, Brazil (e-mail: michela.mulas@ufc.br).}%
}

\begin{document} \sloppy%
\maketitle%
\copyrightnotice%
\begin{abstract} 
    This work proposes a policy learning algorithm for seeking generalised feedback Nash equilibria (GFNE) in $N_P$-player noncooperative dynamic games. 
    We consider linear-quadratic games with stochastic dynamics and design a best-response dynamics in which players update and broadcast a parametrisation of their state-feedback policies. 
    Our approach leverages the System Level Synthesis (SLS) framework to formulate each player's update rule as the solution to a robust optimisation problem. 
    Under certain conditions, rates of convergence to a feedback Nash equilibrium can be established. 
    The algorithm is showcased in exemplary problems ranging from the decentralised control of unstable systems to competition in oligopolistic markets.
\end{abstract}

\begin{IEEEkeywords}
    Noncooperative games, best-response dynamics, feedback Nash equilibrium, system level synthesis
\end{IEEEkeywords}

\section{Introduction} \label{sec: Introduction}

\IEEEPARstart{M}{odern} cyber-physical systems are often comprised of interacting subsystems operated locally by noncooperative decision-making agents. 
Ideally, each agent operate its subsystem according to a feedback policy that optimise local objectives, while satisfying global constraints and being robust to their rivals' interference. 
However, the large-scale, decentralised, and multi-objective nature of such applications hinders most traditional approaches to policy design. 
Dynamic game theory provides an alternative framework through the concept of noncooperative equilibria (e.g., the generalized Nash equilibrium \cite{Basar1998,Facchinei2010}) describing efficient, yet strategically stable, strategies for each agent. 
Under a dynamic game setting, decision-making agents must design feedback policies (i.e., strategies) to operate their subsystems while aware that their choice affects (and is affected by) the other agents' choice. 
An equilibrium would then correspond to a set of policies that satisfy the global constraints and is agreeable to all agents given their objectives.
A policy design based on generalized Nash equilibrium seeking thus presents a promising venue for the decentralised control of cyber-physical systems.

In general, solving a noncooperative game has distinct goals: 
\textit{i)} For the agents, to design efficient and robust policies in competitive environments;
\textit{ii)} for the game designer, to examine (and potentially control) the local and global behaviour of rational agents. 
Regardless, obtaining a Nash equilibrium (NE) is a notoriously difficult task \cite{Daskalakis2009}. 
Algorithmic game theory thus emerges as the field concerned with designing methods to bridge this computational gap \cite{Nisan2011}.
An important class of algorithms, denoted \textit{equilibrium-seeking} methods, places the task of searching for an equilibrium on the players: 
These include best-response dynamics (BRD, \cite{Cortes2014, Grammatico2017, Swenson2018}), no-regret learning (NRL, \cite{Cesa-Bianchi2006, Gordon2008, Celli2020}), and operator-splitting \cite{Yi2019,Belgioioso2022a} methods. 
Broadly, these are fixed-point methods designed for (repeated) static games centred on the idea of players improving their strategies using only the information available to them. 
Aside from enabling the computation of Nash equilibria, these routines have the advantage of mimicking how noncooperative players would learn their policies in reality.
In particular, best-response dynamics stands out as a simple, yet fundamental, model of policy learning for uncoordinated but communicating players. 
This method has become an important tool in economics and engineering, with applications in networked systems \cite{Saad2012, Maghsudi2015}, robotics \cite{Wang2019, Fisac2019, Spica2020}, and resource management \cite{Braverman2018, Aussel2020}.  

We are interested in Nash equilibrium seeking algorithms for dynamic games in which the underlying system has linear, stochastic, and potentially unstable dynamics.
The focus is on \textit{closed-loop perfect state} information structures, as they yield equilibrium policies that are less sensitive to modelling and decision errors than their \textit{open-loop} counterparts \cite{Basar1998}.
Specifically, we consider the relevant task of players learning a generalised feedback Nash equilibrium (GFNE) of state-feedback policies which: 
\textit{i)} stabilise the system against disturbances, 
\textit{ii)} satisfy constraints on the control and state signals, and 
\textit{iii)} incorporate some specific structure (e.g., encoding communication delays).
In the current practice, feedback Nash equilibria is obtained by either applying dynamic programming principles \cite{Basar1998, Engwerda2000, Vrabie2010, Kossioris2008, Kamalapurkar2014, Tanwani2019}, deriving equivalent complementary problems \cite{Reddy2017,Reddy2019}, or using iterative heuristics \cite{Fridovich-Keil2020a,Fridovich-Keil2020b}. 
Recently, \cite{Laine2023} extended the scope to provide a systematic method to compute (approximate) GFNE.
While remarkable, these solutions cannot enforce either closed-loop stability, operational constraints, or design of the policy structure, in practice.
Moreover, most of them still require some central coordinator to solve the game; the players themselves do not perform equilibrium-seeking.
The available literature on decentralised policy learning is concentrated on the field of reinforcement learning, where the focus is on finite-duration games described by Markov decision processes \cite{Gurdal2017,Gao2021,Zhang2021,Bora2022,Bora2023}.
To the best of our knowledge, there are no equilibrium-seeking solutions to the aforementioned class of GFNE problems.

In this work, we propose a GFNE seeking algorithm to bridge this research gap. 
Leveraging the System Level Synthesis (SLS, \cite{Anderson2019}) framework, we first recast each player's policy synthesis problem as the search for an optimal \textit{system level response} to disturbances.
The system level response can be used to recover a corresponding optimal feedback policy, thus serving as its parametrisation.
Under this setup, each player's best-response strategy is the solution to a tractable optimisation problem which enforces closed-loop stability, operational constraints (on the control and state signals), and structural constraints (on the policy's parameters), already at the synthesis level.
We then design a best-response routine in which players update the parametrisation of their policies, in parallel, and then announce them through some communication network (Figure \ref{fig: SLSBRD_Diagram}). 
The algorithm does not depend on the state and actions applied to the underlying system and thus can be executed simultaneously with its operation. 
In summary, our contributions are:
\begin{enumerate}[(i)]
    \item 
    A realisation of the associated best-response mappings in GFNE problems as robust convex optimisation problems which are amenable to numerical solutions.
    \item 
    A system-level best-response dynamics (SLS-BRD) algorithm for GFNE seeking in dynamic games. 
    In this equilibrium-seeking routine, players converge to a GFNE by iteratively updating the parametrisation of their policies as the best response to their rivals' current policies.
    \item 
    In the absence of shared constraints, a formal analysis of the convergence properties of the SLS-BRD algorithm.
\end{enumerate}
This policy learning algorithm is demonstrated in simulated experiments on the decentralised control of an unstable network and price management in a competitive oligopolistic market. 

\begin{figure}[tb!] \centering 
    \includegraphics[width=\columnwidth]{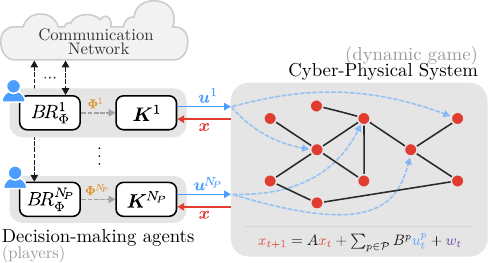}

    \caption{SLS-BRD: Control architecture and the learning dynamics. A set of players ($1, \ldots, N_P$) control a decentralised dynamical system by measuring its state ($\sig{x}$) and then applying actions ($\sig{u}^1, \ldots, \sig{u}^{N_P}$) based on state-feedback policies ($\sig{K}^1, \ldots, \sig{K}^{N_P}$) whose parametrisations ($\Phi^1, \ldots, \Phi^{N_P}$) are chosen as best responses to each others' strategies, simultaneously, and broadcasted via a communication network.}
    \label{fig: SLSBRD_Diagram}
\end{figure}

The paper is organised as follows: 
Section \ref{sec: Preliminaries} overviews the classes of (generalised) static and dynamic games, and the best-response dynamics algorithm. 
In Section \ref{sec: BRD_via_SLS}, we provide a system level parametrisation for linear-quadratic games, then design a best-response dynamics for GFNE seeking. 
Finally, Section \ref{sec: Examples} illustrates this approach in exemplary problems and Section \ref{sec: Concluding_Remarks} provides concluding remarks. 
Towards a concise presentation, only essential results are given in the main text: 
We refer to the Supplementary Material for remaining details.  

\subsection{Notation} \label{sec: Introduction_Notation}

We use Latin letters to denote vectors and functions, and boldface to distinguish signals, operators, and their respective spaces. 
Sets are in calligraphic font; exceptions are the usual $\mathbb{R}$ and $\mathbb{N}$, and the sets of $(N {\times} N)$ symmetric ($\mathbb{S}^N$), positive semidefinite ($\mathbb{S}^N_+$), and positive definite matrices ($\mathbb{S}^N_{++}$). 
In particular, sequences are written as $\sig{x} = (x_t)_{t\in\mathcal{I}}$ for a countable set $\mathcal{I} \subseteq \mathbb{N}$, or $\sig{x} = (x_t)_{t=0}^{T}$ if $\mathcal{I} = \{0,\ldots,T\}$. 
For $p \in (0,\infty)$, we define the space of $N_x$-dimensional vector-sequences $\ell^{N_x}_p(\mathcal{I}) = \{ \sig{x} : \| \sig{x} \|_{\ell_p} = (\sum_{t\in\mathcal{I}} \| x_t \|^p)^{1/p} < \infty \}$, with $\ell^{N_x}_{\infty}$ the space of all bounded sequences. 
The set $\sigset{Y}^{\sigset{X}}$ denotes all relations $\sig{A} : \sigset{X} \to \sigset{Y}$ with $\mathcal{L}(\sigset{X},\sigset{Y}) \subseteq \sigset{Y}^{\sigset{X}}$ being the set of bounded linear operators. 
We sometimes write transformed signals as $\sig{A}\sig{x} = (Ax_t)_{t\in\mathcal{I}}$. 
We use the standard definition of Hardy spaces $\mathcal{H}_{\infty}$ and $\mathcal{RH}_{\infty}$, and write $\frac{1}{z}\mathcal{RH}_{\infty}$ for the set of real-rational strictly-proper transfer functions. 
Finally, signals and operators used in this paper include: 
The impulse signal $\sig{\delta} = (\delta_{t})_{t\in\mathcal{I}}$, the identity operator $I$ and matrix $I_{N_x}$, the matrices $\bm{1}_{n\times m}$ and $\bm{0}_{n\times m}$ of all 1's and 0's, respectively, the shift operator $\sig{S_+} : (x_0,x_1,\ldots) \mapsto (0, x_0, \ldots)$, and the Kronecker and Hadamard products $\otimes$ and $\odot$, respectively.

We distinguish set-valued mappings from ordinary functions using the notation $F : \mathcal{X} \rightrightarrows \mathcal{Y}$. 
A mapping is $L_F$-Lipschitz if $\| a - b \| \leq L_F \| x - y \|$ for all $x,y \in \mathcal{X}$, $a \in F(x), b \in F(y)$ and appropriate norm $\|\cdot\|$. 
$F$ is said to be nonexpansive if $L_F = 1$ and contractive if $L_F < 1$. 
For any tuple $s = (s^p)_{p\in\mathcal{P}} \in \mathcal{S}$ we frequently write $s = (s^p, s^{-p})$ to highlight the element $s^p$; this should not be interpreted as a re-ordering. 
Similarly, if $\mathcal{S} = \prod_{p\in\mathcal{P}}\mathcal{S}^{p}$, we define the product $\mathcal{S}^{-p} = \prod_{\tilde{p}\in\mathcal{P}\backslash\{p\}}\mathcal{S}^{\tilde{p}}$.

\section{Noncooperative games and best-response dynamics} \label{sec: Preliminaries}

A (static) $N_P$-player game, denoted by a tuple 
\begin{equation} \label{eq: Static_Game}
    \mathcal{G} \coloneqq (\mathcal{P},\{ \mathcal{S}^p \}_{p\in\mathcal{P}},\{ L^p \}_{p\in\mathcal{P}}),
\end{equation}
defines the problem in which \textit{players} $p \in \mathcal{P} = \{1, \ldots, N_P\}$ each decides on a strategy $s^p \in S^p(s^{-p}) \subseteq \mathcal{S}^p$ to minimise an objective function $L^p : \mathcal{S}^1 \times \cdots \times \mathcal{S}^{N_P} \to \mathbb{R}$. 
The strategy spaces $S^p$ ($\forall p\in\mathcal{P}$) determine the actions available to the players, with the mappings $S^{p} : \mathcal{S}^{-p} \rightrightarrows \mathcal{S}^p$ restricting this choice based on the actions from their opponents. 
As such, both the players' objectives and feasible strategies depend explicitly on their competitors' actions. 
Finally, the players are assumed to be rational, noncooperative, and acting simultaneously. 

A solution to the game $\mathcal{G}$ is understood as a strategy profile $s = (s^1, \ldots, s^{N_P}) \in \mathcal{S}$, $\mathcal{S} = \mathcal{S}^1 \times \cdots \times \mathcal{S}^{N_P}$, having some specified property that makes it agreeable to all players if they act rationally. 
In noncooperative settings, a widely accepted solution concept is that of a generalized Nash equilibrium: 
The game is \textit{solved} when no player can improve its objective by unilaterally deviating from the agreed strategy profile. 
Formally,
\begin{definition} \label{def: Nash_Equilbrium}
    A strategy profile $s^{\star} = (s^{1^{\star}},\ldots,s^{N_P^{\star}}) \in \mathcal{S}$ is a generalized Nash equilibrium (GNE) for the game $\mathcal{G}$ if
    \begin{equation} \label{eq: Nash_Equilbrium}
        L^p(s^{p^{\star}},s^{-p^{\star}}) \leq \min_{s^p \in S^p(s^{-p^{\star}})} L^p(s^p,s^{-p^{\star}}).
    \end{equation}
    holds for every player $p \in \mathcal{P}$.
\end{definition}

In general, the set of GNEs that solve a game $\mathcal{G}$,
\begin{equation*}
    \Omega_{\mathcal{G}} \coloneqq \{ s^{\star} \in \mathcal{S} : s^{\star} \text{ satisfies Eq. } \eqref{eq: Nash_Equilbrium} \},
\end{equation*}
is not a singleton and can include strategy profiles that favour a specific subset of players (that is, \textit{non-admissible} GNEs \cite{Basar1998}). 
The game might also not admit any GNE (i.e., $\Omega_{\mathcal{G}} = \emptyset$), then being characterised as unsolvable. 
Hereafter, we ensure that the problems being discussed are well-posed by considering the following conditions on their primitives:
\begin{assumption} \label{as: Nash_Equilbrium_Existence}
    For each player $p \in \mathcal{P}$,
    \begin{enumerate}[a)]
        \item 
        the objective  $L^p : \mathcal{S}^1 \times \cdots \mathcal{S}^{N_P} \to \mathbb{R}$ is jointly continuous in all of its arguments and convex in the $p$-th argument, $s^p \in S^{p}(s^{-p})$, for every $s^{-p} \in \mathcal{S}^{-p}$.
        
        \item
        the mapping $S^p : \mathcal{S}^{p} \rightrightarrows \mathcal{S}^{-p}$ takes the form
        \begin{equation*}
            S^p(s^{-p}) \coloneqq \{ s^p \in \mathcal{S}^p : (s^p,s^{-p}) \in \mathcal{S}_{\mathcal{G}} \},
        \end{equation*}
        where $\mathcal{S}_{\mathcal{G}}$ is some global constraint set shared by all players. 
        Moreover, $\mathcal{S}^p$ and $\mathcal{S}_{\mathcal{G}}$ are both compact convex sets and they satisfy $\mathcal{S}_{\mathcal{G}} \cap (\mathcal{S}^1 \times \cdots \times \mathcal{S}^{N_P}) \neq \emptyset$.
    \end{enumerate}
\end{assumption}
Under Assumption \ref{as: Nash_Equilbrium_Existence}, a generalisation of the Kakutani fixed-point theorem ensures that $\mathcal{G}$ has a GNE, that is, $\Omega_{\mathcal{G}} \ne \emptyset$ \cite{Glicksberg1952}. 
In practice, these conditions consider each objective to have a unique optimal value, while imposing the feasible set of strategies to be nonempty and coupled only through a common constraint. 
Although restrictive, these assumptions still cover a broad class of problems of practical relevance.

\begin{algorithm}[b!] \label{alg: PrototypicalLD}
    \caption{Prototypical learning dynamics}
    \KwIn{Game $\mathcal{G} \coloneqq (\mathcal{P},\{ \mathcal{S}^p \}_{p\in\mathcal{P}},\{ L^p \}_{p\in\mathcal{P}})$}
    \KwOut{GNE $s^{\star} = (s^{1^{\star}},\ldots,s^{N_P^{\star}})$} 

    \vspace*{0.5em}
    Initialize $s_{0} \coloneqq (s_{0}^1,\ldots,s_{0}^{N_P})$ and $k \coloneqq 0$\;
    \For{$k = 0, 1, 2, \ldots$}{
        \lIf{$s_k \in \Omega_{\mathcal{G}}$}{\Return{$s_{k}$}}
        \For{$p \in \mathcal{P}$}{
            Update $s^p_{k+1} \in T^p( s^p_{k}, R^p(s_{k}) \mid L^p )$\;
        }%
    }%
\end{algorithm}

We investigate algorithms for solving $\mathcal{G}$. 
A direct computation of a GNE is equivalent to solving $N_P$ optimisation problems simultaneously, as implied by Definition \ref{def: Nash_Equilbrium}. 
Such an approach would require players to be coordinated and their objectives to be public. 
Conversely, we consider adaptive procedures in which the players learn their GNE strategies independently. 
In this direction, consider that $\mathcal{G}$ admits episodic repetitions, and let $s_k \coloneqq (s_k^1, \ldots, s_k^{N_P})$ be the strategy profile taken by players $\mathcal{P}$ at the $k$-th episode. 
A prototypical learning routine for equilibrium seeking is outlined in Algorithm \ref{alg: PrototypicalLD}, with
\begin{itemize}
    \item 
    $T^p : \mathcal{S}^p \times \mathcal{S}^{-p} \rightrightarrows \mathcal{S}^p$ describing how the $p$-th player updates its strategy, based on a prediction of its opponents' next actions, given its individual objective; 
    \item 
    $R^p : \mathcal{S} \rightrightarrows \mathcal{S}^{-p}$ describing how the $p$-th player predicts its opponents' strategies for the next episode, based on the strategy profile currently being played.
\end{itemize}
Algorithm \ref{alg: PrototypicalLD} belongs to the class of fixed-point methods: 
Its termination implies that $s^{\star}$ is a fixed-point of both $T^p$ and $R^p$, that is, $s^{\star} \in T^p(s^{p^{\star}}, R^p(s^{\star})) \subseteq T^p(s^{p^{\star}}, s^{-p^{\star}})$. 
In its general form, it is difficult to establish the conditions (and convergence rates) for these learning dynamics to approach an equilibrium. 
In this work, we build upon a specific yet fundamental instance of this algorithm: The best-response dynamics (BRD). 
This routine is overviewed in the following.

\subsubsection*{Best-response dynamics}

Let the map $BR^p : \mathcal{S}^{-p} \rightrightarrows \mathcal{S}^{p}$,
\begin{equation} \label{eq: Best_Response}
    BR^p(s^{-p}) \coloneqq \argmin_{s^p \in S^p(s^{-p})} L^p(s^p,s^{-p})
\end{equation}
denote the best-response of $p \in \mathcal{P}$ to other players' strategies. 
Collectively, $BR(s) \coloneqq BR^1(s^{-1}) \times \cdots \times BR^{N_P}(s^{-N_P}) \subseteq \mathcal{S}$ is the joint best-response to any given profile $s \in \mathcal{S}$. 
From Definition \ref{def: Nash_Equilbrium}, a strategy profile $s^{\star} = (s^{1^{\star}},\ldots,s^{N_P^{\star}}) \in \mathcal{S}$ is a GNE for $\mathcal{G}$ whenever $s^{\star} \in BR(s^{\star})$ or, equivalently, 
\begin{equation} \label{eq: BR_Equilbrium}
    s^{p^{\star}} \in BR^p(s^{-p^{\star}}), \quad \forall p \in \mathcal{P}.    
\end{equation}
The task of computing a Nash equilibrium can thus be translated into the search for a fixed-point of the set-valued mapping $BR : \mathcal{S} \rightrightarrows \mathcal{S}$ \cite{Swenson2018}. 
The set of GNE solutions for $\mathcal{G}$ is the set of all such fixed-points, $\Omega_{\mathcal{G}} \coloneqq \{ s^{\star} \in \mathcal{S} : s^{\star} \in BR(s^{\star}) \}$. 
A natural procedure for GNE seeking consists of players adapting their strategies towards best-responses to their rivals' strategies, which they assume will remain constant. 
Formally, 
\begin{equation}
    T^p(s^p_{k}, R^p(s_{k})) \coloneqq (1 {-} \eta) s^p_{k} + \eta BR^p(R^p(s_{k})),
\end{equation}
given $R^p(s_k) = s_k^{-p}$  and a learning rate factor of $\eta \in (0,1)$. 
This learning dynamics, summarised in Algorithm \ref{alg: BRD}, is known as (discrete-time) best-response dynamics.

\begin{algorithm}[b!] \label{alg: BRD}
    \caption{Best-Response Dynamics (BRD)}
    \KwIn{Game $\mathcal{G} \coloneqq (\mathcal{P},\{ \mathcal{S}^p \}_{p\in\mathcal{P}},\{ L^p \}_{p\in\mathcal{P}})$}
    \KwOut{GNE $s^{\star} = (s^{1^{\star}},\ldots,s^{N_P^{\star}})$} 

    \vspace*{0.5em}
    Initialize $s_{0} \coloneqq (s_{0}^1,\ldots,s_{0}^{N_P})$ and $k \coloneqq 0$\;
    \For{$k = 0, 1, 2, \ldots$}{
        \lIf{$s_k \in BR(s_k)$}{\Return{$s_{k}$}}
        \For{$p \in \mathcal{P}$}{
            Update $s^p_{k+1} \in (1 {-} \eta) s^p_{k} + \eta BR^p(s^{-p}_{k})$ \;
        }%
    }%
\end{algorithm}

After each episode, the strategy profile is updated to
\begin{equation}
    s_{k+1} = T(s_k) = (1-\eta) s_k + \eta BR(s_k),
\end{equation}
given the global update rule $T = (1-\eta)I + \eta BR$. 
Notably, the mappings $T$ and $BR$ share the same set of fixed-points: The GNEs $\Omega_{\mathcal{G}}$. 
We can then establish the following result.

\begin{lemma} \label{lem: BRD_Convergence}
    Let $BR : \mathcal{S} \rightrightarrows \mathcal{S}$ be a nonexpansive mapping. 
    Then, $s_{k+1} = T(s_k)$ converge monotonically to a GNE solution $s^{\star} \in \Omega_{\mathcal{G}}$, that is, $\textstyle\lim_{k \to \infty} \textstyle\inf_{s^{\star} \in \Omega_{\mathcal{G}}} \| T(s_k) - s^{\star} \| = 0$ given any appropriate norm $\| \cdot \|$ for $\mathcal{S}$.
\end{lemma}

This convergence result stems from fixed-point theory, where the BRD algorithm is interpreted as belonging to the class of averaged (or Krasnosel`skii-Mann) iteration methods (see \cite{Ryu2022} for a formal proof). 
Moreover, if the best-response mapping $BR$ is a contraction then so is $T$ and the BRD must converge geometrically to a GNE $s^{\star} \in \Omega_{\mathcal{G}}$, which is unique \cite{Ryu2022}:

\begin{lemma} \label{lem: BRD_Geometric_Convergence}
    Let $BR : \mathcal{S} \rightrightarrows \mathcal{S}$ be $L_{BR}$-Lipschitz, $L_{BR} < 1$. 
    Then, from any feasible $s_0 \in \mathcal{S}$, the best-response dynamics $s_{k+1} = T(s_k)$ converge to the unique GNE $s^{\star} \in \Omega_{\mathcal{G}}$ with rate
    \begin{equation} \label{eq: BRD_Geometric_Convergence}
        \cfrac{\| s_k - s^{\star} \|}{\| s_0 - s^{\star} \|} \leq \big( (1{-}\eta) + \eta L_{BR} \big)^k
    \end{equation}
    given any appropriate norm $\| \cdot \|$ for $\mathcal{S}$.
\end{lemma}

Importantly, these results might not hold in practice whenever the best-response maps, $\{ BR^p \}_{p\in\mathcal{P}}$, are only approximated (e.g., by solving Eq. \eqref{eq: Best_Response} numerically). 
However, such \textit{inexact} averaged operators are still known to converge under reasonable assumptions on the accuracy of this approximation \cite{Liang2016}. 
The learning rate $\eta$ plays a central role in the numerical stability of the BRD algorithm: A careful choice is required to ensure that strategy updates do not escape the feasible set, that is, to ensure that $T(s_k) \in \mathcal{S}_{\mathcal{G}} \cap \mathcal{S}$ for all $s_k \in \mathcal{S}_{\mathcal{G}} \cap \mathcal{S}$. 
The choice of $\eta$ can also ensure convergence in specific games in which $L_{BR} > 1$ (see the Supplementary Material). 
For non-generalised games, $T$ trivially satisfy the constraints for any $\eta \in (0,1)$ and thus a careful design of the learning rate might not be necessary. 
In such cases, $\eta \to 1$ is the optimal choice if $BR$ is a contraction and the convergence rate in Eq. \eqref{eq: BRD_Geometric_Convergence} simplifies to $L_{BR}^k$. 

Finally, we note that the stopping criteria in Algorithm \ref{alg: BRD} can be modified to allow for earlier termination. 
In this case, interrupting the best-response dynamics at some episode $k_f > 0$ will produce a strategy profile $s_{k_f} \in \mathcal{S}$ for which
\begin{equation} \label{eq: eNash_Equilibrium}
    L^p(s_{k_f}^{p},s_{k_f}^{-p}) \leq \textstyle\min_{s^p \in S^p(s_{k_f}^{-p})} L^p(s^p, s_{k_f}^{-p}) + \varepsilon
\end{equation}
holds for every player $p \in \mathcal{P}$ with an ``equilibrium gap'' $\varepsilon > 0$. 
This profile characterises an $\varepsilon$-GNE: No player can improve its cost more than $\varepsilon$ by unilaterally changing its strategy. 
The set of all $\varepsilon$-GNEs is denoted $\Omega_{\mathcal{G}}^{\varepsilon} = \{ s^{\varepsilon} \in \mathcal{S} : s^{\varepsilon} \text{ satisfies Eq. \eqref{eq: eNash_Equilibrium}} \}$.

\subsection{Infinite-horizon dynamic games} \label{sec: Preliminaries_DynGames}

A dynamic $N_P$-player game, denoted by a tuple 
\begin{equation} \label{eq: Dynamic_Game}
    \mathcal{G}_{\infty} \coloneqq (\mathcal{P}, \sigset{X}, \{ \sigset{U}^p \}_{p\in\mathcal{P}}, \sigset{W},\{ J^p \}_{p\in\mathcal{P}}),
\end{equation}
is defined by the stochastic linear dynamics 
\begin{equation} \label{eq: DynamicGame_SS}
    x_{t+1} = A x_{t} + \sum_{p\in\mathcal{P}} B^p u^p_{t} + w_t, \quad x_0 \text{ given},
\end{equation}
describing how the state of the game, $\sig{x} = (x_t)_{t\in\mathbb{N}} \in \sigset{X}$, evolves in response to the players' actions $\sig{u}^p = (u_t^p)_{t\in\mathbb{N}} \in \sigset{U}^p$ ($\forall p \in \mathcal{P}$) and the additive random noise $\sig{w} = (w_t)_{t\in\mathbb{N}} \in \sig{\mathcal{W}}$. 
For each realisation $\sig{w} \in \sigset{W}$ and initial $x_0$, the state is explicitly expressed as $\sig{x} = \sig{F_w}\sig{u}$ via the causal affine operator 
\begin{equation} \label{eq: X_AnalyticSolution}
    \sig{F_w} : \sig{u} \mapsto (I - \sig{S_{+}}\sig{A})^{-1} \Big( \textstyle\sum_{p\in\mathcal{P}} \sig{S_{+}}\sig{B}^p \sig{u}^p + \sig{S_{+}}\sig{w} + \sig{\delta}x_0 \Big),
\end{equation}
with $\sig{A} : \sig{x} \mapsto (Ax_t)_{t\in\mathbb{N}}$ and $\sig{B}^p : \sig{u}^p \mapsto (B^p u^p_t)_{t\in\mathbb{N}}$. 
Because known, the dependency on $x_0$ is omitted to simplify notation. 
Moreover, we assume $\mathrm{E} w_t = 0$ and $\mathrm{E} (w_{t+\tau} w_t^{\tran}) = \delta_{\tau}\Sigma_{w}$, given a covariance matrix $\Sigma_{w} \in \mathbb{S}_{++}^{N_x}$, for every $t,\tau \in \mathbb{N}$. 
Finally, the sets $\sigset{X}$, $\sigset{U}^p$ ($\forall p \in \mathcal{P}$), and $\sigset{W}$ define all permissible state, action, and noise sequences; they take the form
\begin{align*}
    \sigset{X}    &\coloneqq \{ \sig{x} \in \ell_{\infty}^{N_x}(\mathbb{N}) : x_t   \in \mathcal{X},   ~ t \in \mathbb{N} \}; \\
    \sigset{U}^p  &\coloneqq \{ \sig{u}^p \in \ell_{\infty}^{N_u^p}(\mathbb{N}) : u^p_t \in \mathcal{U}^p, ~ t \in \mathbb{N} \}; \\
    \sigset{W}    &\coloneqq \{ \sig{w} \in \ell_{\infty}^{N_x}(\mathbb{N}) : w_t   \in \mathcal{W},   ~ t \in \mathbb{N} \},
\end{align*}
given sets $\mathcal{X} \subseteq \mathbb{R}^{N_x}$, $\mathcal{U}^p \subseteq \mathbb{R}^{N_u^p}$ ($\forall p \in \mathcal{P}$), and $\mathcal{W} \subseteq \mathbb{R}^{N_x}$. 

In infinite-horizon games, each player chooses a plan of action $\sig{u}^p \in U^p(\sig{u}^{-p})$ to minimize its objective functional
\begin{equation} \label{eq: DynamicGame_Cost}
    J^p(\sig{u}^p, \sig{u}^{-p}) \coloneqq \mathrm{E}\left[\sum_{t=0}^{\infty} L^p(x_t, u^p_t, u^{-p}_t) \right],
\end{equation}
defined by cost function $L^p : \mathcal{X} \times \mathcal{U}^1 \times \cdots \times \mathcal{U}^{N_P} \to \mathbb{R}$. 
The mappings $U^p : \sigset{U}^{-p} \rightrightarrows \sigset{U}^p$ restrict the permissible actions for each player based on its rivals' strategies. 
Under this setup, the dynamic game $\mathcal{G}_{\infty}$ is stationary and can be interpreted as a static game on the appropriate functional spaces. 
A plan of action $\sig{u} = (\sig{u}^1, \ldots, \sig{u}^{N_P}) \in \sigset{U}$ can then be characterised as a GNE solution to $\mathcal{G}_{\infty}$ when no player can improve its objective by unilaterally deviating from this profile. 
Formally,

\begin{definition} \label{def: Nash_Equilbrium_Dynamic}
    A strategy profile $\sig{u}^{\star} = (\sig{u}^{1^{\star}}, \ldots, \sig{u}^{N_P^{\star}}) \in \sigset{U}$ is a generalized Nash equilibrium (GNE) for the game $\mathcal{G}_{\infty}$ if
    \begin{equation} \label{eq: Nash_Equilbrium_Dynamic}
        J^p(\sig{u}^{p^{\star}}, \sig{u}^{-p^{\star}}) \leq \min_{\sig{u}^p \in U^p(\sig{u}^{-p^{\star}})} J^p(\sig{u}^p, \sig{u}^{-p^{\star}})
    \end{equation}
    holds for every player $p \in \mathcal{P}$.
\end{definition}

As before, the set of GNEs that solve $\mathcal{G}_{\infty}$, 
\begin{equation*}
    \Omega_{\mathcal{G}_{\infty}} \coloneqq \{ \sig{u}^{\star} \in \sigset{U} : \sig{u}^{\star} \text{ satisfies Eq. } \eqref{eq: Nash_Equilbrium_Dynamic} \},
\end{equation*}
is not necessarily a singleton and the game is considered unsolvable if $\Omega_{\mathcal{G}_{\infty}} = \emptyset$. 
The following assumptions are taken:

\begin{assumption} \label{as: FeedbackNash_Equilbrium_Existence}
    For each player $p \in \mathcal{P}$ and noise $\sig{w} \in \sigset{W}$,
    \begin{enumerate}[a)]
        \item 
        the cost functional $J^p : \sigset{U}^1 \times \cdots \sigset{U}^{N_P} \to \mathbb{R}$ is jointly continuous in all of its arguments and convex in the $p$-th argument, $\sig{u}^p \in U^{p}(\sig{u}^{-p})$, for every sequence $\sig{u}^{-p} \in \sigset{U}^{-p}$.
        \item 
        the mapping $U^p : \sigset{U}^{-p} \rightrightarrows \sigset{U}^p$ takes the form
        \begin{equation*}
            U^p(\sig{u}^{-p}) \coloneqq \{ \sig{u}^p \in \sigset{U}^{p} : (\sig{u}^p, \sig{u}^{-p}) \in \sigset{U}_{\mathcal{G}}  \},
        \end{equation*}
        given the global constraint set  
        \begin{equation*}
            \sigset{U}_{\mathcal{G}} = \{ \sig{u} \in \ell_{\infty}^{N_u}(\mathbb{N}) : (F_w u)_t \in \mathcal{X}, ~ u_t \in \mathcal{U}_{\mathcal{G}},~ t \in \mathbb{N} \}.
        \end{equation*}
        The sets $\mathcal{U}^p$, $\mathcal{U}_{\mathcal{G}}$, and $\mathcal{X}$ are all nonempty, compact, and convex. 
        Finally, we have that $\sigset{U}_{\mathcal{G}} \cap (\sigset{U}^1 \times \cdots \times \sigset{U}^{N_P}) \neq \emptyset$.
    \end{enumerate}
\end{assumption}

These conditions are analogous to those of Assumption \ref{as: Nash_Equilbrium_Existence}: They aim to ensure the existence of GNE solutions to $\mathcal{G}_{\infty}$, that is, $\Omega_{\mathcal{G}_{\infty}} \neq \emptyset$. 
Here, the shared constraints $\sigset{U}_{\mathcal{G}}$ also require the players to ensure that state trajectories lie in a feasible set ($x \in \sigset{X}$) against all realisations of the noise.
These constraints describe operational desiderata and/or limitations in the game.

The equilibria $\sig{u}^{\star} \in \Omega_{\mathcal{G}_{\infty}}$ have an open-loop information pattern: 
Actions $u_t^{\star} = (u_t^{1^{\star}}, \ldots, u_t^{N_P^{\star}})$ depend explicitly only on initial state $x_0 \in \mathcal{X}$ and stage-index $t \in \mathbb{N}$. 
A plan of action with such representation is undesirable, as players become sensible to noise disturbances and decision errors. 
Conversely, state-feedback policies $\sig{u}^{\star} = K(\sig{x})$, for some $K : \sigset{X} \to \sigset{U}$, can detect such errors and provide corrective actions. 
A feedback (respectively, open-loop) representation of $\sig{u}^{\star} \in \Omega_{\mathcal{G}_{\infty}}$ is thus said to be strongly (weakly) time consistent \cite{Basar1998}. 
In this work, we investigate state-feedback solutions to the game $\mathcal{G}_{\infty}$.

We consider a closed-loop information pattern and assume that each $p$-th player's actions are represented as 
\begin{equation} \label{eq: Policy_CLPS}
    \sig{u}^p \coloneqq \sig{K}^p \sig{x}, \quad \sig{K}^p : \sig{x} \mapsto \sig{\Phi^p} * \sig{x},
\end{equation}
given a linear causal operator $\sig{K}^{p} \in \sigset{C}^p \subseteq \mathcal{L}(\ell_{\infty}^{N_x},\ell_{\infty}^{N_u^p})$ defined by its convolution kernel $\sig{\Phi}^p = (\Phi^p_n)_{n\in\mathbb{N}} \in \ell_1(\mathbb{N})$. 
The sets $\{ \sigset{C}^p \}_{p\in\mathcal{P}}$ describe the operators that satisfy some $\mathcal{G}_{\infty}$-related restrictions (e.g., information patterns incurred by communication, actuation, and sensing delays). 
In this setup, players do not plan their actions explicitly but rather by designing a state-feedback policy profile $\sig{K} \coloneqq (\sig{K}^1, \ldots, \sig{K}^{N_P}) \in \sigset{C}$, with $\sigset{C} =  \sigset{C}^{1} \times \cdots \times \sigset{C}^{N_P}$. 
The solution concept that naturally arises is that of a generalized feedback Nash equilibrium.

\begin{definition} \label{def: Feedback_Nash_Equilbrium}
    A policy profile $\sig{K}^{\star} = (\sig{K}^{1^{\star}}, \ldots, \sig{K}^{N_P^{\star}})$ is a generalized feedback Nash equilibrium (GFNE) for $\mathcal{G}_{\infty}$ if 
    \begin{equation} \label{eq: Feedback_Nash_Equilbrium}
        J^p(\sig{u}^{p^{\star}}, \sig{u}^{-p^{\star}}) \leq \min_{\sig{u}^p \in U^p(\sig{u}^{-p^{\star}})} J^p(\sig{u}^p, \sig{u}^{-p^{\star}}),
    \end{equation}
    where $\sig{u}^{\star} \in \mathrm{Ker}(I - \sig{K}^{\star}\hspace*{-0.1em}\sig{F_w})$, holds for every $p \in \mathcal{P}$.
\end{definition}

The set of GFNE that solve $\mathcal{G}_{\infty}$ is defined as
\begin{equation*}
    \Omega_{\mathcal{G}_{\infty}}^{\sig{K}} \coloneqq \{ \sig{K}^{\star} \in \sigset{C} : \sig{u}^{\star} = \sig{K}^{\star} \sig{x}^{\star} \text{ satisfies Eq. } \eqref{eq: Feedback_Nash_Equilbrium} \}.
\end{equation*}
We consider $\sig{K}^{\star} \in \Omega_{\mathcal{G}_{\infty}}^{\sig{K}}$ to be admissible only if it renders the game stable, that is, if the closed-loop evolution
\begin{equation*}
    \sig{x}^{\star} = \big(I - \sig{S_{+}}(\sig{A} - \textstyle\sum_{p\in\mathcal{P}} \sig{B}^p \sig{K}^{p^{\star}})\big)^{-1}\big( \sig{S_{+}}\sig{w} + \sig{\delta}x_0 \big)
\end{equation*}
is bounded ($\sig{x}^{\star} \in \ell_{\infty}^{N_x}$) for all bounded noise ($\sig{w} \in \ell_{\infty}^{N_x}$). 
A policy satisfying this requirement is said to be \textit{stabilising}. 
From Assumption \ref{as: FeedbackNash_Equilbrium_Existence}, we have that $\Omega_{\mathcal{G}_{\infty}}^{\sig{K}} \neq \emptyset$ when $\sigset{C} = \sigset{U}^{\sigset{X}}$. 
In the case of $\sigset{C} \subseteq \mathcal{L}(\ell_{\infty}^{N_x},\ell_{\infty}^{N_u})$, establishing the existence (and, especially, uniqueness) of a solution is demanding \cite{Basar2014}. 
In practice, the set $\Omega_{\mathcal{G}_{\infty}}^{\sig{K}}$ could be constructed from open-loop equilibria $\sig{u}^{\star} \in \Omega_{\mathcal{G}_{\infty}}$ by 
\textit{i}) parametrising the set of all possible trajectories $\{ \sig{x}_{\sig{w}}^{\star} = \sig{F_w} \sig{u}^{\star} \}_{\sig{w}\in\sigset{W}}$, then 
\textit{ii}) identifying policies $(\sig{K}^{1^{\star}}, \ldots, \sig{K}^{N_P^{\star}})$ that satisfy $\{\sig{u}^{p^{\star}} = \sig{K}^{p^{\star}} \sig{x}_{\sig{w}}^{\star}\}_{\sig{w}\in\sigset{W}}$, $p \in \mathcal{P}$. 
Highlighting this equivalence, we refer to such $\sig{u}^{\star}$ as an \textit{open-loop realisation} of the \textit{closed-loop policy} $\sig{K}^{\star}$, and vice-versa.

\subsubsection*{Best-response dynamics for GFNE seeking}

The mapping
\begin{equation} \label{eq: Best_Response_GFNE_Map}
    BR^p(\sig{u}^{-p}) \coloneqq \argmin_{\sig{u}^p \in U^p(\sig{u}^{-p})} J^p(\sig{u}^p, \sig{u}^{-p})
\end{equation}
is the best-response of $p \in \mathcal{P}$ to other players' plan of action. 
Under its feedback representation, $\sig{u}^{p} = \sig{K}^p \sig{x} \in BR^p(\sig{u}^{-p})$ is a solution to the infinite-horizon stochastic control problem
\begin{subequations}
    \begin{align}
         \underset{\sig{u}^p \coloneqq \sig{K}^p \sig{x}}{\text{minimize}} \quad 
            & \mathrm{E}\left[\sum_{t=0}^{\infty} L^p(x_t, u^p_t, u^{-p}_t)\right]                                  \label{eq: Best_Response_GFNE_A} \\[1ex]
        \underset{\forall t \in \mathbb{N}}{\text{subject to}} \quad
            & x_{t{+}1} = A x_{t} + \textstyle\sum_{\tilde{p}\in\mathcal{P}} B^{\tilde{p}} u^{\tilde{p}}_{t} + w_t, \label{eq: Best_Response_GFNE_B} \\[-1.5ex]
            & x_t \in \mathcal{X},~~  u^p_t \in \mathcal{U}^p,~~ (u^p_t,u^{-p}_t) \in \mathcal{U}_{\mathcal{G}},    \label{eq: Best_Response_GFNE_C} \\ 
            & \sig{K}^p \in \sigset{C}^p,                                                                           \label{eq: Best_Response_GFNE_D} \\ 
            & (x_0 \text{ given}).                                                                                  \label{eq: Best_Response_GFNE_E}
    \end{align} \label{eq: Best_Response_GFNE}%
\end{subequations}
While posed in terms of action signals ($\sig{u}^p$, $p\in \mathcal{P}$), Problem \eqref{eq: Best_Response_GFNE} should be interpreted as the direct search for a best-response policy $\sig{K}^p$ against the (fixed) plan of action from other players, $\sig{u}^{-p} \coloneqq (\sig{u}^{\tilde{p}})_{\tilde{p} \in \mathcal{P}\backslash\{p\}}$. 
We slightly abuse notation and let $BR^{p}(\sig{K}^{-p})$ be its solutions when parametrised by $\sig{u}^{-p} \coloneqq \sig{K}^{-p} \sig{x} = (\sig{K}^{\tilde{p}} \sig{x})_{\tilde{p} \in \mathcal{P}\backslash\{p\}}$. 
The mapping $BR : \sigset{C} \rightrightarrows \sigset{C}$, defined by $BR(\sig{K}) = BR^1(\sig{K}^{-1}) \times \cdots \times BR^{N_P}(\sig{K}^{-N_P})$, is the joint best-response to a strategy profile $\sig{K} \in \sigset{C}$. 
The GFNE of $\mathcal{G}_{\infty}$ thus correspond to the fixed-points of this mapping: That is, $\Omega_{\mathcal{G}_{\infty}}^{\sig{K}} = \{ \sig{K}^{\star} \in \sigset{C} : \sig{K}^{\star} \in BR(\sig{K}^{\star}) \}$. 
Due to constraints ($\mathcal{X}, \mathcal{U}^p, \mathcal{U}_{\mathcal{G}}$) and $\sigset{C}^p$, an analytical solution to Problem \eqref{eq: Best_Response_GFNE} does not exist. 
Moreover, because infinite-dimensional, its numerical approximation cannot be obtained. 

A BRD for GFNE seeking is outlined in Algorithm \ref{alg: BRD_Dynamic}. 
As $\mathcal{G}_{\infty}$ is dynamic and stationary, the procedure does not require episodic repetitions of the game. 
Instead, the learning dynamics occurs simultaneously with the game's execution: Players learn and announce their new policies at stages $t \in \{ (k{+}1) \Delta T \}_{k \in \mathbb{N}}$. 
$\sig{K}_k \coloneqq (\sig{K}_k^1, \ldots, \sig{K}_k^{N_P})$ denotes the strategy profile after $k \in \mathbb{N}$ updates. 
The period $\Delta T \geq 1$ defines the rate at which policies are updated, reflecting some communication structure (e.g., the time needed for each $p \in \mathcal{P}$ to collect $\{ \sig{K}_k^{\tilde{p}} \}_{\tilde{p}\in\mathcal{P}\backslash\{p\}}$). 

\begin{algorithm}[b!] \label{alg: BRD_Dynamic}
    \caption{BRD for GFNE seeking (BRD-GFNE)}
    \KwIn{Game $\mathcal{G}_{\infty} \coloneqq (\mathcal{P}, \sigset{X}, \{ \sigset{U}^p \}_{p\in\mathcal{P}}, \sigset{W}, \{ J^p \}_{p\in\mathcal{P}})$}
    \KwOut{GFNE $\sig{K}^{\star} = (\sig{K}^{1^{\star}}, \ldots, \sig{K}^{N_P^{\star}})$} 

    \vspace*{0.45em}
    Initialize $\sig{K}_{0} \coloneqq (\sig{K}_{0}^1, \ldots, \sig{K}_{0}^{N_P})$ and $k \coloneqq 0$\;
    \For{$t = 0, 1, 2, \ldots$}{
        {\footnotesize\tcc{Players apply actions $\{ u^p_{k,t} = K^p_k x_{k,t} \}_{p\in\mathcal{P}}$} }
        \lIf{$\sig{K}_k \in BR(\sig{K}_k)$}{\Return{$\sig{K}_{k}$}}
        \If{$t = (k{+}1)\Delta T$}{
            \For{$p \in \mathcal{P}$}{
                Update $\sig{K}^p_{k+1} \in (1 {-} \eta) \sig{K}^p_{k} + \eta BR^p(\sig{K}^{-p}_{k})$ \;
            }
            $k \coloneqq k + 1$\;
        }
    }
\end{algorithm} 

A verbal execution of Algorithm \ref{alg: BRD_Dynamic} yields the following:
\begin{itemize}
    \item 
    The players $p \in \mathcal{P}$ act on $\mathcal{G}_{\infty}$ according to the policies
    \begin{equation*}
        \sig{u}^p_k = \sig{K}_k^p \sig{x}_k, \quad k \in \mathbb{N},
    \end{equation*}
    where $\sig{u}^p_k = (u^p_t)_{t\in\mathcal{T}_k}$ and $\sig{x}_k = (x_t)_{t\in\mathcal{T}_k}$ are the signals restricted to the interval $\mathcal{T}_k = [k\Delta T, (k{+}1)\Delta T)$.
    \item At $t = (k{+}1)\Delta T$, every $p$-th player updates its policy, 
    \begin{equation*}
        \sig{K}_{k+1}^p \in (1 {-} \eta) \sig{K}^p_k + \eta BR^p(\sig{K}^{-p}_k),
    \end{equation*}
    which is then announced to the other players.
\end{itemize}

The BRD-GFNE induces an operator $T = (1-\eta) I + \eta BR$ which is equivalent to the update rule of its static counterpart. 
Thus, it possesses the same properties: The learning dynamics converge if $BR$ is nonexpansive and the convergence rate is geometric if $BR$ is also a contraction (Lemmas \ref{lem: BRD_Convergence}--\ref{lem: BRD_Geometric_Convergence}). 
These properties can also be stated in terms of stage indices $t \in \mathbb{N}$ by replacing $k = \lfloor t/\Delta T \rfloor$. 
As in the static case, a careful choice of the learning rate $\eta \in (0,1)$ is required to ensure that this fixed-point iteration is well-defined. 
Finally, we stress that the BRD-GFNE can be interrupted at any $k_f > 1$, thus producing an $\epsilon$-GFNE policy $\sig{K}_{k_f}$ with associated equilibrium gap $\epsilon > 0$.

\section{Best-response dynamics via system level synthesis} \label{sec: BRD_via_SLS}

In this section, we present an approach for GFNE seeking in (stationary) stochastic dynamic games. 
Firstly, we introduce the system level parametrisation of the players' feedback policies ($\sig{K}^p$, $p\in\mathcal{P}$) and reformulate their best-response mappings ($ BR^p$, $p\in\mathcal{P}$) through finite-dimensional robust optimisation problems. 
Then, a modified BRD-GFNE procedure is proposed and its convergence properties are investigated.

We focus on $N_P$-player linear-quadratic stochastic games $\mathcal{G}_{\infty}^{\text{LQ}} = (\mathcal{P}, \sigset{X}, \{\sigset{U}^p\}_{p\in\mathcal{P}}, \sigset{W}, \{ J^p \}_{p\in\mathcal{P}})$ with dynamics 
\begin{equation}    \label{eq: LinearDynamics}
    x_{t{+}1} = A x_t + \sum_{p\in\mathcal{P}} B^{p}u_t^{p} + w_t, \quad x_0 \text{ given},
\end{equation}
and objective functionals 
\begin{equation}    \label{eq: QuadraticCosts}
    J^p(\sig{u}^p,\sig{u}^{-p}) = 
    \mathrm{E}\Bigg[ \sum_{t=0}^{\infty} \Big( \| C^p x_t  \|_2^2 + \| \textstyle\sum_{\tilde{p}\in\mathcal{P}} D^{p\tilde{p}} u^{\tilde{p}}_t \|_2^2 \Big) \Bigg],
\end{equation}
defined by matrices $C^p \in \mathbb{R}^{N_z \times N_x}$ and $D^{p\tilde{p}} \in \mathbb{R}^{N_z \times N_u^{\tilde{p}}}$ with dimension $N_z \geq N_x + N_u$. 
The following assumptions ensure that stabilising GFNE solutions to $\mathcal{G}_{\infty}^{\text{LQ}}$ exist: 

\begin{assumption} \label{as: GFNE_LQGames_Existence}
    For each player $p\in\mathcal{P}$,
    \begin{enumerate}[a)] 
        \item 
        The pair $(A, B^p)$ is stabilisable;
        \item 
        The pair $(C^p, A)$ is detectable;
        \item 
        The matrix $D^{pp}$ is full column rank, i.e., $D^{pp^{\tran}}D^{pp} \in \mathbb{S}^{N_u^p}_{++}$. 
        Moreover, $D^{p\tilde{p}^{\tran}} C^{p} = 0 = C^{p^{\tran}}D^{p\tilde{p}}$ for all $\tilde{p} \in \mathcal{P}$.
    \end{enumerate}
    
    Finally, the sets $\mathcal{X}$, $\mathcal{U}^p$ ($\forall p$), and $\mathcal{U}_{\mathcal{G}}$, are convex polyhedra satisfying $0 \in \mathbf{relint}~\mathcal{X}$, $0 \in \mathbf{relint}~\mathcal{U}^p$, and $0 \in \mathbf{relint}~\mathcal{U}_{\mathcal{G}}$.
\end{assumption}

The class $\mathcal{G}_{\infty}^{\text{LQ}}$ describe problems in which $N_P$ noncooperative agents have to agree on stationary policies that jointly stabilise a global system, robustly to the noise process, while penalising state- and input-deviations differently. 
While representative of many practically relevant problems, this choice is not restrictive. 
Our derivations should follow similarly for any collection of cost functions $\{ L^p \}_{p\in\mathcal{P}}$ satisfying Assumption \ref{as: FeedbackNash_Equilbrium_Existence}.

\subsection{System-level best-response mappings} \label{sec: BRD_via_SLS_Parametrisation}

System level synthesis (SLS, \cite{Anderson2019}) is a novel methodology for controller design centred on the equivalent representation of control policies in terms of the closed-loop responses that they achieve.
Unlike similar approaches, such as the Youla \cite{Rotkowitz2005} and input-output (IOP, \cite{Furieri2019}) parametrisations, SLS allows the synthesis of state-feedback policies to be posed as the solution to convex optimisation problems, even when subjected to constraints on the state- and input-signals, and on the structure of the policy itself.
In this section, we present a system-level parametrisation for the best-response mappings in $\mathcal{G}_{\infty}^{\text{LQ}}$.

We start by assuming a stabilising profile $(\sig{K}^1, \ldots, \sig{K}^{N_P})$, guaranteed by Assumption \ref{as: GFNE_LQGames_Existence}. 
Each policy is associated with a transfer matrix $\sig{\hat K}^p \in \mathcal{RH}_{\infty}$, $\sig{\hat K}^p = \sum_{n=0}^{\infty} \frac{1}{z^n} \Phi_n^p$, which defines the state-feedback $\sig{\hat u}^p = \sig{\hat K}^p\sig{\hat x}$ in the frequency domain. 
Considering the linear dynamics Eq. \eqref{eq: LinearDynamics},
\begin{subequations}
    \begin{align} 
        z \sig{\hat x} &= A \sig{\hat x} + \textstyle\sum_{p\in\mathcal{P}} B^{p} \sig{\hat u}^p + \sig{\hat w}; \\
        \sig{\hat u}^p &= \sig{\hat K}^p \sig{\hat x}, \quad (\forall p \in \mathcal{P}),
    \end{align} \label{eq: SLS_Centralized_StateSpace}%
\end{subequations}
the signals $(\sig{\hat x},\sig{\hat u}^1,\ldots,\sig{\hat u}^{N_P})$ can be expressed in terms of $\sig{\hat w}$,
\begin{align} 
    \begin{bmatrix}
        \sig{\hat x} \\ \sig{\hat u}^1 \\ \vdots \\ \sig{\hat u}^{N_P}
    \end{bmatrix} 
        &= \begin{bmatrix}
            \sig{\hat \Phi_x} \\ 
            \sig{\hat \Phi_u}^1 \\ 
            \vdots \\ 
            \sig{\hat \Phi_u}^{N_P}
        \end{bmatrix}\sig{\hat w}, \label{eq: SLS_Parametrization_StateSpace}
\end{align}
where $\sig{\hat \Phi}_x = (zI - A - \textstyle\sum_{p\in\mathcal{P}} B^p \sig{\hat K}^p)^{-1}$ and $\sig{\hat \Phi}_u^p = \sig{\hat K}^p\sig{\hat \Phi}_x$ ($p \in \mathcal{P}$). 
The introduced transfer matrices ($\sig{\hat \Phi_x},\sig{\hat \Phi_u}^1,\ldots,\sig{\hat \Phi_u}^{N_P}$) are referred to as \textit{system level responses} or \textit{closed-loop maps}. 
Under this representation, the following result holds.

\begin{theorem}[System level parametrisation]  \label{thm: SystemLevelSynthesis}
    Consider the dynamics Eq. \eqref{eq: SLS_Centralized_StateSpace} under state-feedback $\sig{\hat u}^p = \sig{\hat K}^p\sig{\hat x}$ ($\forall p \in \mathcal{P}$). 
    The following statements are true:
    \begin{enumerate}[a)]
        \item 
        The affine space 
        \begin{equation} \label{eq: SLP_AffineSpace}
            \begin{bmatrix}
                zI - A & {-}B^1 ~\cdots~ {-}B^{N_P}
            \end{bmatrix}  \begin{bmatrix}
                \sig{\hat \Phi_x} \\ \sig{\hat \Phi_u}^1 \\ \vdots \\ \sig{\hat \Phi_u}^{N_P}
            \end{bmatrix} = I,
        \end{equation}
        with $\sig{\hat \Phi_x},\sig{\hat \Phi_u}^1,\ldots,\sig{\hat \Phi_u}^{N_P} \in \frac{1}{z}\mathcal{RH}_{\infty}$,
        parametrizes all system responses from $\sig{\hat w}$ to $(\sig{\hat x},\sig{\hat u}^1,\ldots,\sig{\hat u}^{N_P})$ achievable by internally stabilising policies $(\sig{\hat K}^1,\ldots,\sig{\hat K}^{N_P})$. 
        \item 
        Any response $(\sig{\hat \Phi_x}, \sig{\hat \Phi_u}^1, \ldots, \sig{\hat \Phi_u}^{N_P})$ satisfying Eq. \eqref{eq: SLP_AffineSpace} is achieved by the policies $\sig{\hat K}^p = \sig{\hat \Phi_u}^p \sig{\hat \Phi_x}^{-1}$ ($\forall p \in \mathcal{P}$), which are internally stabilising and can be implemented as
        \begin{subequations}
        \begin{align}
            z\sig{\hat \xi} &= \sig{\tilde \Phi_x}\sig{\hat \xi} + \sig{\hat x};    \label{eq: SLP_ControlImplementation_A} \\
            \sig{\hat u}^p  &= \sig{\tilde \Phi_u}^p \sig{\hat \xi},                \label{eq: SLP_ControlImplementation_B}
        \end{align} \label{eq: SLP_ControlImplementation}%
        \end{subequations}
        with $\sig{\tilde \Phi_x} = z(I - \sig{\hat \Phi_x})$ and $\sig{\tilde \Phi_u}^p = z\sig{\hat \Phi_u}^p$ (see Figure \ref{fig: SLP_ControlDiagram}).
    \end{enumerate}
\end{theorem}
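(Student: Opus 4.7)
The plan is to prove the two claims by $z$-domain manipulations, with part b) reducing to a direct algebraic substitution once part a) is in place.

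For part a), I would argue both implications separately. The necessity direction is essentially a substitution: starting from~\eqref{eq: SLS_Centralized_StateSpace} with $\sig{\hat u}^p = \sig{\hat K}^p \sig{\hat x}$, I would solve for $(\sig{\hat x}, \sig{\hat u}^1, \ldots, \sig{\hat u}^{N_P})$ as rational functions of $\sig{\hat w}$, identify the induced responses with $(\sig{\hat \Phi_x}, \sig{\hat \Phi_u}^p)$, and verify that~\eqref{eq: SLP_AffineSpace} collapses to the tautology $(zI - A - \sum_p B^p \sig{\hat K}^p)\sig{\hat \Phi_x} = I$. Membership in $\frac{1}{z}\mathcal{RH}_{\infty}$ then follows because the stabilising assumption renders $A + \sum_p B^p K^p$ Schur, so each induced transfer function admits a strictly proper stable realisation. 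The converse direction starts from a tuple $(\sig{\hat \Phi_x}, \sig{\hat \Phi_u}^p) \in \frac{1}{z}\mathcal{RH}_{\infty}$ satisfying~\eqref{eq: SLP_AffineSpace}, defines $\sig{\hat K}^p = \sig{\hat \Phi_u}^p \sig{\hat \Phi_x}^{-1}$, and then right-multiplies~\eqref{eq: SLP_AffineSpace} by $\sig{\hat \Phi_x}^{-1}$ to obtain $(zI - A - \sum_p B^p \sig{\hat K}^p) = \sig{\hat \Phi_x}^{-1}$, which shows that the closed-loop maps induced by these policies recover $(\sig{\hat \Phi_x}, \sig{\hat \Phi_u}^p)$ exactly.

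The main obstacle is the internal stability claim in this converse: $\sig{\hat \Phi_x}^{-1}$ may itself be improper, and hence $\sig{\hat K}^p$ is not guaranteed to be stable as a standalone transfer matrix. I would resolve this in the standard SLS manner, by examining the full four-block transfer matrix from exogenous disturbances (injected at both the plant input and the controller output) to $(\sig{\hat x}, \sig{\hat u})$ and checking that each block reduces to an affine combination of the given $\sig{\hat \Phi_x}$ and $\sig{\hat \Phi_u}^p$ alone. Because each such block lies in $\mathcal{RH}_{\infty}$ by assumption, the interconnection is internally stable even though individual components may not be.

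For part b), I would substitute $\sig{\tilde \Phi_x} = z(I - \sig{\hat \Phi_x})$ and $\sig{\tilde \Phi_u}^p = z\sig{\hat \Phi_u}^p$ into~\eqref{eq: SLP_ControlImplementation} and verify by direct calculation that the proposed realisation implements $\sig{\hat u}^p = \sig{\hat K}^p \sig{\hat x}$. From~\eqref{eq: SLP_ControlImplementation_A}, $(zI - \sig{\tilde \Phi_x})\sig{\hat \xi} = (z\sig{\hat \Phi_x})\sig{\hat \xi} = \sig{\hat x}$, yielding $\sig{\hat \xi} = z^{-1}\sig{\hat \Phi_x}^{-1}\sig{\hat x}$; then~\eqref{eq: SLP_ControlImplementation_B} produces $\sig{\hat u}^p = \sig{\hat \Phi_u}^p\sig{\hat \Phi_x}^{-1}\sig{\hat x} = \sig{\hat K}^p\sig{\hat x}$. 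Stability of the realisation is then automatic, because its defining operators $\sig{\tilde \Phi_x}, \sig{\tilde \Phi_u}^p$ are built from $\sig{\hat \Phi_x}, \sig{\hat \Phi_u}^p$ and a single forward shift, so implementing the controller never requires explicit inversion of $\sig{\hat \Phi_x}$ --- which is exactly the practical advantage of~\eqref{eq: SLP_ControlImplementation} over the direct formula $\sig{\hat K}^p = \sig{\hat \Phi_u}^p\sig{\hat \Phi_x}^{-1}$.
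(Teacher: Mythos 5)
Your proposal is correct and follows essentially the same route as the paper: direct $z$-domain substitution for the achievability direction, the identity $(zI - A - \sum_p B^p\sig{\hat K}^p) = \sig{\hat \Phi_x}^{-1}$ for the converse, and internal stability via the block transfer matrix from injected disturbances to $(\sig{\hat x},\sig{\hat u},\sig{\hat \xi})$ being affine in the stable maps $\sig{\hat \Phi_x},\sig{\hat \Phi_u}^p$. The only detail worth adding explicitly is why $\sig{\hat \Phi_x}^{-1}$ exists at all, which the paper gets from Eq.~\eqref{eq: SLP_AffineSpace} forcing the leading spectral component of $\sig{\hat \Phi_x}$ to be $I_{N_x}$.
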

\begin{proof}
    Defining $B \coloneqq [B^1\ B^2\ \cdots\ B^{N_P}]$, and transfer matrices $\sig{\hat \Phi_{u}} \coloneqq \texttt{col}(\sig{\hat \Phi_u}^{1}, \ldots, \sig{\hat \Phi_u}^{N_P})$ and $\sig{\hat K} \coloneqq \texttt{col}(\sig{\hat K}^{1}, \ldots, \sig{\hat K}^{N_P})$, the proof is as in \cite[Theorem~4.1]{Anderson2019}. 
    We refer to the Supplementary Material for the full details. 
    In the second statement, we consider an alternative representation $\sig{\hat K} = \sig{\tilde \Phi_u}(zI - \sig{\tilde \Phi_x})^{-1} \sig{\tilde \Phi_y}$ with $\sig{\tilde \Phi_x} = z(I - \sig{\hat \Phi_x})$, $\sig{\tilde \Phi_u} = z\sig{\hat \Phi_u}$, and $\sig{\tilde \Phi_y} = I$: 
    This leads to the transfer matrices from $(\sig{\hat \delta_x}, \sig{\hat \delta_u}, \sig{\hat \delta_{\xi}})$ to $(\sig{\hat x},\sig{\hat u},\sig{\hat \xi})$,
    \begin{equation} \label{eq: SLP_GangOfNine}
        \begin{bmatrix}
            \sig{\hat x} \\ 
            \sig{\hat u} \\ 
            \sig{\hat \xi}
        \end{bmatrix} 
        =
        \begin{bmatrix}
            \sig{\hat \Phi_x}   & \sig{\hat \Phi_x}B     & \sig{\hat \Phi_x}(zI - A) \\
            \sig{\hat \Phi_u}   & I + \sig{\hat \Phi_u}B & \sig{\hat \Phi_u}(zI - A) \\
            \frac{1}{z} I       & \frac{1}{z} B          & \frac{1}{z}(zI - A)       \\
        \end{bmatrix} 
        \begin{bmatrix}
            \sig{\hat \delta_x} \\ 
            \sig{\hat \delta_u} \\ 
            \sig{\hat \delta_{\xi}}
        \end{bmatrix},
    \end{equation}
    which are all stable due to $\sig{\hat \Phi_x},\sig{\hat \Phi_u} \in \frac{1}{z}\mathcal{RH}_{\infty}$. 
    Thus, the policy $\sig{\hat K} = (\sig{\hat K}^1, \sig{\hat K}^2, \ldots, \sig{\hat K}^{N_P})$ is internally stabilising.%
\end{proof}

\begin{figure}[tb!] \centering
    \includegraphics[width=0.95\columnwidth]{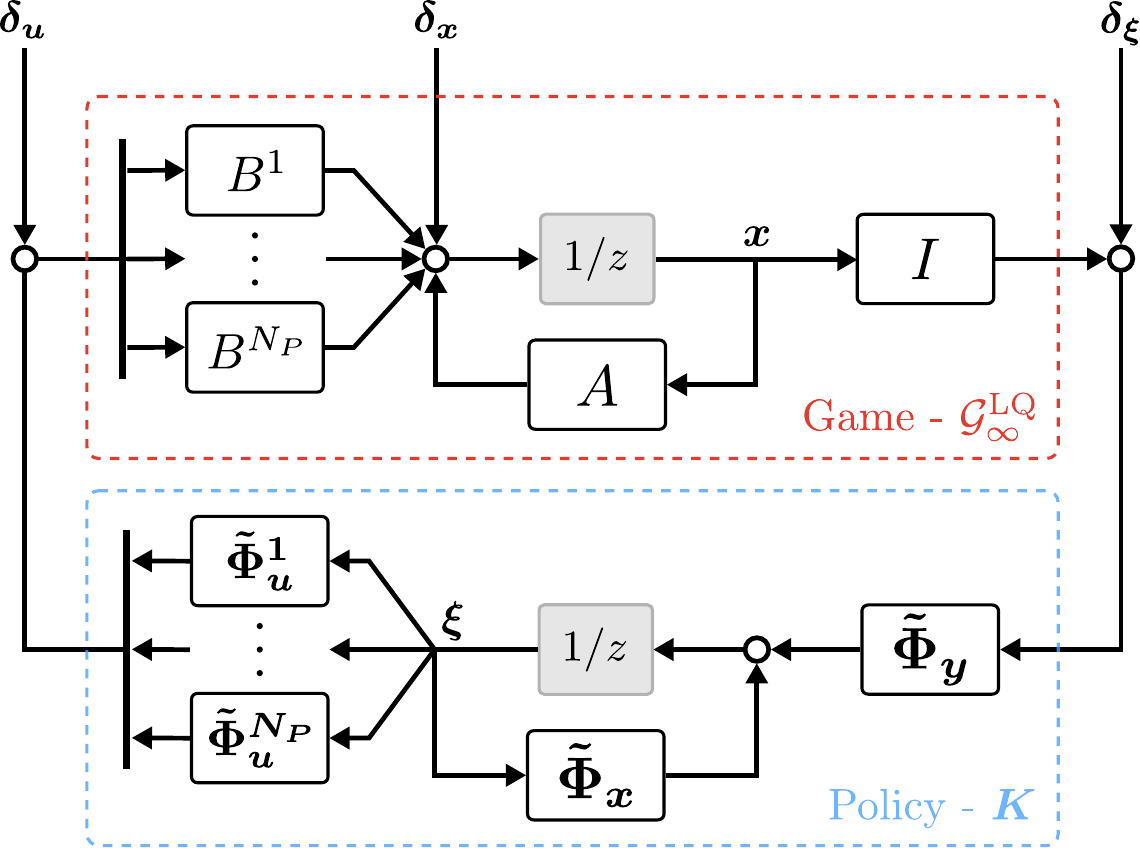}
    
    \caption{Feedback structure for the policy $\sig{\hat K} = \sig{\tilde \Phi_u}(zI - \sig{\tilde \Phi_x})^{-1}\sig{\tilde \Phi_y} = \sig{\hat \Phi_{u}^p} \sig{\hat \Phi_x^{-1}}$, equivalent to the internal representation in Eq. \eqref{eq: SLP_ControlImplementation}.}
    \label{fig: SLP_ControlDiagram}
\end{figure}

We refer to the system level responses through their kernels, $\sig{\Phi_x} = (\Phi_{x,n})_{n\in\mathbb{N}} \in \ell_{2}(\mathbb{N})$ and $\sig{\Phi_u}^p = (\Phi^p_{u,n})_{n\in\mathbb{N}} \in \ell_{2}(\mathbb{N})$, for all $p \in \mathcal{P}$. 
Due to strict causality, $\Phi_{x,0} = 0$ and $\Phi_{u,0}^p = 0$. 
From Theorem \ref{thm: SystemLevelSynthesis}, the operators $\{ \sig{K}^p \in \sigset{C}^p \}_{p\in\mathcal{P}}$ and the transfer matrices $\{ \sig{\hat K}^p \in \mathcal{RH}_{\infty} \}_{p\in\mathcal{P}}$ are equivalent representations of the feedback policies. 
Hence, provided there is no confusion, we use exclusively the first notation. 
In particular, $\sig{K}^p = \sig{\Phi_u}^p \sig{\Phi_x}^{-1}$ ($p\in\mathcal{P}$) denotes the policy parametrised by $(\sig{\hat \Phi_x},\sig{\hat \Phi_u}^p)$ and $\sig{K} = (\sig{\Phi_u}^1, \cdots, \sig{\Phi_u}^{N_P}) \sig{\Phi_x}^{-1}$ denotes the corresponding profile. 
A time-domain characterisation of $\sig{K}$ is given in the following.

\begin{corollary} \label{cor: ControlImplementation}
    A policy $\sig{K}^p = \sig{\Phi_u}^p\sig{\Phi_x}^{-1}$ ($p \in \mathcal{P}$) is defined by the kernel $\sig{\Phi}^p = \sig{\Phi_u}^p * \sig{\Phi_x}^{-1}$, and can be implemented as
    \begin{subequations}
        \begin{align}
            \xi_{t} &=      -     \textstyle\sum_{\tau=1}^{t} \Phi_{x,\tau{+}1}   \xi_{t-\tau} + x_t; \\ 
            u^p_{t} &= \phantom{-}\textstyle\sum_{\tau=0}^{t} \Phi^p_{u,\tau{+}1} \xi_{t-\tau},
        \end{align} \label{eq: SLP_ControlImplementation_Time}
    \end{subequations}
    using an auxiliary \emph{internal state} $\xi = (\xi_n)_{n\in\mathbb{N}}$ with $\xi_0 = x_0$.
\end{corollary}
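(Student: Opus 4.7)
The plan is to obtain Corollary 1 as the time-domain realisation of the implementation formulas established in Theorem 1, item (b). Concretely, we start from the frequency-domain recursion $z\sig{\hat{\xi}} = \sig{\tilde{\Phi}_x}\sig{\hat{\xi}} + \sig{\hat{x}}$ and $\sig{\hat{u}}^p = \sig{\tilde{\Phi}_u}^p\sig{\hat{\xi}}$, with $\sig{\tilde{\Phi}_x} = z(I - \sig{\hat{\Phi}_x})$ and $\sig{\tilde{\Phi}_u}^p = z\sig{\hat{\Phi}_u}^p$, and pass to the time domain by inverse $\mathcal{Z}$-transform. The only analytic fact required beyond Theorem 1 is the identification of the leading spectral components: Because $\sig{\hat{\Phi}_x},\sig{\hat{\Phi}_u}^p \in \frac{1}{z}\mathcal{RH}_\infty$, both admit the expansions $\sig{\hat{\Phi}_x} = \sum_{n=1}^{\infty} z^{-n}\Phi_{x,n}$ and $\sig{\hat{\Phi}_u}^p = \sum_{n=1}^{\infty} z^{-n}\Phi^p_{u,n}$, and the affine constraint \eqref{eq: SLP_AffineSpace} forces $\Phi_{x,1} = I_{N_x}$, as already noted in the proof of Theorem 1.

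With these expansions, the key algebraic step is to rewrite $\sig{\tilde{\Phi}_x}$. A direct computation gives $z\sig{\hat{\Phi}_x} = \Phi_{x,1} + \sum_{n=1}^{\infty}z^{-n}\Phi_{x,n+1} = I + \sum_{n=1}^{\infty}z^{-n}\Phi_{x,n+1}$, so $\sig{\tilde{\Phi}_x} = zI - I - \sum_{n=1}^{\infty}z^{-n}\Phi_{x,n+1}$. Substituting into $z\sig{\hat{\xi}} = \sig{\tilde{\Phi}_x}\sig{\hat{\xi}} + \sig{\hat{x}}$, the $zI\sig{\hat{\xi}}$ terms on each side cancel, leaving $\sig{\hat{\xi}} = \sig{\hat{x}} - \sum_{n=1}^{\infty}z^{-n}\Phi_{x,n+1}\sig{\hat{\xi}}$, whose inverse $\mathcal{Z}$-transform (using $z^{-n}$ as an $n$-step delay and the fact that $\sig{\xi}$ is causal with $\xi_t = 0$ for $t<0$) is exactly the first recursion in \eqref{eq: SLP_ControlImplementation_Time}. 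An identical manipulation yields $\sig{\tilde{\Phi}_u}^p = \sum_{n=0}^{\infty}z^{-n}\Phi^p_{u,n+1}$, and inverting gives the formula for $u^p_t$. Evaluating the $\xi$-recursion at $t=0$ produces an empty sum, so $\xi_0 = x_0$, which supplies the stated initial condition.

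The claim $\sig{\Phi}^p = \sig{\Phi_u}^p * \sig{\Phi_x}^{-1}$ follows because Theorem 1(b) identifies the policy with the operator product $\sig{\hat{K}}^p = \sig{\hat{\Phi}_u}^p\sig{\hat{\Phi}_x}^{-1}$, and the inverse $\mathcal{Z}$-transform sends products of transfer matrices to convolutions of their kernels; invertibility of $\sig{\hat{\Phi}_x}$ as an element of $\mathcal{RH}_\infty$ on the relevant domain (equivalently, of the convolution operator $\sig{\Phi_x}$ on causal sequences) is guaranteed precisely by $\Phi_{x,1} = I_{N_x}$, so the kernel $\sig{\Phi_x}^{-1}$ is well-defined. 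I do not anticipate a substantive obstacle: the proof is essentially bookkeeping around the $z$-multiplication that shifts the indexing of the kernels by one. The only delicate point is being consistent with the convention that the kernels are strictly causal (starting at $n=1$), which is what makes the $zI$ cancellation in $\sig{\tilde{\Phi}_x}\sig{\hat{\xi}} - z\sig{\hat{\xi}}$ leave behind a well-posed causal recursion for $\xi_t$ rather than an equation that requires knowledge of $\xi_{t+1}$.
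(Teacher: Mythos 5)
Your proposal is correct and follows essentially the same route as the paper: both arguments read off the kernel identity from the inverse $\mathcal{Z}$-transform of $\sig{\hat K}^p = \sig{\hat \Phi_u}^p \sig{\hat \Phi_x}^{-1}$ and obtain the recursions by inverting the implementation \eqref{eq: SLP_ControlImplementation} from Theorem \ref{thm: SystemLevelSynthesis}(b), using strict causality and $\Phi_{x,1}=I_{N_x}$ to handle the index shift induced by the factor $z$. The only cosmetic difference is that you cancel the $zI\sig{\hat\xi}$ terms in the frequency domain before inverting, whereas the paper inverts first and cancels $\xi_{t+1}$ in the time domain; the computation is the same.
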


The system level parametrisation enables a methodology for policy synthesis consisting of searching the space of stabilising policies (in)directly through $(\sig{\Phi_x}, \sig{\Phi_u}^p)$, $p \in \mathcal{P}$. 
In particular, this parametrisation can be leveraged to reformulate the best-response mappings in $\mathcal{G}_{\infty}^{\text{LQ}}$ as numerically tractable problems. 
In this direction, consider that players design stabilising policies $\sig{K} = (\sig{K}^1, \ldots, \sig{K}^{N_P})$ by choosing their desired system level responses $\sig{\Phi_u} = (\sig{\Phi_u}^1, \ldots, \sig{\Phi_u}^{N_P})$, simultaneously. 
From the affine space Eq. \eqref{eq: SLP_AffineSpace}, the signal $\sig{\Phi_x}$, common to all players, satisfies the (deterministic) linear dynamics 
\begin{equation}
    \Phi_{x,n+1} = A \Phi_{x,n} + \sum_{p\in\mathcal{P}} B^p \Phi_{u,n}^p, \quad \Phi_{x,1} = I_{N_x},
\end{equation}
or $\sig{\Phi_x} = \sig{F_{\Phi}}\sig{\Phi_u}$ given the causal affine operator 
\begin{equation} \label{eq: PhiX_AnalyticSolution}
    \sig{F_{\Phi}} : \sig{\Phi_u} \mapsto (I - \sig{S_{+}}\sig{A})^{-1} \Big( \textstyle\sum_{p\in\mathcal{P}} \sig{S_{+}}\sig{B}^p \sig{\Phi_u}^p + \sig{\delta}I_{N_x} \Big).
\end{equation}
Using the system level responses, the objective functionals of $\mathcal{G}_{\infty}^{\text{LQ}}$ can be shown as equivalent to the functional
\begin{multline*}
    J^p(\sig{\Phi_u}^p, \sig{\Phi_u}^{-p}) \\
         = \sum_{n=1}^{\infty} \Big( \| C^p \Phi_{x,n} \Sigma_{w}^{1 \over 2} \|_F^2 + \| \textstyle\sum_{\tilde{p}\in\mathcal{P}} D^{p\tilde{p}} \Phi_{u,n}^{\tilde{p}} \Sigma_{w}^{1 \over 2} \|_F^2 \Big).
\end{multline*}
The game $\mathcal{G}_{\infty}^{\text{LQ}}$ thus induces a \textit{system-level} dynamic game,
\begin{equation} \label{eq: SLS_Dynamic_Game}
    \mathcal{G}_{\infty}^{\Phi} \coloneqq (\mathcal{P}, \sigset{C}_x, \{ \sigset{C}_u^p \}_{p\in\mathcal{P}}, \sigset{W}, \{ J^p \}_{p\in\mathcal{P}}),
\end{equation}
defining the problem in which players $p \in \mathcal{P}$ each plans a closed-loop response $\sig{\Phi_u}^p \in U_{\Phi}^p(\sig{\Phi_u}^{-p}) \subseteq \sigset{C}_u^p$ to minimise its individual cost functional $J^p : \sigset{C}_u^1 \times \cdots \times \sigset{C}_u^{N_P} \to \mathbb{R}$. 
Here, the set-valued mappings $U_{\Phi}^p : \sigset{C}_u^{-p} \rightrightarrows \sigset{C}_u^p$ are defined as
\begin{multline*}
    U_{\Phi}^p(\sig{\Phi_u}^{-p}) \coloneqq \{ \sig{\Phi_u}^p \in \sigset{C}_u^p : \sig{F_{\Phi}}\sig{\Phi_u} \in \sigset{C}_x, \\ 
        \quad \sig{\Phi_u}^p * \sig{w} \in U^p(\sig{\Phi_u}^{-p} * \sig{w}) \},
\end{multline*}
which incorporate the constraints $U^p$ from the original $\mathcal{G}_{\infty}^{\text{LQ}}$. The sets $(\sigset{C}_x, \sigset{C}_u^p)$ are designed to enforce the policy constraints $\sig{K}^p \in \sigset{C}^p$ directly through the kernels $(\sig{\Phi_x}, \sig{\Phi_u}^p)$: 
They are related as $\sigset{C}_u^p = \{ \sig{K}^p\sigset{C}_x : \sig{K}^p \in \sigset{C}^p \}$. 
We refer to a joint response $\sig{\Phi}_u = (\sig{\Phi_u}^1, \ldots, \sig{\Phi_u}^{N_P}) \in \sigset{C}_u$, $\sigset{C}_u = \sigset{C}_u^1 \times \cdots \times \sigset{C}_u^{N_P}$, as a system-level strategy profile. 
Finally, the set of (open-loop) system-level GNEs for this game is denoted as $\Omega_{\mathcal{G}_{\infty}^{\Phi}}$. 

The best-response mappings for $\mathcal{G}_{\infty}^{\Phi}$ take the form
\begin{equation*}
    BR_{\Phi}^p(\sig{\Phi_u}^{-p}) \coloneqq \argmin_{\sig{\Phi_u}^p \in U_{\Phi}^p(\sig{\Phi_u}^{-p})} J^p(\sig{\Phi_u}^p, \sig{\Phi_u}^{-p}),
\end{equation*}
consisting of the set of closed-loop maps $\sig{\Phi_u}^p$ which are best-responses to the maps of other players, $\sig{\Phi_u}^{-p} = (\sig{\Phi_u}^{\tilde{p}})_{\tilde{p} \in \mathcal{P} \backslash \{p\}}$. 
They are solutions to the system level synthesis problem
\begin{subequations}
\begin{align}
    \underset{\sig{\Phi_u}^p}{\text{minimize}} \quad
         & \sum_{n=1}^{\infty} \Big( \| C^p \Phi_{x,n} \Sigma_{w}^{1 \over 2} \|_F^2 + \Big\| \displaystyle\sum_{\tilde{p}\in\mathcal{P}} D^{p\tilde{p}} \Phi_{u,n}^{\tilde{p}} \Sigma_{w}^{1 \over 2} \Big\|_F^2 \Big)  \label{eq: Best_Response_GFNE_SLS_A}\\
    \underset{\forall n \in \mathbb{N}^+}{\text{subject to} } \quad
        & \Phi_{x,{n+1}} = A \Phi_{x,n} + \textstyle\sum_{\tilde{p}\in\mathcal{P}} B^{\tilde{p}} \Phi^{\tilde{p}}_{u,n}, \label{eq: Best_Response_GFNE_SLS_B}\\[-1.5ex]
        & \begin{multlined}
            (\Phi_x {*} w)_n \in \mathcal{X}, ~~ (\Phi_u^p {*} w)_n \in \mathcal{U}^p, \\[-0.5ex]
                \hspace*{3.2em} \big( (\Phi_u^p {*} w)_n, (\Phi_u^{-p} {*} w)_n \big)  \in \mathcal{U}_{\mathcal{G}},
        \end{multlined} \label{eq: Best_Response_GFNE_SLS_C}\\
        & \sig{\Phi_x} \in \sigset{C}_x, \quad \sig{\Phi_u}^p \in \sigset{C}_u^p,   \label{eq: Best_Response_GFNE_SLS_D}\\
        & \Phi_{x,1} = I_{N_x}.                                         \label{eq: Best_Response_GFNE_SLS_E}
\end{align} \label{eq: Best_Response_GFNE_SLS}%
\end{subequations}%
We refer to the Supplementary Material for a detailed derivation of Problem \eqref{eq: Best_Response_GFNE_SLS} from Problem \eqref{eq: Best_Response_GFNE}.
Collectively, the mapping $BR_{\Phi}(\sig{\Phi_u}) = BR^1_{\Phi}(\sig{\Phi_u}^{-1}) \times \cdots \times BR^{N_P}_{\Phi}(\sig{\Phi_u}^{-N_P})$, is the joint best-response to a system-level strategy profile $\sig{\Phi_u}$. 
The GNEs of $\mathcal{G}_{\infty}^{\Phi}$ are equivalent to the fixed-points of this map, $\Omega_{\mathcal{G}_{\infty}^{\Phi}} = \{ \sig{\Phi_u}^{\star} \in \sigset{C}_u : \sig{\Phi_u}^{\star} \in BR_{\Phi}(\sig{\Phi_u}^{\star}) \}$. 
Considering how $\mathcal{G}_{\infty}^{\text{LQ}}$ induces $\mathcal{G}_{\infty}^{\Phi}$, a relationship can be established between the best-responses $BR$ and $BR_{\Phi}$ and, consequently, between their fixed-points, $\Omega_{\mathcal{G}_{\infty}^{\text{LQ}}}^{\sig{K}}$ and $\Omega_{\mathcal{G}_{\infty}^{\Phi}}$. 
In Section \ref{sec: BRD_via_SLS_Method}, we formalise this relationship and propose a learning dynamics for GFNE seeking based on the system-level best-response mappings.

The best-responses $\{ BR_{\Phi}^p \}_{p\in\mathcal{P}}$ are still intractable: 
\textit{i)} They are defined by infinite-dimensional problems with no general solution and 
\textit{ii)} that require full knowledge of the noise process $(w_n)_{n\in\mathbb{N}}$ to formulate the constraints $U_{\Phi}^p$. 
In the following, we tackle both issues and provide a class of finite-dimensional robust optimisation problems that approximate Problem \eqref{eq: Best_Response_GFNE_SLS}. 
We conclude the section by presenting a class of system level constraints which enforce a richer feedback information pattern.

\subsubsection*{Finite-horizon approximation} 

The programs in $\{ BR_{\Phi}^p \}_{p\in\mathcal{P}}$ can be made finite-dimensional by restricting the system level responses to the set of finite-impulse responses (FIR),
\begin{align*}
    \sigset{C}_x   &= \{ \sig{\Phi_x}   \in \ell_2[0, N] :  \Phi_{x,n}   \in \mathcal{C}_{x,n},   ~ n {\in} [0, N), ~ \Phi_{x,N} = 0 \}; \\
    \sigset{C}_u^p &= \{ \sig{\Phi_u}^p \in \ell_2[0, N) :  \Phi_{u,n}^p \in \mathcal{C}^p_{u,n}, ~ n {\in} [0, N) \},
\end{align*}
given horizon $N \in (1,\infty)$. 
We enforce $\sig{\Phi_x} \in \sigset{C}_x$ and $\sig{\Phi_u}^p \in \sigset{C}_u^p$ in Problem \eqref{eq: Best_Response_GFNE_SLS} by adding a terminal constraint $\Phi_{x,N} = 0$ then letting $\sig{\Phi_x} = (\Phi_{x,n} \in \mathcal{C}_{x,n})_{n=1}^{N}$ and $\sig{\Phi_u} = (\Phi_{u,n}^p \in \mathcal{C}_{u,n}^p)_{n=1}^{N-1}$. 
The sets $\mathcal{C}_{x,n} \subseteq \mathbb{R}^{N_x \times N_x}$ and $\mathcal{C}_{u,n}^p \subseteq \mathbb{R}^{N_u^p \times N_x}$ are required only to be compact convex sets. 
Under this condition, Problem \eqref{eq: Best_Response_GFNE_SLS} is finite-dimensional with $(N{-}1)N_x N_u^p$ decision variables (the entries of $\Phi_{u,1}^p, \ldots, \Phi_{u,N{-}1}^p  \in \mathbb{R}^{N_u^p \times N_x}$) and thus can be solved numerically. 
We remark that the policies $\sig{K}^p = \sig{\Phi_u}^p \sig{\Phi_x}^{-1}$ ($p\in\mathcal{P}$) are still solutions to the infinite-horizon Problem \eqref{eq: Best_Response_GFNE} regardless of the closed-loop maps $\{ \sig{\Phi_x}, \sig{\Phi_u}^p \}$ being FIR.

Although realising Problem \eqref{eq: Best_Response_GFNE_SLS} into a tractable program, the constraint $\Phi_{x,N} = 0$ is only feasible when the pair $(A,B^p)$ is full-state controllable. 
This is a difficult requirement in multi-agent settings, as often $N_u^p \ll N_x$ for all $p\in\mathcal{P}$, leading to overdetermined problems. 
Furthermore, enforcing FIR constraints is known to result in \textit{deadbeat} policies: 
Control actions are excessively large in magnitude for small $N < \infty$.
Alternatively, we restrict the system level responses to the sets
\begin{align*}
    \sigset{C}_x   &= \{ \sig{\Phi_x}   {\in} \ell_2[0, N] :  \Phi_{x,n}   {\in} \mathcal{C}_{x,n},   ~ n {\in} [0, N), ~ \| \Phi_{x,N} \|_F^2 \le \gamma \}; \\[0.5ex]
    \sigset{C}_u^p &= \{ \sig{\Phi_u}^p {\in} \ell_2[0, N) :  \Phi_{u,n}^p {\in} \mathcal{C}^p_{u,n}, ~ n {\in} [0, N) \},
\end{align*}
with $\| \Phi_{x,N} \|_F^2 = \sum_{i} \sigma_{i}(\Phi_{x,N})^2 \leq \gamma$ for some factor $\gamma \in (0,1)$ and $\sigma_{i}(\cdot)$ denoting the $i$-th largest singular value of a matrix. 
These are denoted as the set of (soft) FIR for $N > 0$. 
The $p$-th player's best-response map thus corresponds to the problem
\begin{subequations}
\begin{align}
    \underset{\sig{\Phi_u}^p}{\text{minimize}}
        &~~ \sum_{n=1}^{N-1} \Big( \| C^p \Phi_{x,n} \Sigma_{w}^{1 \over 2} \|_F^2 {+} \| \textstyle\sum_{\tilde{p}\in\mathcal{P}} D^{p\tilde{p}} \Phi_{u,n}^{\tilde{p}} \Sigma_{w}^{1 \over 2} \|_F^2 \Big) \nonumber\\[-1.75ex]
        &~~ \hspace*{12.4em} + \| C^p \Phi_{x,N} \Sigma_{w}^{1 \over 2} \|_F^2                                                                         \label{eq: Best_Response_GFNE_SLS_Soft_A}\\
    \underset{\forall n \in [1,N)}{\text{subject to} }
        &~~ \Phi_{x,{n+1}} = A \Phi_{x,n} {+} \textstyle\sum_{\tilde{p}\in\mathcal{P}} B^{\tilde{p}} \Phi^{\tilde{p}}_{u,n},    \label{eq: Best_Response_GFNE_SLS_Soft_B}\\[-1.5ex]
        &~~ \begin{multlined}
            (\Phi_x {*} w)_n \in \mathcal{X}, ~~ (\Phi_u^p {*} w)_n \in \mathcal{U}^p, \\[-0.5ex]
            \hspace*{4.2em} \big( (\Phi_u^p {*} w)_n, (\Phi_u^{-p} {*} w)_n \big)  \in \mathcal{U}_{\mathcal{G}},
        \end{multlined}                                                                                                         \label{eq: Best_Response_GFNE_SLS_Soft_C}\\[0.5ex]
        &~~ \Phi_{x,n} \in \mathcal{C}_{x,n},~~ \Phi_{u,n}^p \in \mathcal{C}_{u,n}^p,                                           \label{eq: Best_Response_GFNE_SLS_Soft_D} \\
        &~~ \Phi_{x,1} = I_{N_x},~~ \| \Phi_{x,N} \|_F^2 \leq \gamma.                                                           \label{eq: Best_Response_GFNE_SLS_Soft_E}
\end{align} \label{eq: Best_Response_GFNE_SLS_Soft}%
\end{subequations}
The solutions to Problem \eqref{eq: Best_Response_GFNE_SLS_Soft} approximate those of the infinite-horizon Problem \eqref{eq: Best_Response_GFNE_SLS}: 
With respect to $N$, the performance of the former converges to that achieved by the latter \cite{Anderson2019}. 
In this case, feasibility only requires $(A,B^p)$ stabilisable and a sufficiently large horizon $N$ to ensure that $\| \Phi_{x,N} \|_F^2 \le \gamma$ is achievable for some $\sig{\Phi_u}^p \in U_{\Phi}^p(\sig{\Phi_u}^{-p})$. 
Computationally, this is still a finite-dimensional convex problem which can be solved numerically. 
Here, we let $\widehat{BR}_{\Phi}^p : \sigset{C}_u^{-p} \rightrightarrows \sigset{C}_u^p$ be the solutions of Problem \eqref{eq: Best_Response_GFNE_SLS_Soft} parametrised by $\sig{\Phi_u}^{-p}$. 
The map $\widehat{BR}_{\Phi} : \sigset{C}_u \rightrightarrows \sigset{C}_u$, $\widehat{BR}_{\Phi}(\sig{\Phi_u}) = \widehat{BR}_{\Phi}^1(\sig{\Phi_u}^{-1}) \times \cdots \times \widehat{BR}_{\Phi}^{N_P}(\sig{\Phi_u}^{-N_P}),$ is the joint \textit{(approximately)best-response} to the system-level profile $\sig{\Phi_u}$.

The complexity of Problem \eqref{eq: Best_Response_GFNE_SLS_Soft} is agnostic to the number of players: The effect of other players' strategies can always be condensed as affine terms (e.g., $z^{-p}_{n} = \sum_{\tilde{p}\in\mathcal{P}\backslash\{p\}} B^{\tilde{p}}\Phi_{u,n}^{\tilde{p}}$).
Conversely, it scales quickly with the FIR horizon $N$ and model dimensions $N_x$ and $N_u^p$. 
We remark, however, that this problem is still convex and can be solved efficiently by exploiting its structure using techniques from real-time optimisation of linear-quadratic control problems \cite{Wang2009}. 
Moreover, if the constraints Eq. \eqref{eq: Best_Response_GFNE_SLS_Soft_C}--\eqref{eq: Best_Response_GFNE_SLS_Soft_E} are \textit{column-separable} (see \cite{Anderson2019}), then Problem \eqref{eq: Best_Response_GFNE_SLS_Soft} can be decomposed into smaller subproblems to be solved in parallel, substantially reducing its computational costs.

Conversely to $BR_{\Phi}$, the fixed-points of $\widehat{BR}_{\Phi}$ do not coincide with the set of GNEs $\Omega_{\mathcal{G}_{\infty}^{\Phi}}$, but are rather contained in the set of $\varepsilon$-GNEs $\Omega_{\mathcal{G}_{\infty}^{\Phi}}^{\varepsilon}$ for some equilibrium gap $\epsilon > 0$. 
This is clear from the fact that the original Problem \eqref{eq: Best_Response_GFNE_SLS} and the approximation Problem \eqref{eq: Best_Response_GFNE_SLS_Soft} have different optimal values. 
Under certain conditions, this fact can be shown explicitly. 

\begin{theorem} \label{thm: BR_SLS_Soft_Equilibrium}
    Consider a fixed-point $\sig{\Phi_u}^{\varepsilon} \in \widehat{BR}_{\Phi}(\sig{\Phi_u}^{\varepsilon})$ and assume that $\| \Phi_{x,N}^{\star} \|_F^2 \le \gamma$ for $\sig{\Phi_x}^{\star} = \sig{F_{\Phi}} \sig{\Phi_u}^{\star}$ obtained from the original best-response $\sig{\Phi_u}^{\star} \in BR_{\Phi}(\sig{\Phi_u}^{\varepsilon})$. Then, the profile $\sig{\Phi_u}^{\varepsilon} = (\sig{\Phi_u}^{1^{\varepsilon}},\ldots,\sig{\Phi_u}^{N_P^{\varepsilon}})$ is an $\varepsilon$-GNE of $\mathcal{G}_{\infty}^{\Phi}$ satisfying
    \begin{equation} \label{eq: BR_SLS_Soft_Equilibrium}
        J^p(\sig{\Phi_u}^{p^\varepsilon}, \sig{\Phi_u}^{-p^\varepsilon}) \leq \min_{\sig{\Phi_u}^p \in U_{\Phi}^p(\sig{\Phi_u}^{-p^{\epsilon}})} J^p(\sig{\Phi_u}^p,\sig{\Phi_u}^{-p^\varepsilon}) + \varepsilon
    \end{equation}
    with $\varepsilon = \max_{p\in\mathcal{P}} \gamma J^p(\sig{\Phi_u}^{p^\varepsilon}, \sig{\Phi_u}^{-p^\varepsilon})$ for every player $p \in \mathcal{P}$.
\end{theorem}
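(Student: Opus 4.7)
My plan is to prove the $\varepsilon$-GNE property player-by-player, by chaining three inequalities that move from the infinite-horizon cost of the approximate best-response, through the soft-problem cost at a carefully chosen competitor, to the infinite-horizon cost of the exact best-response $\sig{\Phi_u}^{p^\star}\in BR^p_\Phi(\sig{\Phi_u}^{-p^\varepsilon})$. The chain I aim for is
\[
    J^p(\sig{\Phi_u}^{p^\varepsilon},\sig{\Phi_u}^{-p^\varepsilon})
    \;\le\; \widehat{J}^p(\sig{\Phi_u}^{p^\varepsilon},\sig{\Phi_u}^{-p^\varepsilon}) + \varepsilon
    \;\le\; J^p(\sig{\Phi_u}^{p^\star},\sig{\Phi_u}^{-p^\varepsilon}) + \varepsilon,
\]
where $\widehat{J}^p$ denotes the truncated objective in \eqref{eq: Best_Response_GFNE_SLS_Soft_A}. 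Because the right-most term is exactly $\min_{\sig{\Phi_u}^p \in U^p_\Phi(\sig{\Phi_u}^{-p^\varepsilon})} J^p(\sig{\Phi_u}^p,\sig{\Phi_u}^{-p^\varepsilon})$, taking the maximum over $p\in\mathcal{P}$ will yield Eq.~\eqref{eq: BR_SLS_Soft_Equilibrium}.

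First, I would use the assumption $\|\Phi_{x,N}^{\star}\|_F^2 \le \gamma$ to build a surrogate that is feasible in the $p$-th soft program at $\sig{\Phi_u}^{-p^\varepsilon}$. Let $\tilde{\Phi}^p_u$ be the length-$N$ truncation of $\sig{\Phi_u}^{p^\star}$. Since $\sig{\Phi_u}^{-p^\varepsilon}$ itself has support on $[0,N)$, the closed-loop state response generated by $(\tilde{\Phi}^p_u,\sig{\Phi_u}^{-p^\varepsilon})$ coincides with $\Phi_{x,n}^{\star}$ for $n\le N$, so the affine constraint \eqref{eq: Best_Response_GFNE_SLS_Soft_B}, the pointwise constraints \eqref{eq: Best_Response_GFNE_SLS_Soft_C}--\eqref{eq: Best_Response_GFNE_SLS_Soft_D}, and $\Phi_{x,1}=I$ are inherited from $\sig{\Phi_u}^{p^\star}$, while the terminal constraint $\|\Phi_{x,N}\|_F^2\le\gamma$ holds precisely by the hypothesis. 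Optimality of $\sig{\Phi_u}^{p^\varepsilon}$ in the soft program then gives $\widehat{J}^p(\sig{\Phi_u}^{p^\varepsilon},\sig{\Phi_u}^{-p^\varepsilon}) \le \widehat{J}^p(\tilde{\Phi}^p_u,\sig{\Phi_u}^{-p^\varepsilon})$. A term-by-term comparison shows that $\widehat{J}^p(\tilde{\Phi}^p_u,\sig{\Phi_u}^{-p^\varepsilon})$ differs from $J^p(\sig{\Phi_u}^{p^\star},\sig{\Phi_u}^{-p^\varepsilon})$ only by replacing the nonnegative infinite tail $\sum_{n\ge N+1}\|C^p\Phi_{x,n}^{\star}\|_F^2 + \sum_{n\ge N}\|D^{pp}\Phi_{u,n}^{p^\star}\|_F^2$ with $0$, so it is upper-bounded by $J^p(\sig{\Phi_u}^{p^\star},\sig{\Phi_u}^{-p^\varepsilon})$. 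This closes the right inequality of the chain.

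The remaining step, which I expect to be the main obstacle, is to bound the infinite-horizon error of the approximate solution by $\varepsilon$. Because $\sig{\Phi_u}^{p^\varepsilon}$ is FIR of length $N$, the dynamics for $n\ge N$ reduce to $\Phi_{x,n}^\varepsilon = A^{n-N}\Phi_{x,N}^\varepsilon$, and the gap takes the explicit form
\[
    J^p(\sig{\Phi_u}^{p^\varepsilon},\sig{\Phi_u}^{-p^\varepsilon})
    - \widehat{J}^p(\sig{\Phi_u}^{p^\varepsilon},\sig{\Phi_u}^{-p^\varepsilon})
    = \sum_{m=1}^{\infty}\bigl\|C^p A^m \Phi_{x,N}^\varepsilon\bigr\|_F^2.
\]
The task is to show that this observability-type continuation cost is at most $\gamma J^p(\sig{\Phi_u}^{p^\varepsilon},\sig{\Phi_u}^{-p^\varepsilon})$. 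The natural route is to combine the soft-FIR bound $\|\Phi_{x,N}^\varepsilon\|_F^2 \le \gamma$ with the fact that the full continuation sum $\sum_{m\ge 0}\|C^p A^m \Phi_{x,N}^\varepsilon\|_F^2$ is itself a nonnegative portion of $J^p(\sig{\Phi_u}^{p^\varepsilon},\sig{\Phi_u}^{-p^\varepsilon})$; detectability of $(C^p,A)$ from Assumption~\ref{as: GFNE_LQGames_Existence}(b) is what makes the cost $J^p(\sig{\Phi_u}^{p^\varepsilon},\sig{\Phi_u}^{-p^\varepsilon})$ finite and provides a well-defined observability Gramian against which to calibrate the $\gamma$-fraction. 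Once this tail estimate is in hand, the three-way chain concludes the proof.
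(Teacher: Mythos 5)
Your construction and the right-hand half of your chain coincide with the paper's proof: the paper also truncates the exact best response $\sig{\Phi_u}^{\star}$ to the FIR horizon to obtain a candidate $\sig{\tilde\Phi_u}^{\varepsilon}$, uses the hypothesis $\|\Phi_{x,N}^{\star}\|_F^2\le\gamma$ to verify feasibility of the terminal constraint, invokes optimality of $\sig{\Phi_u}^{p^\varepsilon}$ in the soft program, and observes that truncation only discards nonnegative tail terms so that the candidate's cost is dominated by $J^p(\sig{\Phi_u}^{p^\star},\sig{\Phi_u}^{-p^\varepsilon})=\min_{\sig{\Phi_u}^p\in U_{\Phi}^p(\sig{\Phi_u}^{-p^\varepsilon})}J^p(\sig{\Phi_u}^p,\sig{\Phi_u}^{-p^\varepsilon})$. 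Up to that point your argument is correct and essentially identical.

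The genuine gap is the step you yourself flag as the main obstacle: bounding $J^p(\sig{\Phi_u}^{p^\varepsilon},\sig{\Phi_u}^{-p^\varepsilon})-\widehat{J}^p(\sig{\Phi_u}^{p^\varepsilon},\sig{\Phi_u}^{-p^\varepsilon})=\sum_{m\ge1}\|C^pA^m\Phi_{x,N}^{\varepsilon}\|_F^2$ by $\gamma\,J^p(\sig{\Phi_u}^{p^\varepsilon},\sig{\Phi_u}^{-p^\varepsilon})$. The route you sketch does not close it. From $\|\Phi_{x,N}^{\varepsilon}\|_F^2\le\gamma$ and detectability of $(C^p,A)$ the best you can extract is $\sum_{m\ge1}\|C^pA^m\Phi_{x,N}^{\varepsilon}\|_F^2\le\gamma\,\|W_o\|$ with $W_o$ the (partial) observability Gramian, and there is no reason why $\|W_o\|\le J^p(\sig{\Phi_u}^{p^\varepsilon},\sig{\Phi_u}^{-p^\varepsilon})$; worse, when $A$ is unstable (as in the paper's first example) the continuation sum can diverge unless $\Phi_{x,N}^{\varepsilon}$ happens to lie in the stable or undetectable subspace, which the Frobenius-norm bound does not guarantee. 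The paper avoids this object entirely: it asserts the standard SLS approximation inequality $J^p(\sig{\tilde\Phi_u}^{p^\varepsilon},\sig{\Phi_u}^{-p^\varepsilon})\le\frac{1}{1-\gamma}J^p(\sig{\Phi_u}^{\star},\sig{\Phi_u}^{-p^\varepsilon})$ (the multiplicative $1/(1-\gamma)$ performance-degradation bound for approximately-FIR responses from \cite{Anderson2019}) and then rearranges $(1-\gamma)J^p(\sig{\Phi_u}^{p^\varepsilon},\cdot)\le\min(\cdot)$ into $J^p(\sig{\Phi_u}^{p^\varepsilon},\cdot)\le\min(\cdot)+\gamma J^p(\sig{\Phi_u}^{p^\varepsilon},\cdot)$, which is exactly Eq.~\eqref{eq: BR_SLS_Soft_Equilibrium} with the stated $\varepsilon$. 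To repair your proof you should either import that multiplicative bound or otherwise relate the tail to the cost achieved by the implemented controller $\sig{K}^p=\sig{\Phi_u}^p\sig{\Phi_x}^{-1}$ rather than to the raw open-loop continuation $A^m\Phi_{x,N}^{\varepsilon}$; as written, the additive tail estimate is not established.
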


The equilibrium gap associated with $\sig{\Phi_u}^{\varepsilon} \in \widehat{BR}_{\Phi}(\sig{\Phi_u}^{\varepsilon})$ thus depends on the choice of parameter $\gamma$, assuming that the terminal constraint Eq. \eqref{eq: Best_Response_GFNE_SLS_Soft_E} also holds for solutions to the original Problem \eqref{eq: Best_Response_GFNE_SLS}. 
Hereafter, $U_{\Phi}^p : \sigset{C}_u^{-p} \rightrightarrows \sigset{C}_u^p$ ($\forall p$) are assumed to incorporate the sets ($\sigset{C}_x$, $\sigset{C}_u^p$) of (soft-constrained) FIR approximations as defined above for a $N > 1$. 

\subsubsection*{Robust operational constraints}

From Assumption \ref{as: GFNE_LQGames_Existence}, the sets $\mathcal{X}$, $\mathcal{U}^p$ ($p\in\mathcal{P}$), and $\mathcal{U}_{\mathcal{G}}$, can be expressed by linear inequalities,
\begin{align*}
    \mathcal{X}               &= \{ x_t \in \mathbb{R}^{N_x} : G_x x_t \preceq \mathbf{1}_{N_{\mathcal{X}}} \};  \\
    \mathcal{U}^p             &= \{ u^p_t \in \mathbb{R}^{N_u^p} : G_u^p u^p_t \preceq \mathbf{1}_{N_{\mathcal{U}}^p} \}; \\
    \mathcal{U}_{\mathcal{G}} &= \{ u_t \in \textstyle\prod_{p\in\mathcal{P}} \mathbb{R}^{N_u^p} : G_{\mathcal{G}} u_t \preceq \mathbf{1}_{N_{\mathcal{U}_{\mathcal{G}}}} \},
\end{align*}
given some matrices $G_x \in \mathbb{R}^{N_{\mathcal{X}} \times N_x}$, $G_u^p \in \mathbb{R}^{N_{\mathcal{U}^p} \times N_u^p}$, and $G_{\mathcal{G}} \in \mathbb{R}^{N_{\mathcal{U}_{\mathcal{G}}} \times N_u}$. 
The map $U^p(u^{-p})$ is then equivalent to the actions $\sig{u}^p = \sig{\Phi_u}^{p} * \sig{w}$ whose associated response $\sig{\Phi_u}^p$ satisfy
\begin{align} 
    [G_x]_i (\Phi_x * w)_n &\leq 1,             \quad i = 1, \ldots, N_{\mathcal{X}};               \label{eq: OperationalConstraints_Inequalities_X}\\ 
    [G_u^p]_j (\Phi_u^p * w)_n &\leq 1,         \quad j = 1, \ldots, N_{\mathcal{U}^p};             \label{eq: OperationalConstraints_Inequalities_U} \\
    [G_{\mathcal{G}}]_l (\Phi_u * w)_n &\leq 1, \quad l = 1, \ldots, N_{\mathcal{U}_{\mathcal{G}}}, \label{eq: OperationalConstraints_Inequalities_G}
\end{align}
with $([G_x]_i, [G_u^p]_j, [G_{\mathcal{G}}]_l)$ being the $i$-th, $j$-th, and $l$-th rows of the corresponding matrices. 
In this work, players are assumed to synthesise policies satisfying these constraints for any $\sig{w} \in \sigset{W}$. 
Equivalently, we cast Problem \eqref{eq: Best_Response_GFNE_SLS_Soft} as a robust optimization problem by considering the worst-case realisation of the noise. 
Specifically, we reformulate the constraint Eq. \eqref{eq: OperationalConstraints_Inequalities_X} as
\begin{equation*}
    \sup_{\bar{w} \in \mathcal{W}^N} \left\{ \textstyle\sum_{n'=0}^N [G_x]_i \Phi_{x,n'} \bar{w}_{n'} \right\} \leq 1, \quad i = 1, \ldots, N_{\mathcal{X}}, \label{eq: Robust_Constraint_Supremum}
\end{equation*}
and similarly for Eqs. \eqref{eq: OperationalConstraints_Inequalities_U}--\eqref{eq: OperationalConstraints_Inequalities_G}. 
Since $(\sig{\Phi}_x, \sig{\Phi}_u^p)$ are FIR maps, it suffices to consider noise sequences of length $N$, $\bar{w} \in \mathcal{W}^N = \prod_{n'=1}^{N} \mathcal{W}$, then exploit our knowledge of $\mathcal{W}$ to analytically solve this supremum. 
A common instance of $\mathcal{G}_{\infty}^{\text{LQ}}$ consists on the problems in which $(w_n)_{n\in\mathbb{N}}$ is known to satisfy
\begin{equation*}
    w_n \in \mathcal{W} = \{ P \zeta \in \mathbb{R}^{N_x}: \| \zeta \|_q \leq 1 \}, \quad \forall n \in \mathbb{N},
\end{equation*}
given a full-column-rank matrix $P \in \mathbb{R}^{N_x \times N_{\zeta}}$ and the $\ell_q$-norm $\|\cdot\|_q$. 
Using standard results from linear algebra \cite{Boyd2004}, the robust counterpart of constraints Eqs. \eqref{eq: OperationalConstraints_Inequalities_X}--\eqref{eq: OperationalConstraints_Inequalities_G} are the constraints
\begin{align} 
    N^{1/q} \big\| (I_N \otimes P^{\tran})([G_x]_i \bar{\Phi}_{x})^{\tran} \big\|_q^{*}  &\leq 1,                          \quad (\forall i); \label{eq: Robust_OperationalConstraints_X}\\ 
    (N{-}1)^{1/q} \big\| (I_{N{-}1} \otimes P^{\tran})([G_u^p]_j \bar{\Phi}_{u}^p)^{\tran} \big\|_q^{*} &\leq 1,           \quad (\forall j); \label{eq: Robust_OperationalConstraints_U} \\
    (N{-}1)^{1/q} \big\| (I_{N{-}1} \otimes P^{\tran})([G_{\mathcal{G}}]_l \bar{\Phi}_{u})^{\tran} \big\|_q^{*}  &\leq 1,  \quad (\forall l), \label{eq: Robust_OperationalConstraints_G}
\end{align}
in which $\bar{\Phi}_{x} = [\Phi_{x,1} ~ \cdots ~ \Phi_{x,N}]$, $\bar{\Phi}_{u}^p = [\Phi_{u,1}^p ~ \cdots ~ \Phi_{u,N-1}^p]$, and $\bar{\Phi}_{u} = [\Phi_{u,1} ~ \cdots ~ \Phi_{u,N-1}]$. 
We refer to the Supplementary Material for a detailed derivation. 
The Problem \eqref{eq: Best_Response_GFNE_SLS_Soft} is thus rendered robust to the uncertainty in $\sig{w} \in \sigset{W}$ by incorporating the worst-case constraints Eqs. \eqref{eq: Robust_OperationalConstraints_X}--\eqref{eq: Robust_OperationalConstraints_G} in place of Eq. \eqref{eq: Best_Response_GFNE_SLS_Soft_C}.

\begin{remark}
    If $\mathcal{X} = \mathbb{R}^{N_x}$, $\mathcal{U}^p = \mathbb{R}^{N_u^p}$, or $\mathcal{U}_{\mathcal{G}} = \prod_{p\in\mathcal{P}} \mathbb{R}^{N_u^p}$, then the corresponding constraints are trivially satisfied for any $w \in \bm{\mathcal{W}}$ and can be removed from Problem \eqref{eq: Best_Response_GFNE_SLS}. 
    Conversely, if $\mathcal{W} = \mathbb{R}^{N_x}$ when either $\mathcal{X}$, $\mathcal{U}^p$, or $\mathcal{U}_{\mathcal{G}}$ is bounded, then no stabilising policy can enforce those constraints for all $\sig{w}$.
\end{remark}

\begin{remark} \label{rmk: Constraint_SignInvariance}
    The constraints Eq. \eqref{eq: Robust_OperationalConstraints_X}--\eqref{eq: Robust_OperationalConstraints_G} are equivalent to those with $(-[G_x]_i,-[G_u^p]_j,-[G_{\mathcal{G}}]_l)$. 
    As such, matrices in the form $\widetilde{G} = \texttt{col}(G, -G)$ (i.e., enforcing $-\bm{1}_{N_z} \preceq G z \preceq \bm{1}_{N_z}$) can be replaced by $G$ ($G z \preceq \bm{1}_{N_z}$) and yield the same results.
\end{remark}

\subsubsection*{Structural constraints}

The sets $(\mathcal{C}_{x,n}, \mathcal{C}^p_{u,n})_{n\in\mathbb{N}^+}$ ($\forall p \in \mathcal{P}$) are designed to impose some structure directly on the policy $\sig{K}^p$ (Corollary \ref{cor: ControlImplementation}), often in the form of sparsity constraints. 
We consider the class of structural constraints which encode information patterns incurred by actuation and communication delays: 
Let $\sig{K}^p \in \sigset{C}^p$ ($\forall p$) satisfy the restrictions
\begin{equation*} 
    \begin{aligned}
        \sigset{C}^p = \{ \sig{K}^p \in \mathcal{L}(\ell_{\infty}^{N_x},\ell_{\infty}^{N_u^p}) : \ 
            & \text{The state } [x_t]_i \text{ is propagated to} \\[-0.5ex]
            & \text{and affected by } [B^p K^p x_t ]_i \text{ with} \\[-0.5ex]
            & \text{delays } d_c, d_a > 0 \text{, respectively} \}.
    \end{aligned}%
\end{equation*}%
and consider the operators $\sig{S}_x : \sig{\Phi_x} \mapsto (S_{x,n} \odot \Phi_{x,n})_{n\in\mathbb{N}_+}$ and $\sig{S}_u^p : \sig{\Phi_u}^p \mapsto (S_{u,n}^p \odot \Phi_{u,n}^p)_{n\in\mathbb{N}_+}$ ($\forall p \in \mathcal{P}$), given the signals 
\begin{align*}
    (S_{x,n})_{n\in\mathbb{N}_+} &= \left( \texttt{Sp}(A^{\max{(0, \lfloor \frac{n - d_a}{d_c} \rfloor)}}) \right)_{n\in\mathbb{N}_+};  \\
    (S_{u,n}^p)_{n\in\mathbb{N}_+} &= \left( \texttt{Sp}({B^{p}}^{\tran}A^{\max{(0, \lfloor \frac{n - d_a}{d_c} \rfloor)}}) \right)_{n\in\mathbb{N}_+},
\end{align*}
with $\texttt{Sp}(\cdot)$ denoting the sparsity pattern of a matrix, that is, $[\texttt{Sp}(X)]_{i,j} = 1$ if $[X]_{i,j} \ne 0$ and $[\texttt{Sp}(X)]_{i,j} = 0$ otherwise, for any matrix $X$. 
It can be shown that $\sig{K}^p = \sig{\Phi_u}^p \sig{\Phi_x}^{-1} \in \sigset{C}^{p}$ if $(\Phi_{x,n} \in \mathcal{C}_{x,n})_{n\in\mathbb{N}^+}$ and $(\Phi^p_{u,n} \in \mathcal{C}^p_{u,n})_{n\in\mathbb{N}^+}$ with convex sets
\begin{subequations} 
    \begin{align}
        \mathcal{C}_{x,n}   &= \{ \Phi_{x,n}   \in \mathbb{R}^{N_x \times N_x}   : \Phi_{x,n}   = S_{x,n}   \odot \Phi_{x,n}   \}; \\ 
        \mathcal{C}_{u,n}^p &= \{ \Phi_{u,n}^p \in \mathbb{R}^{N_u^p \times N_x} : \Phi_{u,n}^p = S_{u,n}^p \odot \Phi_{u,n}^p \}.
    \end{align} \label{eq: DelayConstraints}%
\end{subequations}
The constraints Eq. \eqref{eq: DelayConstraints} enforce that the closed-loop response to the noise obeys an information pattern induced by the dynamics of the game $\mathcal{G}_{\infty}^{\text{LQ}}$. 
Specifically, $[S_{x,n}]_{i,j} = 0$ (resp., $[S_{u,n}^p]_{i,j} = 0$) implies that disturbances to the $j$-th component of the state, $[x_t]_j$, should not affect the state $[x_{t+n}]_i$ (the action $[u_{t+n}^p]_i$) when the policies $\sig{K}^p = \sig{\Phi_u}^{p}\sig{\Phi_x}^{-1}$ are employed. 
Enforcing sparsity also benefits the tractability of the best-response maps, as this reduces the effective number of decision variables.

The ability to impose a desired policy structure using convex constraints is a central feature of the SLS framework. 
In the context of dynamic games, it allows for describing and, most importantly, solving problems where players have asymmetric information patterns; a major challenge for feedback Nash equilibrium problems \cite{Nayyar2012, Basar2014}. 
Considering $(\mathcal{C}_{x,n}, \mathcal{C}^p_{u,n})_{n\in\mathbb{N}^+}$ from Eq. \eqref{eq: DelayConstraints}, the feasibility of the best-response maps $BR^p$ ($\forall p$) become dependent on the parameters $d_a$ and $d_c$: 
The larger their values the more restrictive the search space of matrices $\{ \Phi_{x,n}, \Phi_{u,n}^p \}_{n=1}^{N}$. 
As such, the actuation and communication delays associated with the game directly affect the existence of GFNE (that is, $\Omega_{\mathcal{G}_{\infty}^{\text{LQ}}}^{\sig{K}} \neq \emptyset$).
In practice, whether these structural constraints are consistent or not can be assessed by solving the feasibility problem associated with Problem \eqref{eq: Best_Response_GFNE_SLS_Soft} \cite{Boyd2004}.
A formal analysis of the requirements for $d_a$ and $d_c$ to ensure $\Omega_{\mathcal{G}_{\infty}^{\text{LQ}}}^{\sig{K}} \neq \emptyset$ is beyond the scope of this paper.

\subsection{System-level best-response dynamics} \label{sec: BRD_via_SLS_Method}

A learning dynamics based on the system-level best-responses $\{ BR_{\Phi}^p \}_{p\in\mathcal{P}}$ relies on the following central result.

\begin{theorem} \label{thm: SLS_BR_Equilibrium}
    A policy profile $\sig{K}^{\star} = (\sig{\Phi_u}^{1^{\star}},\ldots,\sig{\Phi_u}^{N_P^{\star}})\sig{\Phi_x}^{\star^{-1}} \hspace*{-0.33em}\in \sigset{C}$, is a GFNE of $\mathcal{G}_{\infty}^{\text{LQ}}$ if $\sig{\Phi_u}^{\star} \in BR_{\Phi}(\sig{\Phi_u}^{\star})$, or, equivalently,
    \begin{equation} \label{eq: SLS_BR_Equilbrium}
        \sig{\Phi_u}^{p^{\star}} \in BR_{\Phi}^p(\sig{\Phi_u}^{-p^{\star}}), \quad \forall p \in \mathcal{P}.    
    \end{equation}
\end{theorem}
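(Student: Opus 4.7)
The plan is to translate the fixed-point condition $\sig{\Phi_u}^* \in BR_{\Phi}(\sig{\Phi_u}^*)$ in the system-level response space into the fixed-point condition $\sig{K}^* \in BR(\sig{K}^*)$ in the policy space, which by the characterisation $\Omega_{\mathcal{G}_{\infty}}^{\sig{K}} = \{ \sig{K}^* \in \sigset{C} : \sig{K}^* \in BR(\sig{K}^*) \}$ directly yields that $\sig{K}^*$ is a GFNE in the sense of Definition~\ref{def: Feedback_Nash_Equilbrium}. The backbone of the argument is the bijection between internally stabilising policies and admissible closed-loop responses established in Theorem~\ref{thm: SystemLevelSynthesis}.

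First, I would fix $\sig{\Phi_u}^*$ satisfying \eqref{eq: SLS_BR_Equilbrium} and set $\sig{\Phi_x}^* = \sig{F_{\Phi}} \sig{\Phi_u}^*$, so that $(\sig{\Phi_x}^*, \sig{\Phi_u}^{1^*}, \ldots, \sig{\Phi_u}^{N_P^*})$ satisfies the affine relation~\eqref{eq: SLP_AffineSpace}. Part~(b) of Theorem~\ref{thm: SystemLevelSynthesis} then ensures that each $\sig{K}^{p^*} = \sig{\Phi_u}^{p^*}\sig{\Phi_x}^{*^{-1}}$ is well-defined, internally stabilising, and achieves the response $(\sig{\Phi_x}^*, \sig{\Phi_u}^{p^*})$; moreover, $\sig{K}^{p^*} \in \sigset{C}^p$ by the construction $\sigset{C}_u^p = \{ \sig{K}^p \sigset{C}_x : \sig{K}^p \in \sigset{C}^p \}$. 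The induced closed-loop trajectory is $\sig{u}^{p^*} = \sig{K}^{p^*}\sig{x}^* = \sig{\Phi_u}^{p^*} * \sig{w}$ with $\sig{x}^* = \sig{\Phi_x}^* * \sig{w}$ for every $\sig{w} \in \sigset{W}$.

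Next, I would establish that for each $p$, the system-level best-response problem~\eqref{eq: Best_Response_GFNE_SLS} parametrised by $\sig{\Phi_u}^{-p^*}$ and the original best-response problem~\eqref{eq: Best_Response_GFNE} parametrised by $\sig{u}^{-p^*} = \sig{\Phi_u}^{-p^*} * \sig{w}$ are equivalent under the above bijection. Every stabilising candidate deviation $\sig{K}^p \in \sigset{C}^p$ feasible for \eqref{eq: Best_Response_GFNE} corresponds uniquely to a response $\sig{\Phi_u}^p \in U_{\Phi}^p(\sig{\Phi_u}^{-p^*})$ inducing identical state and input trajectories; Parseval's identity (used in the derivation preceding \eqref{eq: SLS_Dynamic_Game}) equates the objective values of \eqref{eq: QuadraticCosts} and \eqref{eq: Best_Response_GFNE_SLS_A}; and the feasible sets $U^p(\sig{u}^{-p^*})$ and $U_{\Phi}^p(\sig{\Phi_u}^{-p^*})$ are in exact correspondence by construction of $U_{\Phi}^p$. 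Consequently, $\sig{\Phi_u}^{p^*} \in BR_{\Phi}^p(\sig{\Phi_u}^{-p^*})$ is equivalent to $\sig{K}^{p^*} \in BR^p(\sig{K}^{-p^*})$, and joining over $p \in \mathcal{P}$ gives $\sig{K}^* \in BR(\sig{K}^*)$, as required.

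The main obstacle is verifying the exactness of the correspondence between admissible deviations on both sides, so that no unilateral improvement in \eqref{eq: Best_Response_GFNE} is missed by its system-level counterpart \eqref{eq: Best_Response_GFNE_SLS}. This requires the injectivity of $\sig{F_w}$ from Assumption~\ref{as: FeedbackNash_Equilbrium_Existence} (so that distinct responses produce distinct action signals and the bijection is one-to-one on trajectories), the compatibility of $\sigset{C}_u^p$ with the structural set $\sigset{C}^p$, and the fact that the parametrisation in Theorem~\ref{thm: SystemLevelSynthesis} covers \emph{all} internally stabilising policies rather than a proper subset; restricting $\sigset{C}_x$ and $\sigset{C}_u^p$ to FIR or approximately-FIR classes would instead yield only the $\varepsilon$-equilibrium statement of Theorem~\ref{thm: BR_SLS_Soft_Equilibrium}, so the equivalence here is stated in the unrestricted infinite-horizon setting.
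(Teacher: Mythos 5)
Your proposal is correct and follows essentially the same route as the paper's proof: construct $\sig{K}^{p^*}=\sig{\Phi_u}^{p^*}\sig{\Phi_x}^{*^{-1}}$ via Theorem~\ref{thm: SystemLevelSynthesis}, identify $\sig{u}^{p^*}=\sig{\Phi_u}^{p^*}*\sig{w}$ as the open-loop realisation, match feasible sets and (via Parseval) objective values, and conclude from the fixed-point property of $BR_{\Phi}$. If anything, you are more explicit than the paper about why no improving unilateral deviation can be missed (surjectivity of the parametrisation onto all internally stabilising policies and injectivity of $\sig{F_w}$), a point the paper leaves implicit.
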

\begin{proof}
    Consider an arbitrary fixed-point $\sig{\Phi_u}^{\star} \in BR_{\Phi}^p(\sig{\Phi_u}^{\star})$. 
    From Theorem \ref{thm: SystemLevelSynthesis}, we have $\sig{\Phi_x}^{\star} = \sig{F_{\Phi}} \sig{\Phi_u}^{\star}$.
    Now, consider policies $\sig{K}^{p^{\star}} = \sig{\Phi_u}^{p^{\star}}(\sig{\Phi_x}^{\star})^{-1}$, $p \in \mathcal{P}$. 
    Clearly, $\sig{\Phi_u}^{p^{\star}} = \sig{K}^{p^{\star}} \sig{\Phi_x}^{\star}$. 
    As a consequence, for any $\sig{w} \in \sigset{W}$, 
    \begin{equation*}
        \sig{\Phi_u}^{p^{\star}} \sig{w} = \sig{K}^{p^{\star}} \sig{\Phi_x}^{\star} \sig{w}  ~~\iff~~ \sig{u}^{p^{\star}} = \sig{K}^{p^{\star}}\hspace*{-0.33em} \sig{x}^{\star}.
    \end{equation*}
    and, by definition, $\sig{\Phi_u}^{p^{\star}} \in U_{\Phi}^{p}(\sig{\Phi_u}^{-p^{\star}})$ imply $\sig{u}^{p^{\star}} \in U^p(\sig{u}^{-p^{\star}})$. 
    Thus, $\sig{u}^{\star}$ is an open-loop realisation of the policy $\sig{K}^{\star}$. 
    Finally, since $J^p(\sig{u}^{p^{\star}}, \sig{u}^{-p^{\star}}) \cong J^p(\sig{\Phi_u}^{p^{\star}}, \sig{\Phi_u}^{-p^{\star}})$ and $\sig{\Phi_u}^{\star} \in \Omega_{\mathcal{G}_{\infty}^{\Phi}}$, we conclude that no player can obtain an admissible policy that unilaterally improves its cost, that is, $\sig{K}^{\star} \in \Omega_{\mathcal{G}_{\infty}^{\text{LQ}}}^{\sig{K}}$.
\end{proof}

\begin{algorithm}[b!] \label{alg: BRD_Dynamic_SLS}
    \caption{System-level BRD (SLS-BRD)}
    \KwIn{Game $\mathcal{G}_{\infty}^{\text{LQ}} \coloneqq (\mathcal{P}, \bm{\mathcal{X}}, \{ \bm{\mathcal{U}^p} \}_{p\in\mathcal{P}}, \sigset{W}, \{ J^p \}_{p\in\mathcal{P}})$}
    \KwOut{GFNE $\sig{K}^{\star} = (\sig{K}^{1^{\star}},\ldots,\sig{K}^{N_P^{\star}})$} 

    \vspace*{0.5em}
    Initialize $\sig{K}_{0} \coloneqq ( \sig{K}_{0}^1, \ldots, \sig{K}_{0}^{N_P})$ and $k \coloneqq 0$\;
    \For{$t = 0, 1, 2, \ldots$}{
        {\footnotesize\tcc{Players apply actions $\{ u^p_{k,t} = K^p_k x_{k,t} \}_{p\in\mathcal{P}}$} }
        \lIf{$\sig{\Phi_{u,k}} \in BR_{\Phi}(\sig{\Phi_{u,k}})$}{\Return{$\sig{K}_k$}}
        \If{$t = (k{+}1)\Delta T$}{
            \For{$p \in \mathcal{P}$}{
                Update $\sig{K}_{k+1}^p$ by computing the kernels
                    $\begin{aligned}
                        \sig{\Phi_{u,k+1}}^p & \coloneqq (1{-}\eta) \sig{\Phi_{u,k}}^p + \eta BR^p_{\Phi}(\sig{\Phi_{u,k}}^{-p}), \\[-0.5ex]
                        \sig{\Phi_{x,k+1}}^p &\coloneqq \sig{F_{\Phi}} (\sig{\Phi_{u,k+1}}^p, \sig{\Phi_{u,k}}^{-p})
                    \end{aligned}$                    
            }
            $k \coloneqq k + 1$\;
        }
    }
\end{algorithm} 

The relationship between $BR$ and $BR_{\Phi}$ implies that a GFNE of $\mathcal{G}_{\infty}^{\text{LQ}}$ can be obtained analytically from a GNE of $\mathcal{G}_{\infty}^{\Phi}$. 
This allows us to adapt the BRD-GFNE procedure from Algorithm \ref{alg: BRD_Dynamic} into a procedure for GFNE seeking in constrained infinite-horizon dynamic games based on the mappings $\{ BR_{\Phi}^p \}_{p\in\mathcal{P}}$. 
This system-level best-response dynamics (SLS-BRD) approach is given in Algorithm \ref{alg: BRD_Dynamic_SLS}. 
We remark on some technical aspects:
\begin{itemize}
    \item 
    The pair $(\sig{\Phi_{x,k}}^p, \sig{\Phi_{u,k}}^p)$ is the parametrisation of $p$-th player's policy after $k \in \mathbb{N}$ updates, that is, the signals $\sig{\Phi_{x,k}}^p = (\Phi_{x,k,t}^p)_{t\in\mathbb{N}}$ and $\sig{\Phi_{u,k}}^p = (\Phi_{u,k,t}^p)_{t\in\mathbb{N}}$. 
    The index $k \in \mathbb{N}$ should not be mistaken for the stage index $t \in \mathbb{N}$.
    \item 
    Updating the policy to $\sig{K}_{k+1}^p$ consists of employing the Corollary \ref{cor: ControlImplementation} with the updated maps $(\sig{\Phi_{x,k+1}}^p, \sig{\Phi_{u,k+1}}^p)$. 
    \item 
    Responses $\{ \sig{\Phi_{x,k}}^p \}_{p\in\mathcal{P}}$ are most likely distinct at $k < \infty$, that is, $\sig{\Phi_{x,k}}^p \ne \sig{\Phi_{x,k}}^{\tilde{p}}$ for $p \ne \tilde{p}$. 
    Consequently, the system level parametrisation (Eq. \ref{eq: SLP_AffineSpace}) might not hold for the profile $\sig{K}_k = \big(\sig{\Phi_{u,k}}^1(\sig{\Phi_{x,k}}^1)^{-1}, \ldots, \sig{\Phi_{u,k}}^{N_P}(\sig{\Phi_{x,k}}^{N_P})^{-1}\big)$ and this could lead to stability issues. 
    However, this policy still satisfies a robust variant of Theorem \ref{thm: SystemLevelSynthesis} when the distances $\| \sig{\Phi_{x,k}}^p - \sig{F_{\Phi}}\sig{\Phi_{u,k}}\|$ ($\forall p \in \mathcal{P}$) are sufficiently small \cite{Anderson2019}.
\end{itemize}

The SLS-BRD induces an operator $T_{\Phi} = (1 - \eta)I + \eta BR_{\Phi}$, which defines the global update $\sig{\Phi_{u,k+1}}$ from $\sig{\Phi_{u,k}}$. 
As before, $T_{\Phi}$ and $BR_{\Phi}$ share the same fixed-points: The GNEs $\Omega_{\mathcal{G}_{\infty}^{\Phi}}$. 
From Theorem \ref{thm: SLS_BR_Equilibrium}, convergence to a response $\sig{\Phi_u}^{\star} \in T_{\Phi}(\sig{\Phi_u}^{\star})$ then implies convergence to a policy $\sig{K}^{\star} \in \Omega_{\mathcal{G}_{\infty}}^{\sig{K}}$. 
Hence, this learning dynamics is a formal procedure for GFNE seeking.

In this general form, Algorithm \ref{alg: BRD_Dynamic_SLS} is still unpractical due to $\{ BR_{\Phi}^p \}_{p\in\mathcal{P}}$ being intractable. 
The SLS-BRD can be adapted to consider instead $\{ \widehat{BR}_{\Phi}^p \}_{p\in\mathcal{P}}$: The players' updates become 
\begin{equation*}
    \sig{\Phi_{u,k+1}}^p \coloneqq (1{-}\eta) \sig{\Phi_{u,k}}^p + \eta \widehat{BR}^p_{\Phi}(\sig{\Phi_{u,k}}^{-p}).
\end{equation*}
The global update rule induced by this modified algorithm is $\widehat{T}_{\Phi} = (1 - \eta)I + \eta \widehat{BR}_{\Phi}$. 
In this case, the fixed-points of $\widehat{T}_{\Phi}$ coincide with those of $\widehat{BR}_{\Phi}$. 
As such, this (approximately)best-response dynamics is a procedure for $\epsilon$-GFNE seeking, $\varepsilon > 0$. 

\subsubsection*{Convergence of the SLS-BRD}

The convergence of the Algorithm \ref{alg: BRD_Dynamic_SLS} depends on $BR_{\Phi}$ (or $\widehat{BR}_{\Phi}$, for its tractable version) being at least nonexpansive (Lemmas \ref{lem: BRD_Convergence}--\ref{lem: BRD_Geometric_Convergence}). 
Formally, 

\begin{corollary} \label{thm: SLS_BRD_Convergence}
    Let $\widehat{BR}_{\Phi} : \sigset{C}_u \rightrightarrows \sigset{C}_u$ be $L_{\widehat{BR}_{\Phi}}$-Lipschitz, with $L_{\widehat{BR}_{\Phi}} < 1$. 
    Then, the SLS-BRD $\sig{\Phi_{u,k+1}} = \widehat{T}_{\Phi}(\sig{\Phi_{u,k}})$ converge to the unique $\varepsilon$-GNE $\sig{\Phi_u}^{\star} \in \Omega_{\mathcal{G}_{\infty}^{\Phi}}^{\varepsilon}$ with rate
    \begin{equation} \label{eq: NE_LocalAttractor_Convergence}
        \cfrac{\| \sig{\Phi_{u,k}} - \sig{\Phi_u}^{\star} \|_{\ell_2}}{\| \sig{\Phi_{u,0}} - \sig{\Phi_u}^{\star} \|_{\ell_2}} \leq \big( (1{-}\eta) - \eta L_{\widehat{BR}_{\Phi}} \big)^{k}
    \end{equation}
    from any feasible initial $\sig{\Phi_{u,0}}$.
\end{corollary}

In general, determining a Lipschitz constant for such mappings is challenging. 
However, for linear-quadratic games $\mathcal{G}_{\infty}^{\text{LQ}}$ where $\mathcal{W}$ is a polyhedron, best-responses are piecewise-affine operators and their Lipschitz properties are straightforward. 
In the following, we use this fact to establish some conditions for convergence of the SLS-BRD for a specific class of games.

Consider the (approximately)best-response maps $\{ \widehat{BR}_{\Phi}^p \}_{p\in\mathcal{P}}$ and assume $N$ sufficiently large to ensure that $\| \Phi_{x,N} \|_F^2 < \gamma$ is strictly satisfied.
For notational convenience, let us introduce the operators $\sig{F_{\Phi}}^p = (I - \sig{S_{+}}\sig{A})^{-1}\sig{S_{+}}\sig{B}^p$ ($\forall p \in \mathcal{P}$) and signal $\sig{F_{\Phi}}^0 = (I - \sig{S_{+}}\sig{A})^{-1} \sig{\delta} I_{N_x}$, and the objective-related mappings $\sig{C}^{p} : \sig{\Phi_x} \mapsto (C^p \Phi_{x,n})_{n\in\mathbb{N}}$ and $\sig{D}^{p\tilde{p}} : \sig{\Phi_u}^{\tilde{p}} \mapsto (D^{p\tilde{p}} \Phi_{u,n}^{\tilde{p}})_{n\in\mathbb{N}}$, with $\sig{D}^{p0} = 0$. 
Moreover, for all $p,\tilde{p} \in \mathcal{P}$, define the operators
\begin{equation} \label{eq: HessianMatrices}
        \sig{H}^{p\tilde{p}} = 
        (\sig{C}^p \sig{F_{\Phi}}^p + \sig{D}^{pp})^{\tran} (\sig{C}^p\sig{F_{\Phi}}^{\tilde{p}} + \sig{D}^{p\tilde{p}}),
\end{equation}%
and $\sig{H}^{p,-p} = (\sig{H}^{p,\tilde{p}})_{\tilde{p}\in\mathcal{P}}$. 
Because $\{ \sig{\Phi}_{u}^p \}_{p\in\mathcal{P}}$ are FIR, all the elements defined above have equivalent matrix representations. 
Using this notation, Problem \eqref{eq: Best_Response_GFNE_SLS_Soft} can be reformulated as
\begin{align} %
    \underset{\sig{\Phi_u}^p \in U_{\Phi}^p(\sig{\Phi_u}^{-p})}{\text{minimize}} ~
        & \mathrm{Tr}\big[ {\sig{\Phi_u}^{p}}^{\tran} \sig{H}^{pp} \sig{\Phi_u}^p \nonumber\\[-1.5ex]
        &   \quad + 2 (\textstyle\sum_{\tilde{p}\in\mathcal{P}\backslash\{p\}} \sig{H}^{p\tilde{p}} \sig{\Phi_u}^{\tilde{p}} + \sig{H}^{p0})^{\tran} \sig{\Phi_u}^p \big]  \label{eq: Best_Response_GFNE_SLS_Short}  
\end{align} %
The maps $BR^p(\sig{\Phi_u}^{-p})$, $p\in\mathcal{P}$, thus correspond to the solution of quadratic programs with convex constraints $U_{\Phi}^p(\sig{\Phi_u}^{-p})$, for $\sig{\Phi_u}^{-p} \in \sigset{C}_u^{-p}$. In the case of $\mathcal{W} = \{ w_t \in \mathbb{R}^{N_x} : \| w_t \|_\infty \leq 1 \}$ and no structural constraints ($\sig{S}_x = \sig{S}_u^p = I$), we have 
\begin{equation} \label{eq: Best_Response_GFNE_SLS_Short_Constraints}
    \begin{aligned}
        U_{\Phi}^p(\sig{\Phi_u}^{-p}) &= \{ \sig{\Phi_u}^p \in \ell_2[0,N) : \\
            &  \Phi_{x,{n+1}} = A \Phi_{x,n} {+} \textstyle\sum_{\tilde{p}\in\mathcal{P}} B^{\tilde{p}} \Phi^{\tilde{p}}_{u,n}, ~ \Phi_{x,1} = I_{N_x}, \\
            & \big\| ([G_x]_i \bar{\Phi}_{x})^{\tran} \big\|_1 \leq 1             , ~ i = 1, \ldots, N_{\mathcal{X}};               \\
            & \big\| ([G_u^p]_j \bar{\Phi}_{u}^p)^{\tran} \big\|_1 \leq 1         , ~ j = 1, \ldots, N_{\mathcal{U}^p};             \\
            & \big\| ([G_{\mathcal{G}}]_l \bar{\Phi}_{u})^{\tran} \big\|_1  \leq 1, ~ l = 1, \ldots, N_{\mathcal{U}_{\mathcal{G}}}  \}.
    \end{aligned}
\end{equation} 
Using standard techniques from optimisation, the constraints Eq. \eqref{eq: Best_Response_GFNE_SLS_Short_Constraints} can be incorporated into Problem \eqref{eq: Best_Response_GFNE_SLS_Short} as linear inequalities. 
The best-responses mappings $\{ \widehat{BR}_{\Phi}^p \}_{p\in\mathcal{P}}$, are thus piecewise affine in $\sig{\Phi_u}^{-p} \in \sigset{C}_u^{-p}$ \cite{Borrelli2017}. 
Consequently, also $\widehat{BR}_{\Phi}$ must be piecewise affine: A local Lipschitz constant can then be derived for each region of $\sigset{C}_u^{-p}$ that leads to a subset of the operational constraints being active. 
For non-generalised games, this fact can be exploited to derive a global Lipschitz constant for $\widehat{BR}_{\Phi}$. 

\begin{theorem} \label{thm: SLS_BR_LipschitzConstant}
    Consider $\mathcal{X} = \mathbb{R}^{N_x}$, $\mathcal{U}_{\mathcal{G}} = \prod_{p\in\mathcal{P}}\mathbb{R}^{N_u^p}$, and $\mathcal{U}^p = \{ u^p_t \in \mathbb{R}^{N_u^p} : G_u^p u^p_t \preceq \mathbf{1}_{N_{\mathcal{U}}^p} \}$ with $G_u^p$ full-row-rank. 
    Then, the best-response map $\widehat{BR}_{\Phi}$ is $L_{\widehat{BR}_{\Phi}}$-Lipschitz with 
    \begin{equation} \label{eq: SLS_BR_LipschitzConstant}
        L_{\widehat{BR}_{\Phi}}^2 = \textstyle\sum_{p\in\mathcal{P}} (L_{\widehat{BR}_{\Phi}}^p)^2 ,
    \end{equation}
    given the player-specific $L_{\widehat{BR}_{\Phi}}^p = \sigma_{\max}(\sig{H}^{p,-p})/\sigma_{\min}(\sig{H}^{pp})$.
\end{theorem}
The proof of Theorem \ref{thm: SLS_BR_LipschitzConstant} is extensive: The reader is referred to the Supplementary Material for the full details. 
The constants $L_{\widehat{BR}_{\Phi}}^p$ ($p \in \mathcal{P}$) quantify the ability of each player to optimize its objective in face of its rivals' interference. 
Importantly, this Lipschitz constant is not tight and $L_{\widehat{BR}_{\Phi}} < 1$ using Eq. \eqref{eq: SLS_BR_LipschitzConstant} is only a sufficient (but not necessary) condition for Corollary \ref{thm: SLS_BRD_Convergence} to hold in practice. 
Regardless, it highlights some intuitive, but non-trivial, facts about the convergence of the SLS-BRD:
\begin{itemize}
    \item 
    The condition $L_{\widehat{BR}_{\Phi}} < 1$ is satisfied if the block-operator $\sig{H} = [H^{p\tilde{p}}]_{p,\tilde{p}\in\mathcal{P}}$ is diagonally dominant. 
    This highlights the relationship between the SLS-BRD and classical Jacobi iterative methods, where diagonal dominance of the linear system being solved is a known requirement.
    \item 
    The convergence rates of the SLS-BRD depend directly on $\| D^{pp} \|_2$ ($\forall p$) through $\sigma_{\min}( \sig{H}^{pp}) = \sigma_{\min}^2( \sig{C}^p \sig{F_{\Phi}}^p + \sig{D}^{pp})$. 
    As such, faster convergence is expected for games in which players apply strong penalties to their own actions.
    \item 
    The convergence rates of the SLS-BRD depend on the number of players since $L_{\widehat{BR}_{\Phi}}^p > 0$ for all $p \in \mathcal{P}$. 
    In large-scale games, players might need to apply strong penalties to their actions (through $D^{pp}$) to ensure convergence.
    \item 
    The convergence rate of the SLS-BRD can be dominated by the slowest player: 
    Whenever there exists a $p \in \mathcal{P}$ such that $L_{\widehat{BR}_{\Phi}}^{p} \gg L_{\widehat{BR}_{\Phi}}^{\tilde{p}}$ for all $\tilde{p} \in \mathcal{P}$, then $L_{\widehat{BR}_{\Phi}} \approx L_{\widehat{BR}_{\Phi}}^{p}$.
\end{itemize}

At the cost of interpretability, similar Lipschitz constants as in Eq. \eqref{eq: SLS_BR_LipschitzConstant} can also be obtained for general $(\mathcal{X},\mathcal{U}_{\mathcal{G}},\mathcal{U}^p)$ and $P$, and when structural constraints are present. 
We stress that Theorem \ref{thm: SLS_BR_LipschitzConstant} is obtained under the assumption that $\| \Phi_{x,N} \|_F^2 < \gamma$ holds strictly. 
The best-response maps $\{ \widehat{BR}_{\Phi}^p \}_{p\in\mathcal{P}}$ when this constraint is active, or when $\mathcal{W}$ is an ellipsoid, are solutions to quadratically-constrained quadratic programs (QCQP) and their Lipschitz properties are less intuitive. 

\section{Examples} \label{sec: Examples} 

In the following, we demonstrate the SLS-BRD algorithm in two exemplary problems: 
Decentralised control of an unstable network and management of a competitive market. 
In both problems, we set the initial profile $\sig{K}_0 = (\sig{\Phi_{u,0}}^1, \ldots, \sig{\Phi_{u,0}}^{N_P}) \sig{\Phi_{x,0}}^{-1}$ by projecting the zero-response $\sig{\hat \Phi_{u,0}} = \bm{0}$ into the feasible set.
A SLS-BRD routine is then simulated by having players update their policies through best-responses to the parametrisation of their rivals' policies, assumed to be available. 
Due to numerical limitations, we interrupt the updates whenever the condition $\| \sig{\Phi}_{u,k}^p - \sig{\Phi}_{u,k-1}^p \|_{\ell_2} / \| \sig{\Phi}_{u,k}^p \|_{\ell_2} \leq 10^{-8}$ ($\forall p \in \mathcal{P}$) is satisfied.

\subsection{Stabilisation of a bidirectional chain network} \label{subsec: Bidirectional_Chain}

Consider a game $\mathcal{G}_{\infty}^{\text{LQ}} = \{ \mathcal{P}, \sigset{X}, \{ \sigset{U}^p \}_{p\in\mathcal{P}}, \sigset{W}, \{ J^p \}_{p\in\mathcal{P}} \}$ with players $\mathcal{P} = \{ 1, 2, 3 \}$ operating a chain network of $N_x = 14$ single-state nodes whose dynamics are described by
\begin{equation*}
    A = \begin{bmatrix}
         1     &   0.2  &        &     \\
        -0.2   & \ddots & \ddots &     \\
               & \ddots & \ddots & 0.2 \\
               &        &  -0.2  & 1
    \end{bmatrix}; 
    ~
    \left\{ 
    B^p {=} \begin{bmatrix}
        \bm{0}_{6(p{-}1) \times 2} \\[1.5ex]
              I_2                  \\[0.5ex]
        \bm{0}_{6(3{-}p) \times 2}
    \end{bmatrix} 
    \right\}_{\hspace*{-0.25em} p\in\mathcal{P}},
\end{equation*}
where $A \in \mathbb{R}^{N_x \times N_x}$ and $B^p \in \mathbb{R}^{N_x \times N_u^p}$ with $N_u^p = 2$ ($\forall p$). 
This game has unstable dynamics, since $\| A \|_2 = 1.073 > 1$, however it is stabilisable for each $(A, B^p)$. 
Moreover, the game is subjected to a noise process described by $w_t \sim \mathrm{Uniform}(\mathcal{W})$, $t \in \mathbb{N}$, defined over $\mathcal{W} = \{ w_t \in \mathbb{R}^{N_x} : \| w_t \|_\infty \leq 1 \}$. 
In this problem, players are interested in stabilising the game $\mathcal{G}_{\infty}^{\text{LQ}}$, while minimising their individual objective functionals,
\begin{equation*}
    J^p(\sig{u}^p, \sig{u}^{-p}) = \mathrm{E}\left[ \sum_{t=0}^{\infty} \Big( \| x_t  \|_2^2 + \beta^p\| u^{p}_t \|_2^2 \Big) \right],
\end{equation*}
equivalent to Eq. \eqref{eq: QuadraticCosts} after setting $C^p = [I_{N_x} ~ \bm{0}_{N_x {\times} N_u}]^{\tran}$, $D^{pp} = [\bm{0}_{N_u {\times} N_x} ~ \sqrt{\beta^p}I_{Nu}]^{\tran}$, and $D^{p\tilde{p}} = 0$ for all $\tilde{p}\in\mathcal{P}\backslash\{p\}$. 
The players' actions are subjected to operational constraints, $\sig{u}^p \in U^p(\sig{u}^{-p})$, defined by the constraint sets
\begin{align*}
    \mathcal{X}               &= \mathbb{R}^{N_x};  \\
    \mathcal{U}^p             &= \big\{ u^p_t \in \mathbb{R}^{N_u^p} : \begin{bmatrix} \tfrac{1}{12} I_{N_u^p} \end{bmatrix} u^p_t \preceq \bm{1}_{N_u^p} \big\}; \\
    \mathcal{U}_{\mathcal{G}} &= \textstyle\prod_{p\in\mathcal{P}} \mathbb{R}^{N_u^p},
\end{align*}
enforcing $\| u_t^p \|_{\infty} \leq 12$ for each $t \in \mathbb{N}$ and $p \in \mathcal{P}$ (Remark \ref{rmk: Constraint_SignInvariance}). 
We assume that players design their state-feedback policies, $\sig{K}^{p} = \sig{\Phi_u}^p\sig{\Phi_x}^{-1} \in \sigset{C}^p$, considering a FIR horizon of $N = 50$ and the constraints $( \Phi_{x,n} \in \mathcal{C}_{x,n} )_{n=1}^N$ and $( \Phi_{u,n}^p \in \mathcal{C}_{u,n}^p )_{n=1}^N$,
\begin{align*}
    \mathcal{C}_{x,n}   &= \{ \Phi_{x,n}   \in \mathbb{R}^{N_x   \times N_x} : \Phi_{x,n}   = \texttt{Sp}(A^{n-1}) \odot \Phi_{x,n} \}; \\ 
    \mathcal{C}_{u,n}^p &= \{ \Phi_{u,n}^p \in \mathbb{R}^{N_u^p \times N_x} : \Phi_{u,n}^p = \texttt{Sp}({B^{p}}^{\tran}A^{n-1}) \odot \Phi_{u,n}^p \},
\end{align*}
defined by setting the delay parameters $d_a = d_c = 1$.
Under this setup, $\mathcal{G}_{\infty}^{\text{LQ}}$ belongs to the class of dynamic potential games (DPG, \cite{Zazo2016}) and the (unique) GFNE can be obtained in advance by solving a centralised optimisation problem.

In this experiment, we simulate a GFNE seeking procedure for each value $\beta \in \{ (10,40,10), (2,8,2), (0.4,1.6,0.4) \}$. 
The game $\mathcal{G}_{\infty}^{\text{LQ}}$ is executed alongside the updating of players' policies according to some learning dynamics. 
The players are assumed to seek $\varepsilon$-GFNE policies by adhering to the SLS-BRD routine (Algorithm \ref{alg: BRD_Dynamic_SLS}) using (approximately)best-response maps, $\{ \widehat{BR}_{\Phi}^p \}_{p\in\mathcal{P}}$, with $\gamma = 0.95$. 
The policies are updated simultaneously every $\Delta T = 1$ stage with learning rate $\eta = 1/2$. 

The convergence of the SLS-BRD routine to the fixed-point $\sig{K}^{\star} = \sig{\Phi_u}^{\star}\sig{\Phi_x}^{-1} = (\sig{\Phi_u}^{1^{\star}}, \ldots, \sig{\Phi_u}^{N_P^{\star}})\sig{\Phi_x}^{-1}$ is shown in Figure \ref{fig: NChains_Convergence}. 
The results demonstrate that the players' policies are sufficiently close to the $\epsilon$-GFNE profile after $260$, $370$, and $425$ iterations for each respective weighting configuration. 
In each case, the (soft) FIR constraints are satisfied strictly after the initial profile (that is, $\| \Phi_{x,k,N}^{p} \|_{F}^{2} < 0.95$, $k > 2$, $\forall p \in \mathcal{P}$). 
Moreover, this fixed-point iteration seems to follow the behaviour discussed in Section \ref{sec: BRD_via_SLS_Method}: The convergence improves when players apply stronger penalties to their actions ($\beta = (10, 40, 10)$) when compared to that obtained by weaker control penalties ($\beta = (0.4, 1.6, 0.4)$). 
However, we stress that the Lipschitz constants from Theorem \ref{thm: SLS_BR_LipschitzConstant} do not consider structural constraints ($\sig{S}_x$, $\sig{S}_u^p$) and thus cannot be applied to this experiment. 
Regardless, the convergence is shown to be geometric, indicating that the best-response maps $\widehat{BR}_{\Phi}$ are contractive. 
If not interrupted, and disregarding numerical limitations, the SLS-BRD should continue to approach the fixed-point $\sig{K}^{\star}$ at this rate. 

\begin{figure}[b!] \centering
    \includegraphics[width=\columnwidth]{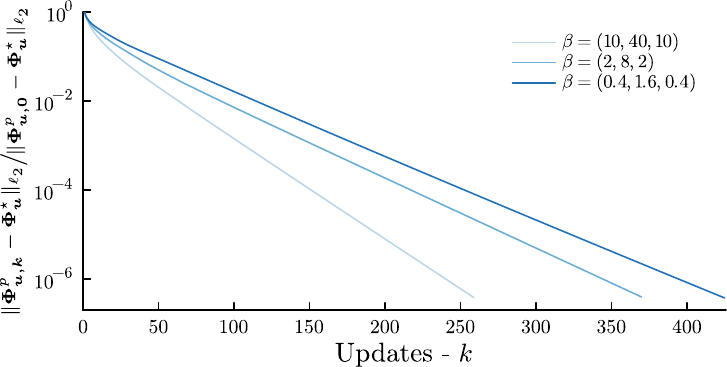}

    \caption{$N_P$-chain game: Convergence of the SLS-BRD routine.}
    \label{fig: NChains_Convergence}
\end{figure}

\begin{figure}[b!] \centering
    \includegraphics[width=\columnwidth]{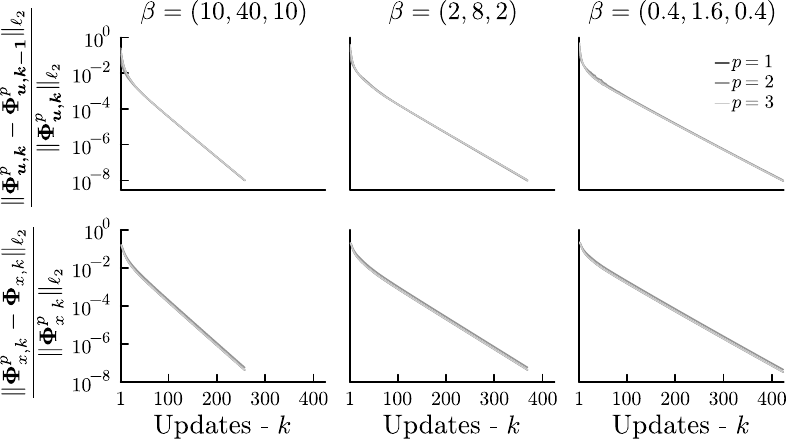}

    \caption{$N_P$-chain game: Relative distance between the local updates $(\sig{\Phi_{u,k}}^p, \sig{\Phi_{u,k-1}}^p)$, top row, and responses $(\sig{\Phi_{x,k}}^p, \sig{\Phi_{x,k}})$, bottom row.}
    \label{fig: NChains_Convergence_Metrics}
\end{figure}

\begin{figure*}[htb!] \centering
    \includegraphics[width=\textwidth]{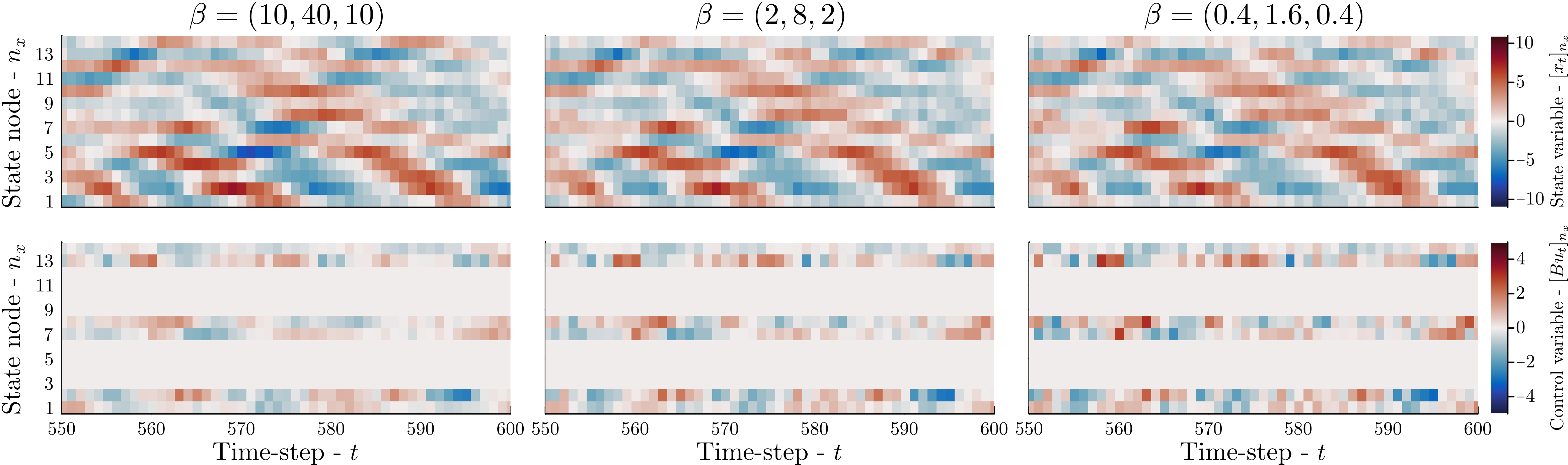}

    \caption{$N_P$-chain game, $t \in (550,600]$: State $\sig{x}$ (top panels) and applied control $\sig{Bu}$ (bottom panels) trajectories for each execution of $\mathcal{G}_{\infty}^{\text{LQ}}$ with $\beta \in \{ (10,40,10), (2,8,2), (0.4,1.6,0.4) \}$. The vertical axis represents each node in the chain-networked system.}
    \label{fig: NChains_Simulation}
\end{figure*}

In Figure \ref{fig: NChains_Convergence_Metrics} (top), we show the relative distance between the individual updates $\{ \sig{\Phi_{u,k}}^{p}, \sig{\Phi_{u,k-1}}^{p} \}_{p\in\mathcal{P}}$ from each player. 
The updates are of similar magnitude for all players $p \in \mathcal{P}$, in each scenario, except for player $p = 3$ which shows a slightly faster convergence. 
In general, these local changes become numerically negligible at a faster rate than the global convergence in Figure \ref{fig: NChains_Convergence}. 
Specifically, when the relative distance between updates approaches the aforementioned threshold of $10^{-8}$, the corresponding policy profile has become closer to the $\epsilon$-GFNE by a factor of $10^{-6}$. 
Finally, we consider the relative distances $\| \sig{\Phi_{x,k}}^p - \sig{\Phi_{x,k}} \|_{\ell_2} / \| \sig{\Phi_{x,k}}^p \|_{\ell_2}$, $k \in \mathbb{N}_+$, between the responses $\{ \sig{\Phi_{x,k}}^p \}_{p\in\mathcal{P}}$ and $\sig{\Phi_{x,k}} = \sig{F_{\Phi}}\sig{\Phi_{u,k}}$ obtained from the system-level parametrisation associated with $\sig{\Phi_{u,k}}$ (Theorem \ref{thm: SystemLevelSynthesis}). 
As shown in Figure \ref{fig: NChains_Convergence_Metrics} (bottom), these distances decrease at a similar rate and are relatively small since the initial stages of the game. 
The policies $\sig{K}_k = \big(\sig{\Phi_{u,k}}^1(\sig{\Phi_{x,k}}^1)^{-1}, \ldots, \sig{\Phi_{u,k}}^{N_P}(\sig{\Phi_{x,k}}^{N_P})^{-1}\big)$ thus approximate $(\sig{\Phi_{u,k}}^1, \ldots, \sig{\Phi_{u,k}}^{N_P}) \sig{\Phi_{x,k}}^{-1}$ as $k \to \infty$ and are thus expected to stabilise the system during this learning dynamics.

The evolution of the game given is displayed in Figure \ref{fig: NChains_Simulation} for the period $t \in (550,600]$, after policy updates are interrupted. 
The results demonstrate that the policies obtained by the SLS-BRD routine are capable of jointly stabilising the networked system, robustly against the random noise. 
These policies are also shown to satisfy the operational constraints: Each player's actions satisfy $\| u_t^p \|_{\infty} \leq 5$, $t \in \mathbb{N}$, in all scenarios. 
The enforcement of this strategy highlights the conservativeness resulting from the robust operational constraints. 
Finally, note that the state- and control-trajectories from operating the system through these policies are similar for all $\beta$ configurations; 
however, the policies with $\beta = (2,8,2)$ and $\beta = (0.4,1.6,0.4)$ achieve a slightly better noise rejection at the expense of more aggressive control actions. 
In this experiment, the choice $\beta = (10, 40, 10)$ seems to be preferable as it converges faster while still achieving satisfactory performance.

\subsection{Price competition in oligopolistic markets} 

Consider a game $\mathcal{G}_{\infty}^{\text{LQ}} {=} \{ \mathcal{P}, \sigset{X}, \{ \sigset{U}^p \}_{p\in\mathcal{P}}, \sigset{W}, \{ J^p \}_{p\in\mathcal{P}} \}$ consisting of a set of companies $\mathcal{P} = \{ 1, 2, 3, 4 \}$ participating in a single-product market. 
Assume that these companies have equivalent production capacities and are able to satisfy the demand for their products. 
The product offered by company $p \in \mathcal{P}$ has a daily local demand $\sig{d^p} = (d^p(t))_{t \in \mathbb{R}_{\ge 0}}$ which evolves according to the continuous-time dynamics
\begin{equation*}
    \tau\dot{d}^p(t) = \underbrace{d_{\text{base}}^p(t) - \textstyle\sum_{\tilde{p}\in\mathcal{P}} \widetilde{B}^{p\tilde{p}} u^{\tilde{p}}(t)}_{\text{linear price-demand curve}} - d^p(t) ,
\end{equation*}
where $\sig{u}^{\tilde{p}} = (u^{\tilde{p}}(t))_{t\in\mathbb{R}_{\ge 0}}$ are price changes around the value at which $\tilde{p} \in \mathcal{P}$ sell its products and $\sig{d_{\text{base}}}^p = (d_{\text{base}}^p(t))_{t \in \mathbb{R}_{\ge 0}}$ is some fluctuating baseline demand. 
Specifically, the baseline demands are of the form $d_{\text{base}}^p(t) = \bar{d}_{base} + v^p(t)$, for all $p \in \mathcal{P}$, given fixed $\bar{d}_{base} = 10$ and noise process $\sig{v^p} = (v^p(t))_{t\in\mathbb{R}_{\geq 0}}$. 
The market parameters $\tau \in \mathbb{R}_{\geq 0}$ and $\widetilde{B}^{p\tilde{p}} \in \mathbb{R}_{\geq 0}$ ($\forall p,\tilde{p} \in \mathcal{P}$) describe how local demands respond to price changes: 
We set $\tau = 1.2$ and sample $B^{p\tilde{p}} \sim \mathrm{Uniform}(0.5, 1.5)$ for all $p,\tilde{p} \in \mathcal{P}$.
In this problem, companies aim at devising pricing policies to stabilise their demands around $\bar{d}_{base}$, which provides a stable profit margin, while satisfying a price-cap regulation enforcing 
\begin{equation*}
    -\bar{u}_{\text{avg}} \le \textstyle\frac{1}{N_P} \textstyle\sum_{p\in\mathcal{P}} u^{p}(t) \le \bar{u}_{\text{avg}}, \quad \bar{u}_{\text{avg}} = 0.5.
\end{equation*}

Define $\sig{x} = (\sig{x}^1, \ldots, \sig{x}^{N_P})$, with $\sig{x}^p = (d^p(t) - \bar{d}_{base})_{t\in\mathbb{R}_{\ge 0}}$, and $\widetilde{B}^p = [\widetilde{B}^{p1} ~ \cdots ~ \widetilde{B}^{pN_P}]$, for all $p \in \mathcal{P}$. 
Considering a zero-order hold of inputs with period $\Delta t = 1/4$ [days], the game $\mathcal{G}_{\infty}^{\text{LQ}}$ can be described by the discrete-time dynamics\footnote{With a slight abuse of notation, we use $t$ to index both the continuous-time ($x(t)$, $t \in \mathbb{R}_{\geq 0}$) and discrete-time ($x_t$, $t \in \mathbb{N}$) signals in this example.}
\begin{equation*}
    x_{t+1} = A x_t + \textstyle\sum_{p\in\mathcal{P}} B^p u^p_t + w_t
\end{equation*}
with $A = \exp(-\tau \Delta t)I_{N_x}$ and $\{ B^p = -\frac{1}{\tau}(A - I_{N_x})\widetilde{B}^p \}_{p \in \mathcal{P}}$, and the noise process $\sig{w} = \big( -\frac{1}{\tau}(A - I_{N_x}) v_t \big)_{t\in\mathbb{N}}$ \cite{Aastrom2013}. 
The companies assume that the baseline demand fluctuations satisfy $w_t \in \mathcal{W} = \{ w_t \in \mathbb{R}^{N_x} : \| w_t \|_{\infty} \le 1 \}$ for all $t \in \mathbb{N}$. 
Under this representation, each player's objective is formulated as solving for a policy which minimises the functional
\begin{equation*}
    J^p(\sig{u}^p, \sig{u}^{-p}) = \mathrm{E}\left[ \sum_{t=0}^{\infty} \Big( \alpha^p\| x_t  \|_2^2 + \beta^p\| u^{p}_t \|_2^2 \Big) \right],
\end{equation*}
given weights $\alpha^p, \beta^p \in \mathbb{R}_{\ge 0}$. 
We sample $\alpha^p \sim \mathrm{Uniform}(5,15)$ and $\beta^p \sim \mathrm{Uniform}(0.3, 0.6)$ for each $p \in \mathcal{P}$. 
The operational constraints, $\sig{u}^p \in U^p(\sig{u}^{-p})$, are defined by the constraint sets
\begin{align*}
    \mathcal{X}               &= \mathbb{R}^{N_x};  \\ 
    \mathcal{U}^p             &= \mathbb{R}^{N_u^p}; \\
    \mathcal{U}_{\mathcal{G}} &= \big\{ u_t \in \textstyle\prod_{p\in\mathcal{P}} \mathbb{R}^{N_u^p} : \begin{bmatrix}
        \tfrac{1}{N_P \cdot \bar{u}_{\text{avg}}}
        \bm{1}_{N_u}^{\tran}
    \end{bmatrix} u_t \leq 1 \big\}.
\end{align*}
We consider that players design their state-feedback policies, $\sig{K}^{p} = \sig{\Phi_u}^p\sig{\Phi_x}^{-1} \in \sigset{C}^p$, with a FIR horizon of $N = 16$ and no structural constraints, that is, $\sig{S}_x = I$ and $\sig{S}_u^p = I$ ($\forall p \in \mathcal{P}$). 
In practice, this implies that companies have perfect information of any demand $\sig{x}^p$ and price $\sig{u}^p$ changes in the market.

As in Section \ref{subsec: Bidirectional_Chain}, we simulate an instance of the game $\mathcal{G}_{\infty}^{\text{LQ}}$ alongside the SLS-BRD routine (Algorithm \ref{alg: BRD_Dynamic_SLS}) with players using (approximately)best-response maps, $\{ \widehat{BR}_{\Phi}^p \}_{p\in\mathcal{P}}$, given $\gamma = 0.95$. 
Policies are updated simultaneously every $\Delta T = 1$ stage with learning rate $\eta = 1/4$. 
Under the above setup, the game $\mathcal{G}_{\infty}^{\text{LQ}}$ is not a potential game and thus a GFNE cannot be easily computed in advance. 
Finally, we remark that solutions to this problem are not unique: Different initial profiles may result in convergence to different equilibrium profiles. 

\begin{figure}[b!] \centering
    \includegraphics[width=\columnwidth]{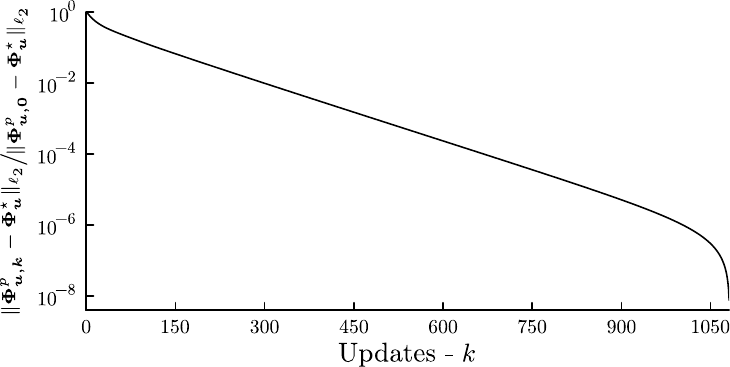}

    \caption{Market game: Convergence of the SLS-BRD routine.}
    \label{fig: Market_Convergence}
\end{figure} 

\begin{figure}[b!] \centering
    \includegraphics[width=\columnwidth]{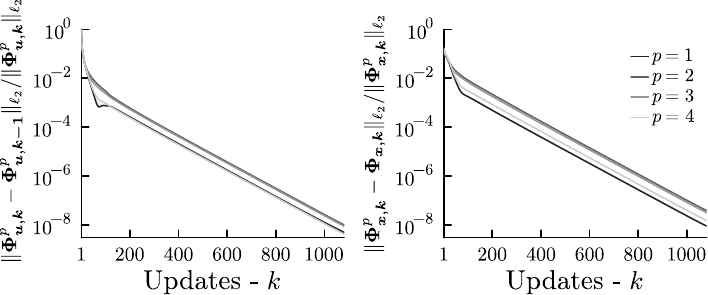}

    \caption{Market game: Relative distance between the local updates $(\sig{\Phi_{u,k}}^p, \sig{\Phi_{u,k-1}}^p)$, left row, and responses $(\sig{\Phi_{x,k}}^p, \sig{\Phi_{x,k}})$, right row.}
    \label{fig: Market_Convergence_Metrics}
\end{figure}

The convergence of the SLS-BRD routine to a fixed-point $\sig{K}^{\star} = \sig{\Phi_u}^{\star}\sig{\Phi_x}^{-1} = (\sig{\Phi_u}^{1^{\star}}, \ldots, \sig{\Phi_u}^{N_P^{\star}})\sig{\Phi_x}^{-1}$ is shown in Figure \ref{fig: Market_Convergence}. 
Since the exact fixed-point to which the routine will converge is not known for this problem, we let $\sig{\Phi_u}^{\star} \approx \sig{\Phi_{u,k_f}}$ for $k_f = 1080$ (when the policy updates are interrupted) and analyse the convergence with respect to this point. 
In this case, the iterates approach the fixed-point mostly at a geometric rate with the policies requiring roughly $1000$ updates (or $250$ days in-game) before changes become numerically negligible. 
We stress that the best-responses $\{ \widehat{BR}_{\Phi}^p \}_{p\in\mathcal{P}}$ cannot be (globally) contractive in this case, as the set of fixed points $\Omega_{\mathcal{G}_{\infty}^{\Phi}}^{\varepsilon}$ is not a singleton. 
Finally, this experiment imply that multi-agent markets might require a half-year of learning dynamics (if policies are updated every $\Delta t = 1/4$ [days]) before converging to an equilibrium.

In Figure \ref{fig: Market_Convergence_Metrics} (left), the relative distances between the individual updates $\{ \sig{\Phi_{u,k}}^{p}, \sig{\Phi_{u,k-1}}^{p} \}_{p\in\mathcal{P}}$ are displayed. 
As in the previous example, the updates are of similar magnitude for all players $p \in \mathcal{P}$ and they become numerically negligible at a faster rate than the global convergence in Figure \ref{fig: Market_Convergence}. 
The convergence of the distance between updates is shown to slow considerably, especially for $p \in \{1, 4\}$, around $k = 50$. 
Thereafter, these relative distances continue to decrease at a steady rate. 
In Figure \ref{fig: Market_Convergence_Metrics} (right), the relative distances $\| \sig{\Phi_{x,k}}^p - \sig{\Phi_{x,k}} \|_{\ell_2} / \| \sig{\Phi_{x,k}}^p \|_{\ell_2}$, $k \in \mathbb{N}_+$, between responses $\{ \sig{\Phi_{x,k}}^p \}_{p\in\mathcal{P}}$ and $\sig{\Phi_{x,k}} = \sig{F_{\Phi}}\sig{\Phi_{u,k}}$ are shown. 
As before, these distances decrease at a similar rate and are relatively small since the initial stages of the game. 

The evolution of the game given each player's actions is displayed Figure \ref{fig: Market_Simulation} for the \textit{in-game} period $t \in (280,420]$ days, after the policy updates have been interrupted. 
We compare the performance of the policy profile $\sig{K}^{\star} = \sig{\Phi_u}^{\star}\sig{\Phi_x}^{-1}$ with the evolution obtained by the open-loop operation $u(t) = 0$. 
During this period, we simulate a worst-case fluctuation on the baseline demand for the companies' product by defining
\begin{equation*}
    w(t) = \begin{cases}
        (\phantom{-}1, \phantom{-}1, \phantom{-}1, \phantom{-}1) & \text{for}~t \in [285, 299]; \\
        (\phantom{-}1,           -1, \phantom{-}1,           -1) & \text{for}~t \in [313, 327]; \\
        (          -1, \phantom{-}1,           -1, \phantom{-}1) & \text{for}~t \in [341, 355]; \\
        (\phantom{-}1, \phantom{-}1,           -1,           -1) & \text{for}~t \in [369, 383]; \\
        (          -1,           -1,           -1,           -1) & \text{for}~t \in [397, 411], 
    \end{cases}
\end{equation*}
and $w(t) = 0$ otherwise.
The results show that the policy profile obtained by the SLS-BRD routine allows the companies to efficiently respond to changes in their local demands. 
In general, the players coordinate price changes to alleviate the deviations from the baseline demand caused by the disturbances, while still satisfying the price-cap constraint $\frac{1}{N_P} |\sum_{p\in\mathcal{P}} u^{p}(t)| \le 0.5$. 
The best performance is observed for the companies $p \in \{3, 4\}$, whereas $p = 1$ behaves noticeably worse than all players (especially for $w(t) = (1, -1, 1, -1)$ and $w(t) = (-1, 1, -1, 1)$, when the open-loop operation attains better results). 
This highlights the fact that fixed-points obtained through the SLS-BRD routine are not necessarily \textit{admissible} GFNE, and might be unfavourable for a subset of players. 
Finally, we note that these policies are still not able to completely reject the effect of the noise: 
Achieving zero-offset requires incorporating integral action into the feedback policies.

\begin{figure*}[htb!] \centering
    \includegraphics[width=\textwidth]{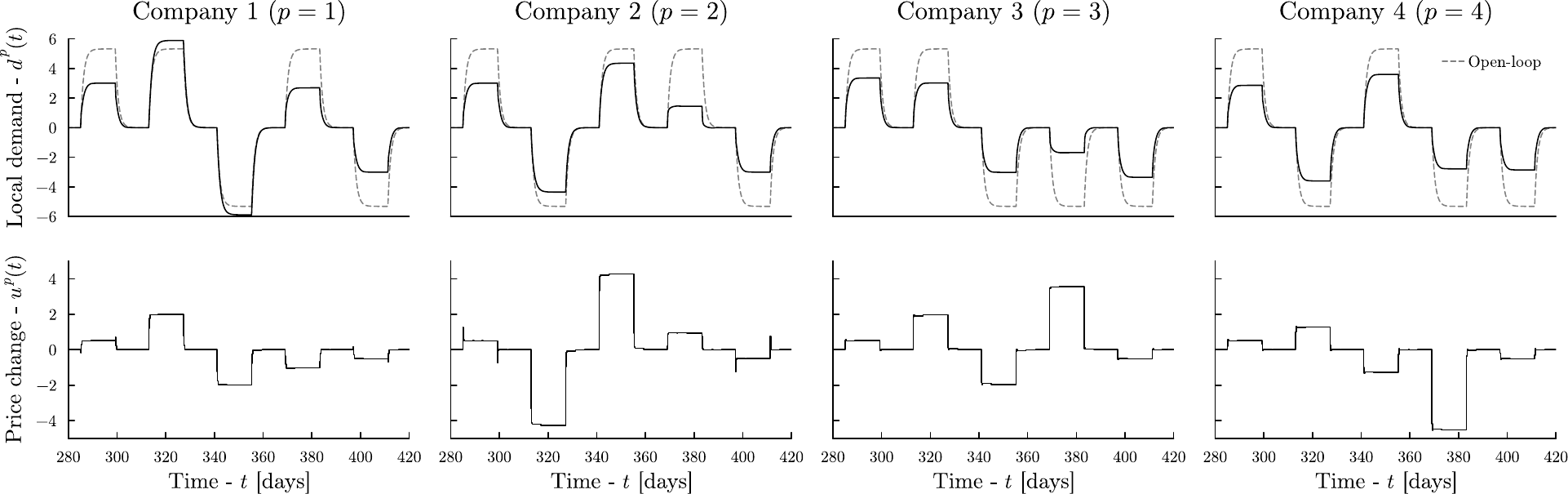}

    \caption{Market game, $t \in (280, 420]$ days: State $\sig{x}$ (top panels) and applied control $\sig{Bu}$ (bottom panels) trajectories for an execution of the game $\mathcal{G}_{\infty}^{\text{LQ}}$. The dashed lines refer to the state-trajectories resulting from an open-loop operation of the market with $u(t) = 0$ for all $t \in \mathbb{R}_{\geq 0}$.}
    \label{fig: Market_Simulation}
\end{figure*}

\section{Concluding remarks} \label{sec: Concluding_Remarks}

This work presents the SLS-BRD, an algorithm for generalised feedback Nash equilibrium seeking in $N_P$-player noncooperative games. 
The method is based on the best-response dynamics class of algorithms for Nash equilibrium seeking and consists of players updating and announcing a parametrisation of their policies until converging to a fixed-point. 
Our approach leverages the System Level Synthesis framework to formulate each player's best-response map as the solution to robust finite-horizon optimal control problems. 
Because not updating control actions explicitly, this learning dynamics can be performed alongside the execution of the game. 
This framework also benefits the SLS-BRD by allowing richer information patterns to be enforced directly at the synthesis level. 
Using results from operator theory, we then established sufficient conditions for this procedure to converge to a generalised feedback Nash equilibrium. 
After the theoretical aspects are discussed, the algorithm is showcased on exemplary problems on the noncooperative control of multi-agent systems.

\section*{References}
\bibliographystyle{ieeetr}

\end{document}


\maketitle

\begin{abstract} 
    This document provides supplementary material for the article ``SLS-BRD: A system-level approach to seeking generalised feedback Nash equilibria''. In Section \ref{sec: App_Convergence}, we complement the main text with a discussion on the convergence of an important class of generalised games. Section \ref{sec: App_Robust_Constraints} provides a detailed derivation of the robust operational constraints given in the main text. Section \ref{sec: App_From_BR_to_BRPhi} provides a detailed derivation of the system-level best-response maps presented in the main text. Finally, Section \ref{sec: App_Proofs} proves those propositions in the main text for which no demonstration or reference was given.
\end{abstract}

\section{Example: Convergence of the BRD for a specific class of generalised games} \label{sec: App_Convergence}

In the following, we discuss a problem for which the best-response map $BR$ is expansive (that is, with Lipschitz constant $L_{BR} > 1$) and yet a specific design of the learning rate ensures convergence of the BRD algorithm. Consider a static game $\mathcal{G} \coloneqq (\mathcal{P},\{ \mathcal{S}^p \}_{p\in\mathcal{P}},\{ L^p \}_{p\in\mathcal{P}})$ with $N_P > 2$ players, each having the strategy set $\mathcal{S}^p = \mathbb{R}_{\geq 0}$ and objective $L^p(s^p,s^{-p}) = (s^p - 1)^2$. Moreover, let $\mathcal{S}_{\mathcal{G}} = \{ s \in \mathbb{R}^{N_P} : \sum_{p\in\mathcal{P}} s^p \leq 1 \}$ be a constraint set shared by all players. In this case, we have that
\begin{align}
    BR^p(s^{-p}) 
        &= \textstyle \argmin_{s^p} \left\{ (s^p - 1)^2 : 0 \leq s^p \leq 1 - \textstyle\sum_{\tilde{p}\in\mathcal{P}\backslash\{p\}} s^{\tilde{p}} \right\}; \nonumber\\
        &= 1 - \textstyle\sum_{\tilde{p}\in\mathcal{P}\backslash\{p\}} s^{\tilde{p}},
\end{align}
since $\sum_{\tilde{p}\in\mathcal{P}\backslash\{p\}} s^{\tilde{p}} > 0$. Alternatively, this operator can be expressed through the affine operator \begin{equation}
    BR^p(s^{-p}) = e_p\big( v + (I_{N_P} - vv^{\tran})(s^p,s^{-p}) \big)
\end{equation}
where $v = \bm{1}_{N_P}$ and $e_p$ is the $p$-th standard basis vector. In turn, this implies that $BR(s) = v + (I_{N_P} - vv^{\tran})s$. As $BR$ is an affine operator, its tightest Lipschitz constant is the spectral norm $L_{BR} = \| I_{N_P} - vv^{\tran} \|_{2} = \sigma_{\max}(I_{N_P} - vv^{\tran})$. Using the Cauchy's formula for the determinant of a rank-one perturbation \cite{Horn2012}, we have the characteristic polynomial
\begin{align*}
    \det\left((1-\lambda)I_{N_P} - vv^{\tran}\right) = 0 
        &\quad\Rightarrow\quad \big(1 - {v^{\tran}v}/({1-\lambda})\big)(1-\lambda)^{N_P} = 0, \\
        &\quad\Rightarrow\quad \big((1 - N_P) - \lambda\big)(1-\lambda)^{N_P} = 0,
\end{align*}
and thus the spectrum $\lambda(I_{N_P} - vv^{\tran}) = \{ 1 {-} N_P,~ 1 \}$. Since this matrix is symmetric, its singular values correspond to the absolute value of its eigenvalues. Therefore, the best-response mapping $BR$ has a Lipschitz constant of $L_{BR} = | 1 - N_P | = N_P - 1$ (since $N_P > 1$) and is expansive for all games with $N_P > 2$. Now, consider the BRD update rule $T = (1 {-} \eta)I + \eta BR$. Letting $\eta = \tilde{\eta}\alpha$ for some $\tilde{\eta}, \alpha \in (0,1)$, this mapping can be expressed as
\begin{align}
    T(s^k) 
    &= (1 - \tilde{\eta}\alpha)s^k + \tilde{\eta}\alpha\big(v + (I_{N_P} - vv^{\tran})s^k\big) \nonumber\\
    &= (1 - \tilde{\eta})s^k       + \tilde{\eta}\big(1-\alpha)s^k + \tilde{\eta}(\alpha v + \alpha(I_{N_P} - vv^{\tran})s^k\big) \nonumber\\ 
    &= (1 - \tilde{\eta})s^k       + \tilde{\eta}\big(\alpha v + (I_{N_P} - \alpha vv^{\tran})s^k\big),
\end{align}
and thus $T = (1 - \tilde{\eta})I_{N_P} + \tilde{\eta}BR_{\alpha}$ with $\tilde{\eta} \in (0,1)$ and the affine operator $BR_{\alpha}(s) = \alpha v + (I_{N_P} - \alpha vv^{\tran})s$. Using the same arguments as above, we have $\lambda(I_{N_P} - \alpha vv^{\tran}) = \{ 1 {-} \alpha N_P,~ 1 \}$, which leads to $L_{BR_{\alpha}} = 1$ (that is, $BR_{\alpha}$ is nonexpansive) for all $\alpha \leq (2/N_P)$. Therefore, the update rule $T$ is an averaged operator when the learning rate satisfy $\eta \in (0, 2/N_P)$ and the BRD iteration $s^{k+1} = T(s^k)$ converges monotonically to a generalised Nash equilibrium in $\Omega_{\mathcal{G}}$, which need not be unique \cite{Ryu2022}.

\vskip1em
\begin{remark}
    The discussion above extends naturally to problems with strategy spaces $\mathcal{S}^p = \mathbb{R}^{N_s^p}$ and generalised constraints $\mathcal{S}_{\mathcal{G}} = \{ s \in \prod_{p\in\mathcal{P}} \mathbb{R}^{N_s^p} : G s^p \preceq \bm{1}_{N_{\mathcal{S}_{\mathcal{G}}}} \}$, albeit with more involving calculations. In general, a careful choice of $\eta \in (0,1)$ might be necessary when $G = \beta[I_{N_s^1} \cdots I_{N_s^{N_P}}]$, $\beta > 0$, becomes an active constraint for each player. In such cases, the feasible learning rates depend on $N_P$ and thus the BRD can display slow convergence in games with a large number of players.
\end{remark}

\clearpage\section{Robust operational constraints: Derivation and remarks} \label{sec: App_Robust_Constraints}

In the following, we consider any set of linear inequality constraints
\begin{equation} \label{eq: OperationalConstraint_Generic}
    [H]_i (\Phi * w)_n \leq 1, \quad \forall n \in \mathbb{N}, \quad i = 1, \ldots, N_H,
\end{equation}
given a matrix $H \in \mathbb{R}^{N_H \times N_x}$, the kernel $\sig{\Phi} = (\Phi_n)_{n=1}^N \in \ell_{2}^{N_x \times N_x}$ of a strictly causal operator, and process $\sig{w} = (w_n)_{n\in\mathbb{N}} \in \sigset{W}$. 
Moreover, we assume that the components of $\sig{w}$ are known to satisfy 
    $w_n \in \mathcal{W} = \{ \overline{w} + P\zeta : \| \zeta \|_q \leq 1 \}, ~\forall n \in \mathbb{N}$,
given a vector $\overline{w} \in \mathbb{R}^{N_x}$, a matrix $P \in \mathbb{R}^{N_x \times N_{\zeta}}$, and the $\ell_q$-norm $\|\cdot\|_q$. 
Using the fact that $\sig{\Phi}$ is a FIR mapping, the operation $(\Phi * w)_n$ can be expressed as the matrix multiplication
\begin{align*}
    (\Phi * w)_n 
        = \textstyle\sum_{n'=1}^{N} \Phi_{n'} w_{n-n'}
        = \bar{\Phi} w_n^N,
\end{align*}
with $\bar{\Phi} = [\Phi_1 ~ \cdots ~ \Phi_N]$ and $w_n^N = (w_{n-1}, \ldots, w_{n-N}) \in \underbrace{\mathcal{W} \times \cdots \times \mathcal{W}}_{N \text{ times}} = \mathcal{W}^N$. Now, consider that  
\begin{align*}
    \mathcal{W}^N &= \left\{
            \begin{bmatrix}
                \overline{w} \\ \vdots \\ \overline{w}
            \end{bmatrix} + \begin{bmatrix}
                P & & \\ & \ddots & \\ & & P
            \end{bmatrix}
            \begin{bmatrix}
                \zeta_1 \\ \vdots \\ \zeta_N
            \end{bmatrix}
            ~:~
            \| \zeta_{n'} \|_q \leq 1, ~ n' = 1, \ldots, N
        \right\} \\
        &\subseteq \left\{ (\bm{1}_N \otimes \overline{w}) + (I_N \otimes P)\zeta ~:~ \| \zeta \|_q \leq N^{1/q} \right\}, \\
        &= \left\{ (\bm{1}_N \otimes \overline{w}) + N^{1/q}(I_N \otimes P)\zeta ~:~ \| \zeta \|_q \leq 1 \right\},
\end{align*}
where in the last steps we use $\zeta = (\zeta_1, \ldots, \zeta_N)$ and the fact that $\| \zeta \|_q^q =  \sum_{n'=0}^N \| \zeta_{n'} \|_q^q \leq \sum_{n'=0}^N 1^q = N$. Considering that Eq. \eqref{eq: OperationalConstraint_Generic} must be satisfied for all possible realisations of $\sig{w}\in\sigset{W}$, these constraint are equivalent to enforcing
\begin{align}
    \sup\{ [H]_i(\Phi * w)_n : \sig{w}\in\sigset{W} \} \leq 1 
        &\quad\Rightarrow\quad \sup\{ [H]_i \bar{\Phi} w_n^N : w_n^N \in \mathcal{W}^N \} \leq 1 \nonumber\\
        &\quad\Rightarrow\quad [H]_i \bar{\Phi} (\bm{1}_N \otimes \overline{w}) + \sup\{ N^{1/q} [H]_i \bar{\Phi} (I_N \otimes P)\zeta ~:~ \| \zeta \|_q \leq 1 \} \leq 1 \nonumber\\
        &\quad\Rightarrow\quad [H]_i \bar{\Phi} (\bm{1}_N \otimes \overline{w}) + N^{1/q} \| (I_N \otimes P^{\tran})([H]_i \bar{\Phi})^{\tran} \|_q^* \leq 1 \label{eq: OperationalConstraint_Generic_Robust}
\end{align}
for every $i = 1, \ldots, N_H$. In summary, enforcing Eq. \eqref{eq: OperationalConstraint_Generic_Robust} leads to the  Eq. \eqref{eq: OperationalConstraint_Generic} being enforced for all $\sig{w}\in\sigset{W}$ and $n\in\mathbb{N}$. 

Being common in practical applications, two important cases are worth highlighting:
\begin{itemize}
    \item $\mathcal{W}$ is an ellipsoid centred at zero:
        $\mathcal{W} = \{ P \zeta : \| \zeta \|_2 \leq 1 \}$,
    given a $P \in \mathbb{S}_{++}^{N_x}$. This refer to noise processes with uniformly bounded energy. For such cases, Eq. \eqref{eq: OperationalConstraint_Generic} can be enforced by the second-order conic (SOC) constraints
    \begin{equation*} 
        \sqrt{N} \| (I_N \otimes P^{\tran})([H]_i \bar{\Phi})^{\tran} \|_2 \leq 1, \quad i = 1, \ldots, N_H;
    \end{equation*}
    
    \item $\mathcal{W}$ is a polyhedron, symmetric around zero:
        $\mathcal{W} = \{ P \zeta : \| \zeta \|_\infty \leq 1 \},$
    given a full-rank $P \in \mathbb{R}^{N_x \times N_{\zeta}}$. This refer to noise processes with uniformly bounded intensity. For such cases, Eq. \eqref{eq: OperationalConstraint_Generic} can be enforced by the first-order conic constraints
    \begin{equation*} 
        \| (I_N \otimes P^{\tran})([H]_i \bar{\Phi})^{\tran} \|_1 \leq 1, \quad i = 1, \ldots, N_H.
    \end{equation*}
\end{itemize}

\clearpage\section{From $BR^p$ to $BR_{\Phi}^p$: Detailed derivation} \label{sec: App_From_BR_to_BRPhi}

In this section, we provide a detailed derivation of the (system-level) best-response mapping $BR_{\Phi}^p$ ($p \in \mathcal{P}$) from the original best-response $BR^p$.
We consider the specific class of $\mathcal{G}_{\infty}^{\text{LQ}}$ discussed in Section III of the main manuscript.

For this class of linear-quadratic games, the best-response maps $BR^p(\sig{K}^{-p})$, $p \in \mathcal{P}$, are of the form
\begin{subequations}
    \begin{align}
         \underset{\sig{K}^p}{\text{minimize}} \quad 
            & \mathrm{E}\Bigg[ \sum_{t=0}^{\infty} \Big( \| C^p x_t  \|_2^2 + \big\| \textstyle\sum_{\tilde{p}=1}^{N_P} D^{p\tilde{p}} u^{\tilde{p}}_t \big\|_2^2 \Big) \Bigg]                                                                                              \label{eq: AP_Best_Response_GFNE_A} \\[1ex]
        {\text{subject to}} \quad
            & x_{t{+}1} = A x_{t} + \textstyle\sum_{\tilde{p}=1}^{N_P} B^{\tilde{p}} u^{\tilde{p}}_{t} + w_t,                                                                       \hspace*{ 7em} t = 0, 1, \ldots,  \hspace*{-4.4em}                               \label{eq: AP_Best_Response_GFNE_B}\\
            & u^{\tilde{p}}_t = (K^{\tilde{p}} x)_t,                                                                                                                                \hspace*{15em} t = 0, 1, \ldots, ~\tilde{p} = 1, \ldots, N_P, \hspace*{-13.5em}       \label{eq: AP_Best_Response_GFNE_D}\\ 
            & G_x x_t \preceq \bm{1}_{N_{\mathcal{X}}},~~  G_u^p u^p_t \preceq \bm{1}_{N_{\mathcal{U}^p}},~~ G_{\mathcal{G}} u_t \preceq \bm{1}_{N_{\mathcal{U}_{\mathcal{G}}}},    \hspace*{ 1em} t = 0, 1, \ldots, \hspace*{-0.4em}                                      \label{eq: AP_Best_Response_GFNE_C}\\ 
            & \sig{K}^p \in \sigset{C}^p,                                                                                                                                                                                                                                   \label{eq: AP_Best_Response_GFNE_E} 
        \end{align} \label{eq: AP_Best_Response_GFNE}%
\end{subequations}
where we remark that $\sigset{C}^p$ incorporate the constraint that any solution $\bm{K}^p$, given $\bm{K}^{-p}$, is stabilising (so that Eq. \eqref{eq: AP_Best_Response_GFNE_A} converges). 
Note that the objective is equivalent to $\mathrm{E}\| C^p \bm{x} + \textstyle\sum_{\tilde{p}=1}^{N_P} D^{p\tilde{p}} \bm{u}^{\tilde{p}} \|_{\ell_2}^2$. 
In the frequency-domain, we have the equivalent problem
\begin{subequations}
    \begin{align}
         \underset{\sig{\hat K}^p}{\text{minimize}} \quad 
            & \mathrm{E}\| C^p \bm{\hat x} + \textstyle\sum_{\tilde{p}=1}^{N_P} D^{p\tilde{p}} \bm{\hat u}^{\tilde{p}} \|_{\ell_2}^2                                                                                                                        \label{eq: AP_Best_Response_GFNE_Step1_A}\\[1ex]
        {\text{subject to}} \quad
            & z\bm{\hat x} = A \bm{\hat x} + \textstyle\sum_{\tilde{p}=1}^{N_P} B^{\tilde{p}} \bm{\hat u}^{\tilde{p}} + \bm{\hat w},                                                                                                                        \label{eq: AP_Best_Response_GFNE_Step1_B}\\
            & \bm{\hat u}^{\tilde{p}} = \bm{\hat K}^{\tilde{p}} \bm{\hat x},                                                                                                              \hspace*{16.8em} \tilde{p} = 1, \ldots, N_P, \hspace*{-13.5em}    \label{eq: AP_Best_Response_GFNE_Step1_D}\\ 
            & G_x {x}_n \preceq \bm{1}_{N_{\mathcal{X}}},~~  G_u^p {u}^p_n \preceq \bm{1}_{N_{\mathcal{U}^p}},~~ G_{\mathcal{G}} {u}_n \preceq \bm{1}_{N_{\mathcal{U}_{\mathcal{G}}}},    \hspace*{1.4em} n = 0, 1, \ldots, \hspace*{-0.4em}                \label{eq: AP_Best_Response_GFNE_Step1_C}\\ 
            & \mathbb{Z}^{-1}[\sig{\hat K}^p] \in \sigset{C}^p,                                                                                                                                                                                             \label{eq: AP_Best_Response_GFNE_Step1_E}
        \end{align} \label{eq: AP_Best_Response_GFNE_Step1}%
\end{subequations}
obtained by using the Parseval's relation on the objective and applying the $\mathbb{Z}$-transform on the equality constraints with $\bm{\hat x} = \sum_{n=0}^{\infty}\frac{1}{z^n}x_n$ and $\bm{\hat u}^{\tilde{p}} = \sum_{n=0}^{\infty}\frac{1}{z^n}u^{\tilde{p}}_n$ \cite{Zhou1998,Oppenheim2010}. 
After some algebra, this problem can be rewritten as
\begin{subequations}
    \begin{align}
         \underset{\sig{\hat K}^p}{\text{minimize}} \quad 
            & \mathrm{E}\| C^p \bm{\hat x} + \textstyle\sum_{\tilde{p}=1}^{N_P} D^{p\tilde{p}} \bm{\hat u}^{\tilde{p}} \|_{\ell_2}^2                                                                                                                     \label{eq: AP_Best_Response_GFNE_Step2_A}\\[1ex]
        {\text{subject to}} \quad
            & \bm{\hat x} = (zI - A - \textstyle\sum_{\tilde{p}=1}^{N_P} B^{\tilde{p}} \sig{\hat K}^{\tilde{p}})^{-1}\bm{\hat w},                                                                                                                        \label{eq: AP_Best_Response_GFNE_Step2_B}\\
            & \bm{\hat u}^{\tilde{p}} = \bm{\hat K}^{\tilde{p}} (zI - A - \textstyle\sum_{\check{p}=1}^{N_P} B^{\check{p}} \sig{\hat K}^{\check{p}})^{-1}\bm{\hat w},                     \hspace*{  5em} \tilde{p} = 1, \ldots, N_P, \hspace*{-2em}     \label{eq: AP_Best_Response_GFNE_Step2_C}\\ 
            & G_x {x}_n \preceq \bm{1}_{N_{\mathcal{X}}},~~  G_u^p {u}^p_n \preceq \bm{1}_{N_{\mathcal{U}^p}},~~ G_{\mathcal{G}} {u}_n \preceq \bm{1}_{N_{\mathcal{U}_{\mathcal{G}}}},    \hspace*{1.4em} n = 0, 1, \ldots, \hspace*{-2em}               \label{eq: AP_Best_Response_GFNE_Step2_D}\\ 
            & \mathbb{Z}^{-1}[\sig{\hat K}^p] \in \sigset{C}^p.                                                                                                                                                                                          \label{eq: AP_Best_Response_GFNE_Step2_E} 
        \end{align} \label{eq: AP_Best_Response_GFNE_Step2}%
\end{subequations}
Letting $\bm{\hat \Phi_x} = (zI - A - \textstyle\sum_{\tilde{p}=1}^{N_P} B^{\tilde{p}} \sig{\hat K}^{\tilde{p}})^{-1}$ and $\bm{\hat \Phi_u}^{\tilde{p}} = \bm{\hat K}^{\tilde{p}}\bm{\hat \Phi_x}$ ($\forall\tilde{p} \in \mathcal{P}$), we can eliminate the constraints Eq. \eqref{eq: AP_Best_Response_GFNE_Step2_B}--\eqref{eq: AP_Best_Response_GFNE_Step2_C} by substituting $\bm{\hat x} = \bm{\hat \Phi_x} \bm{\hat w}$ and $\bm{\hat u}^{\tilde{p}} = \bm{\hat \Phi_u}^{\tilde{p}} \bm{\hat w}$.
In this case, and noting that $\bm{\hat K}^{\tilde{p}} = \bm{\hat \Phi_u}^{\tilde{p}} \bm{\hat \Phi_x}^{-1}$, $x_n = (\Phi_x * w)_n$ and $u_n^{\tilde{p}} = (\Phi_u^{\tilde{p}} * w)_n$, we can reformulate Problem \eqref{eq: AP_Best_Response_GFNE_Step2} explicitly in terms of the system level responses $\{ \bm{\hat \Phi_x}, \bm{\hat \Phi_u}^1, \ldots, \bm{\hat \Phi_u}^{N_P} \}$ as
\begin{subequations}
    \begin{align}
         \underset{\bm{\hat \Phi_x}, \bm{\hat \Phi_u}^{p}}{\text{minimize}} \quad 
            & \mathrm{E}\| C^p \bm{\hat \Phi_x} \bm{\hat w} + \textstyle\sum_{\tilde{p}=1}^{N_P} D^{p\tilde{p}} \bm{\hat \Phi_u}^{\tilde{p}} \bm{\hat w} \|_{\ell_2}^2                                                                                              \label{eq: AP_Best_Response_GFNE_Step3_A} \\[1ex]
        {\text{subject to}} \quad
            & G_x (\Phi_x * w)_n \preceq \bm{1}_{N_{\mathcal{X}}},~~  G_u^p (\Phi_u^p * w)_n \preceq \bm{1}_{N_{\mathcal{U}^p}},~~ G_{\mathcal{G}} (\Phi_u * w)_n \preceq \bm{1}_{N_{\mathcal{U}_{\mathcal{G}}}},    \hspace*{1em} n = 0, 1, \ldots, \hspace*{-1em} \label{eq: AP_Best_Response_GFNE_Step3_B} \\ 
            & \mathbb{Z}^{-1}[\bm{\hat \Phi_u}^{p} \bm{\hat \Phi_x}^{-1}] \in \sigset{C}^p.                                                                                                                                                                         \label{eq: AP_Best_Response_GFNE_Step3_C} 
        \end{align} \label{eq: AP_Best_Response_GFNE_Step3}%
\end{subequations}
Again, the solutions to Problem \eqref{eq: AP_Best_Response_GFNE} must be stabilising policies and they are related to the solutions to Problem \eqref{eq: AP_Best_Response_GFNE_Step3} through $\bm{\hat K}^p = \bm{\hat \Phi_u}^p \bm{\hat \Phi_x}^{-1}$.
Using the system level parametrisation theorem (Theorem 1 of the main manuscript), we have that the responses generated by a stabilising policy must satisfy $z \bm{\hat \Phi_x} = I + A \bm{\hat \Phi_x} + \textstyle\sum_{\tilde{p}=1}^{N_P} B^{\tilde{p}} \bm{\hat \Phi_u}^{\tilde{p}}$. 
We can thus reformulate the Problem \eqref{eq: AP_Best_Response_GFNE_Step3} as 
\begin{subequations}
    \begin{align}
         \underset{\bm{\hat \Phi_u}^{p}}{\text{minimize}} \quad 
            & \mathrm{E}\| C^p \bm{\hat \Phi_x} \bm{\hat w} + \textstyle\sum_{\tilde{p}=1}^{N_P} D^{p\tilde{p}} \bm{\hat \Phi_u}^{\tilde{p}} \bm{\hat w} \|_{\ell_2}^2                                                                                                     \label{eq: AP_Best_Response_GFNE_Step4_A} \\[1ex]
        {\text{subject to}} \quad
            & z \bm{\hat \Phi_x} = I + A \bm{\hat \Phi_x} + \textstyle\sum_{\tilde{p}=1}^{N_P} B^{\tilde{p}} \bm{\hat \Phi_u}^{\tilde{p}}, \\
            & G_x (\Phi_x * w)_n \preceq \bm{1}_{N_{\mathcal{X}}},~~  G_u^p (\Phi_u^p * w)_n \preceq \bm{1}_{N_{\mathcal{U}^p}},~~ G_{\mathcal{G}} (\Phi_u * w)_n \preceq \bm{1}_{N_{\mathcal{U}_{\mathcal{G}}}},    \hspace*{1.4em} n = 0, 1, \ldots, \hspace*{-0.4em}   \label{eq: AP_Best_Response_GFNE_Step4_B} \\ 
            & \mathbb{Z}^{-1}[\bm{\hat \Phi_x}] \in \sigset{C}_x,~ \mathbb{Z}^{-1}[\bm{\hat \Phi_u}^{p}] \in \sigset{C}_u^p,                                                                                                                                              \label{eq: AP_Best_Response_GFNE_Step4_C} 
        \end{align} \label{eq: AP_Best_Response_GFNE_Step4}%
\end{subequations}
where we explicitly account for the stabilisability constraint from $\sigset{C}^p$ and use the sets $(\sigset{C}_x, \sigset{C}_u^p)$ as a proxy to the remaining structural constraints.
Finally, we convert the problem back to the time-domain (with $n$ indexing spectral factors or \textit{lags}),
\begin{subequations}
    \begin{align}
         \underset{\bm{\Phi_u}^{p}}{\text{minimize}} \quad 
            & \mathrm{E}\Bigg[ \sum_{n=0}^{\infty} \Big( \| C^p (\Phi_x * w)_n  \|_2^2 + \big\| \textstyle\sum_{\tilde{p}=1}^{N_P} D^{p\tilde{p}} (\Phi_u^{\tilde{p}} * w)_n \big\|_2^2 \Big) \Bigg]                                                                    \label{eq: AP_Best_Response_GFNE_Step5_A}\\[1ex]
        {\text{subject to}} \quad
            & \Phi_{x,n+1} = A \Phi_{x,n} + \textstyle\sum_{\tilde{p}=1}^{N_P} B^{\tilde{p}} \Phi_{u,n}^{\tilde{p}}, \quad \Phi_{x,1} = I_{N_x},                                                                      \hspace*{10em} n = 0, 1, \ldots, \hspace*{-2em}   \label{eq: AP_Best_Response_GFNE_Step5_B}\\
            & G_x (\Phi_x * w)_n \preceq \bm{1}_{N_{\mathcal{X}}},~~  G_u^p (\Phi_u^p * w)_n \preceq \bm{1}_{N_{\mathcal{U}^p}},~~ G_{\mathcal{G}} (\Phi_u * w)_n \preceq \bm{1}_{N_{\mathcal{U}_{\mathcal{G}}}},     \hspace*{ 1em} n = 0, 1, \ldots, \hspace*{-2em}   \label{eq: AP_Best_Response_GFNE_Step5_C} \\ 
            & \bm{\Phi_x} \in \sigset{C}_x,~ \bm{\Phi_u}^p \in \sigset{C}_u^p.                                                                                                                                                                                          \label{eq: AP_Best_Response_GFNE_Step5_D} 
        \end{align} \label{eq: AP_Best_Response_GFNE_Step5}%
\end{subequations}
The (system-level) best-response map $BR_{\Phi}^p(\sig{\Phi_u}^{-p})$ refers to the solutions to the Problem \eqref{eq: AP_Best_Response_GFNE_Step5}, which are equivalent to the best-response map $BR^p(\bm{K}^{-p})$ (i.e., the solutions to Problem \ref{eq: AP_Best_Response_GFNE}) through the relation $\bm{\hat K}^p = \bm{\hat \Phi_u}^p \bm{\hat \Phi_x}^{-1}$ for the optimal policy $\bm{K}^p \in BR^p(\bm{K}^{-p})$ and corresponding optimal system level response $\bm{\Phi_u}^p \in BR_{\Phi}^p(\bm{\Phi_u}^{-p})$.

The best-response map $BR_{\Phi}^p(\sig{\Phi_u}^{-p})$ described in Problem \eqref{eq: AP_Best_Response_GFNE_Step5} can still be further manipulated to deal with the random process $\bm{w}$ appearing in the objective and constraint functions.
For the objective function, consider
\begin{equation} \label{eq: AP_SLS_Objective_Time}
    J^p(\bm{\Phi_u}^p, \bm{\Phi_u}^{-p}) = \sum_{t=0}^{\infty} \Big( \mathrm{E} \| C^p (\Phi_x * w)_n  \|_F^2 + \mathrm{E} \big\| D^p (\Phi_u * w)_n \big\|_F^2 \Big),
\end{equation}
where we let $D^p = [D^{p1} \cdots D^{pN_P}]$ and $\Phi_{u,n} = (\Phi_{u,n}^1, \ldots, \Phi_{u,n}^{N_P})$ to simplify notation, and we use the linearity of the $\mathrm{E}(\cdot)$ operator and the fact that $\| z \|_F^2 = \| z \|_2^2$ for any $z \in \mathbb{R}^{N_z}$. 
Define the vector-valued signals $\bm{z_x} = C^p\bm{\Phi_x} * \bm{w}$ and $\bm{z_u} = D^{p}\bm{\Phi_u} * \bm{w}$.
Using the definition of $\|\cdot\|_F$ and the linear and cyclic properties of the $\mathrm{Tr}(\cdot)$ operator, the terms inside Eq. \eqref{eq: AP_SLS_Objective_Time} are
\begin{equation*}
    \mathrm{E} \| z_{x,n} \|_F^2 = \mathrm{Tr}\big[ \mathrm{E}(z_{x,n} z_{x,n}^{\tran}) \big] \quad\text{and}\quad \mathrm{E} \| z_{u,n} \|_F^2 = \mathrm{Tr}[ \mathrm{E}(z_{u,n} z_{u,n}^{\tran}) \big],
\end{equation*}
which are the traces of the \textit{instantaneous average powers} or the autocorrelations at $t$ of $\{ \bm{z_x}, \bm{z_u} \}$ \cite{Papoulis2002,Oppenheim2010}.
Since $\bm{z_x}$ (resp. $\bm{z_u}$) is the output of the linear system $C^p\bm{\Phi_x}$ ($D^p\bm{\Phi_u}$) given the white noise $\bm{w}$ as input, we must have that  
\begin{equation*}
    \begin{aligned}
        \mathrm{E} \| z_{x,n} \|_F^2 
            &= \mathrm{Tr}\big[(C^p\Phi_{x,n}) * \mathrm{E}(w_n w_n^{\tran}) * (C^p\Phi_{x,-n})^{T} \big] \\
            &= \mathrm{Tr}\big[(C^p\Phi_{x,n}) \Sigma_w (C^p\Phi_{x,n})^{T} \big] \\
            &= \| C^p\Phi_{x,n} \Sigma_w^{1/2} \|_F^2
    \end{aligned}
     ~\quad\text{and}\quad~
     \begin{aligned}
        \mathrm{E} \| z_{u,n} \|_F^2 
            &= \mathrm{Tr}\big[(D^p\Phi_{u,n}) * \mathrm{E}(w_n w_n^{\tran}) * (D^p\Phi_{u,-n})^{T} \big]\\
            &= \mathrm{Tr}\big[(D^p\Phi_{u,n}) \Sigma_w (D^p\Phi_{u,n})^{T} \big] \\
            &= \| D^p\Phi_{u,n} \Sigma_w^{1/2} \|_F^2.
    \end{aligned}
\end{equation*}
These results can then directly be used in the objective Eq. \eqref{eq: AP_Best_Response_GFNE_Step5_A} to remove the dependency on the random process $\bm{w}$.
For the operational constraints in Eq. \eqref{eq: AP_Best_Response_GFNE_Step5_C}, we proceed as shown in Section \ref{sec: App_Robust_Constraints} to obtain equivalent worst-case norm constraints which are valid for all $\bm{w} \in \sigset{W}$.
The structural constraints in Eq. \eqref{eq: AP_Best_Response_GFNE_Step5_D} are the finite-impulse response (FIR) and sparsity constraints presented in Section III of the main text; these can be directly applied into Problem \eqref{eq: AP_Best_Response_GFNE_Step5} without further manipulations.

In conclusion, the system-level (approximately)best-response mapping $\widehat{BR}_{\Phi}^p(\bm{\Phi_u}^{-p})$ corresponds to the solutions to the problem
\begin{subequations}
    \begin{align}
         \underset{\bm{\Phi_u}^{p}}{\text{minimize}} \quad 
            & \sum_{n=0}^{N-1} \Big( \| C^p\Phi_{x,n} \Sigma_w^{1/2} \|_F^2 + \big\| \textstyle\sum_{\tilde{p}=1}^{N_P} D^{p\tilde{p}} \Phi_{u,n}^{\tilde{p}} \Sigma_w^{1/2} \big\|_F^2 \Big) + \| C^p\Phi_{x,N} \Sigma_w^{1/2} \|_F^2                                    \label{eq: AP_Best_Response_GFNE_Final_A} \\[1ex]
        {\text{subject to}} \quad
            & {\Phi_{x,n+1}} = A {\Phi_{x,n}} + \textstyle\sum_{\tilde{p}=1}^{N_P} B^{\tilde{p}} \Phi_{u,n}^{\tilde{p}}, \quad \Phi_{x,1} = I_{N_x}, \quad \| \Phi_{x,N} \|_F^2 \leq \gamma, \hspace*{01.0em} n = 1,\ldots,N-1                            \hspace*{-2em}  \label{eq: AP_Best_Response_GFNE_Final_B}\\
            & \big\| \mathrm{col}(P^{\tran} \Phi_{x,n}^{\tran}   [G_x            ]_i^{\tran})_{n=1}^N       \big\|_q^{*} \leq 1 / N^{1/q},                                                   \hspace*{13.7em} i = 1,\ldots,N_{\mathcal{X}},               \hspace*{-2em}  \label{eq: AP_Best_Response_GFNE_Final_C}\\
            & \big\| \mathrm{col}(P^{\tran} \Phi_{u,n}^{p^\tran} [G_u^p          ]_j^{\tran})_{n=1}^{N{-}1} \big\|_q^{*} \leq 1 / (N{-}1)^{1/q},                                             \hspace*{11.2em} j = 1,\ldots,N_{\mathcal{U}^p},             \hspace*{-2em}  \label{eq: AP_Best_Response_GFNE_Final_D}\\
            & \big\| \mathrm{col}(P^{\tran} \Phi_{u,n}^{\tran}   [G_{\mathcal{G}}]_l^{\tran})_{n=1}^{N{-}1} \big\|_q^{*}  \leq 1 / (N{-}1)^{1/q},                                            \hspace*{11.3em} l = 1,\ldots,N_{\mathcal{U}_{\mathcal{G}}}, \hspace*{-2em}  \label{eq: AP_Best_Response_GFNE_Final_E}\\
            & \texttt{Sp}\big(\Phi_{x,n}  \big) = \texttt{Sp}(               A^{\max{(0, \lfloor \frac{n - d_a}{d_c} \rfloor)}}),                                                            \hspace*{14.8em} n = 1,\ldots,N-1,                           \hspace*{-2em}  \label{eq: AP_Best_Response_GFNE_Final_F}\\ 
            & \texttt{Sp}\big(\Phi_{u,n}^p\big) = \texttt{Sp}({B^{p}}^{\tran}A^{\max{(0, \lfloor \frac{n - d_a}{d_c} \rfloor)}}),                                                            \hspace*{13.0em} n = 1,\ldots,N-1,                           \hspace*{-2em}  \label{eq: AP_Best_Response_GFNE_Final_G} 
        \end{align} \label{eq: AP_Best_Response_GFNE_Final}%
\end{subequations}
which is a robust convex optimisation with $(N-1)N_u^pN_x$ decision variables (the entries of $\{ \Phi_{u,n}^{p} \}_{n=1}^{N{-}1} \subseteq \mathbb{R}^{N_u^p \times N_x}$) that can be solved using numerical methods \cite{Boyd2004}.
For each player $p \in \mathcal{P} = \{ 1, \ldots, N_P \}$, the problem data is 
\begin{itemize}[leftmargin=*]
    \item [--] The FIR horizon $N$ and terminal constraint parameter $\gamma$;
    \item [--] The weighting matrices $C^p$ and $\{ D^{p1}, \ldots, D^{pN_P} \}$;
    \item [--] The state-space matrices $(A, B^1, \ldots, B^{N_P})$;
    \item [--] The constraint matrices $(G_x, G_u^p, G_{\mathcal{G}})$;
    \item [--] The noise covariance matrix $\Sigma_w$ and support set $\mathcal{W} = \{ P\zeta : \| \zeta \|_q \leq 1 \}$;
    \item [--] The action $d_a$ and communication $d_c$ delay parameters.
\end{itemize}
A diagram summarising the transformations between best-response mappings $BR^p$ and $\widehat{BR}_{\Phi}^p$ is provided in the next page.

\begin{figure}[htb!] \centering
    \includegraphics{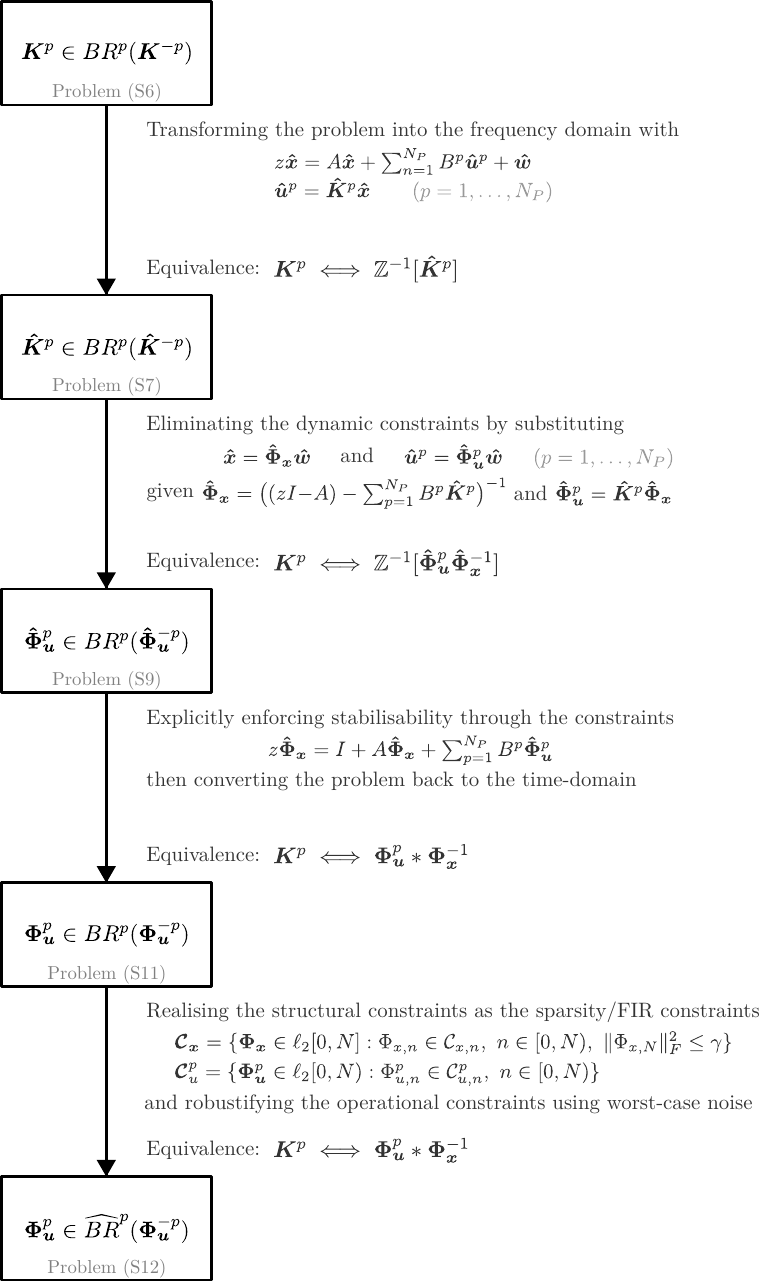}
\end{figure}

\clearpage\section{Proof of theorems and corollaries} \label{sec: App_Proofs}

\begin{theorem*}%
    Consider the dynamics $z\sig{\hat x} = A\sig{\hat x} + \sum_{p\in\mathcal{P}} B^p \sig{\hat u}^p + \sig{\hat w}$ under state-feedback $\sig{\hat u}^p = \sig{\hat K}^p\sig{\hat x}$ ($\forall p \in \mathcal{P}$). The following statements are true:
    \begin{enumerate}[label=\alph{*})]
        \item The affine space 
            \begin{equation} \label{eq: SLP_AffineSpace_Supplementary}
                \begin{bmatrix}
                  zI - A & {-}B^1 ~\cdots~ {-}B^{N_P}
                \end{bmatrix}  \begin{bmatrix}
                  \sig{\hat \Phi_x} \\ \sig{\hat \Phi_u}^1 \\ \vdots \\ \sig{\hat \Phi_u}^{N_P}
                \end{bmatrix} = I, \qquad \sig{\hat \Phi_x},\sig{\hat \Phi_u}^1,\ldots,\sig{\hat \Phi_u}^{N_P} \in \frac{1}{z}\mathcal{RH}_{\infty}
            \end{equation}
            parametrizes all system responses from $\sig{\hat w}$ to $(\sig{\hat x},\sig{\hat u}^1,\ldots,\sig{\hat u}^{N_P})$ achievable by internally stabilising policies $(\sig{\hat K}^1,\ldots,\sig{\hat K}^{N_P})$. 

        \item Any response $(\sig{\hat \Phi_x}, \sig{\hat \Phi_u}^1, \ldots, \sig{\hat \Phi_u}^{N_P})$ satisfying Eq. \eqref{eq: SLP_AffineSpace_Supplementary} is achieved by the policies $\sig{\hat K}^p = \sig{\hat \Phi_u}^p \sig{\hat \Phi_x}^{-1}$ ($\forall p \in \mathcal{P}$), which are internally stabilising and can be implemented as
        \begin{subequations}
        \begin{align}
            z\sig{\hat \xi} &= z(I - \sig{\hat \Phi_x})\sig{\hat \xi} + \sig{\hat x}; \\
            \sig{\hat u}^p  &= z\sig{\hat \Phi_u}^p \sig{\hat \xi}. 
        \end{align}\label{eq: SLP_ControlImplementation_Supplementary}%
        \end{subequations}
    \end{enumerate}
\end{theorem*}
\vspace*{-1.5em}
\begin{proof} 
    Let $B \coloneqq [B^1\ B^2\ \cdots\ B^{N_P}]$, and transfer matrices $\sig{\hat \Phi_{u}} \coloneqq \texttt{col}(\sig{\hat \Phi_u}^{1}, \ldots, \sig{\hat \Phi_u}^{N_P})$ and $\sig{\hat K} \coloneqq \texttt{col}(\sig{\hat K}^{1}, \ldots, \sig{\hat K}^{N_P})$. The responses from $\sig{\hat w}$ to $(\sig{\hat x},\sig{\hat u})$ are $\sig{\hat x} = \sig{\hat \Phi_x}\sig{\hat w} = (zI - A - B \sig{\hat K})^{-1} \sig{\hat w}$ and $\sig{\hat u} = \sig{\hat \Phi_u}\sig{\hat w} = \sig{\hat K}(zI - A - B^p \sig{\hat K})^{-1} \sig{\hat w}$. Thus, 
    \begin{align*}
        \begin{bmatrix}
          zI - A & -B
        \end{bmatrix}  \begin{bmatrix}
            (zI - A - B \sig{\hat K})^{-1} \\
            \sig{\hat K} (zI - A - B \sig{\hat K})^{-1}
        \end{bmatrix} 
             = (zI - A)(zI - A - B \sig{\hat K})^{-1} {-} B \sig{\hat K}(zI - A - B \sig{\hat K})^{-1}
             = I.
    \end{align*}
    For the second statement, we first show that $\sig{\hat K}$ achieves the desired response then that it is internally stabilising. Since Eq. \eqref{eq: SLP_AffineSpace_Supplementary} implies that $\sig{\hat \Phi_x}$ has the leading spectral component $\Phi_{x,1} = I_{N_x}$, $\sig{\hat \Phi_x}^{-1}$ exists. Then, $\sig{\hat K} = \sig{\hat \Phi_u} \sig{\hat \Phi_x}^{-1}$ is well-defined, and
    \begin{align*}
        \sig{\hat x} = (zI - A - B \sig{\hat \Phi_u}\sig{\hat \Phi_x}^{-1})^{-1} \sig{\hat w} 
               = \sig{\hat \Phi_x} \left( (zI - A)\sig{\hat \Phi_x} - B \sig{\hat \Phi_{u}} \right)^{-1} \sig{\hat w} 
               = \sig{\hat \Phi_x} \sig{\hat w},
    \end{align*}
    due to Eq. \eqref{eq: SLP_AffineSpace_Supplementary}. Moreover, $\sig{\hat u} = \sig{\hat K} \sig{\hat x} = \sig{\hat \Phi_u} \sig{\hat \Phi_x}^{-1} \sig{\hat \Phi_x} \sig{\hat w} = \sig{\hat \Phi_u} \sig{\hat w}$. Thus, $\sig{\hat K}$ achieves the response ($\sig{\hat \Phi_x}, \sig{\hat \Phi_u}$), or, equivalently, $(\sig{\hat K}^1, \sig{\hat K}^2, \ldots, \sig{\hat K}^{N_P})$ achieves $(\sig{\hat \Phi_x}, \sig{\hat \Phi_u}^1, \ldots, \sig{\hat \Phi_u}^{N_P})$. To show that this policy is internally stabilising, consider its equivalent representation $\sig{\hat K} = \sig{\tilde \Phi_u}(zI - \sig{\tilde \Phi_x})^{-1} \sig{\tilde \Phi_y}$ with $\sig{\tilde \Phi_x} = z(I - \sig{\hat \Phi_x})$, $\sig{\tilde \Phi_u} = z\sig{\hat \Phi_u}$, and $\sig{\tilde \Phi_y} = I$. Introducing external perturbations $\{\sig{\delta_x}, \sig{\delta_u}, \sig{\delta_{\xi}}\} \subseteq \ell_{\infty}$ (see Figure 2 of the manuscript), it suffices to verify that the transfer matrices from $(\sig{\hat \delta_x}, \sig{\hat \delta_u}, \sig{\hat \delta_{\xi}})$ to $(\sig{\hat x},\sig{\hat u},\sig{\hat \xi})$,
    \begin{equation}
        \begin{bmatrix}
            \sig{\hat x} \\ \sig{\hat u} \\ \sig{\hat \xi}
        \end{bmatrix} =
        \begin{bmatrix}
            \sig{\hat \Phi_x}   & \sig{\hat \Phi_x}B     & \sig{\hat \Phi_x}(zI - A) \\
            \sig{\hat \Phi_u}   & I + \sig{\hat \Phi_u}B & \sig{\hat \Phi_u}(zI - A) \\
            \frac{1}{z} I & \frac{1}{z} B    & \frac{1}{z}(zI - A) \\
        \end{bmatrix} 
        \begin{bmatrix}
            \sig{\hat \delta_x} \\ \sig{\hat \delta_u} \\ \sig{\hat \delta_{\xi}}
        \end{bmatrix},
    \end{equation}
    are all stable. This follows immediately from from $\sig{\hat \Phi_x},\sig{\hat \Phi_u} \in \frac{1}{z}\mathcal{RH}_{\infty}$. Therefore, the policy $\sig{\hat K}$ is internally stabilising.
\end{proof}

\begin{corollary*}
    A policy $\sig{K}^p = \sig{\Phi_u}^p\sig{\Phi_x}^{-1}$ ($p \in \mathcal{P}$) is defined by the kernel $\sig{\Phi}^p = \sig{\Phi_u}^p * \sig{\Phi_x}^{-1}$, and can be implemented as
    \begin{subequations}
        \begin{align}
            \xi_{t} &=      -     \textstyle\sum_{\tau=1}^{t} \Phi_{x,\tau{+}1}   \xi_{t-\tau} + x_t; \\ 
            u^p_{t} &= \phantom{-}\textstyle\sum_{\tau=0}^{t} \Phi^p_{u,\tau{+}1} \xi_{t-\tau},
        \end{align} \label{eq: eq: SLP_ControlImplementation_Time_Supplementary}%
    \end{subequations}
    using an auxiliary \emph{internal state} $\xi = (\xi_n)_{n\in\mathbb{N}}$ with $\xi_0 = x_0$.
\end{corollary*}
\begin{proof}
    The statement $\sig{\Phi}^p = \sig{\Phi_u}^p * \sig{\Phi_x}^{-1}$ follows directly from the inverse $\mathcal{Z}$-Transform of $\sig{\hat K}^p = \sig{\hat \Phi_u}^p \sig{\hat \Phi_x}^{-1}$ and $\sig{\Phi}^p = (\Phi^p_n)_{n\in\mathbb{N}}$ being the kernel of $\sig{K}^p$. 
    The operations Eq. \eqref{eq: eq: SLP_ControlImplementation_Time_Supplementary} are obtained as the inverse $\mathcal{Z}$-Transform of Eq. \eqref{eq: SLP_ControlImplementation_Supplementary} and the fact that $\mathcal{Z}^{-1}[z(I-\sig{\hat \Phi_x})\sig{\hat \xi}] = \xi_{t+1} - \xi_{t} - \sum_{\tau=2}^{t+1} \Phi_{x,\tau} \xi_{t+1-\tau}$.
\end{proof}

\begin{theorem*}
    Consider a fixed-point $\sig{\Phi_u}^{\varepsilon} \in \widehat{BR}_{\Phi}(\sig{\Phi_u}^{\varepsilon})$ and assume that $\| \Phi_{x,N}^{\star} \|_F^2 \le \gamma$ for $\sig{\Phi_x}^{\star} = \sig{F_{\Phi}} \sig{\Phi_u}^{\star}$ obtained from the original best-response $\sig{\Phi_u}^{\star} \in BR_{\Phi}(\sig{\Phi_u}^{\varepsilon})$. Then, the profile $\sig{\Phi_u}^{\varepsilon} = (\sig{\Phi_u}^{1^{\varepsilon}},\ldots,\sig{\Phi_u}^{N_P^{\varepsilon}})$ is an $\varepsilon$-GNE of $\mathcal{G}_{\infty}^{\Phi}$ satisfying
    \begin{equation} \label{eq: Theorem_eEquilibrium_Aux1}
        J^p(\sig{\Phi_u}^{p^\varepsilon}, \sig{\Phi_u}^{-p^\varepsilon}) \leq \textstyle\min_{\sig{\Phi_u}^p \in U_{\Phi}^p(\sig{\Phi_u}^{-p^{\epsilon}})} J^p(\sig{\Phi_u}^p,\sig{\Phi_u}^{-p^\varepsilon}) + \varepsilon
    \end{equation}
    with $\varepsilon = \max_{p\in\mathcal{P}} \gamma J^p(\sig{\Phi_u}^{p^\varepsilon}, \sig{\Phi_u}^{-p^\varepsilon})$ for every player $p \in \mathcal{P}$.
\end{theorem*}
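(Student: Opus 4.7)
The plan is to bound each player's best-response gap by chaining the optimality conditions for the approximate and exact problems, and then to absorb the residual into $\varepsilon$ using the soft terminal constraint. Throughout, I write $\hat J^p$ for the truncated objective in Problem \eqref{eq: Best_Response_GFNE_SLS_Soft} and $J^p$ for the full objective in Problem \eqref{eq: Best_Response_GFNE_SLS}.

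First I would establish feasibility. By the hypothesis $\| \Phi_{x,N}^{\star} \|_F^2 \le \gamma$, the true best-response $\sig{\Phi_u}^{p^{\star}}$ satisfies the soft terminal constraint Eq. \eqref{eq: Best_Response_GFNE_SLS_Soft_E}, so it lies in the feasible set of $\widehat{BR}_{\Phi}^p(\sig{\Phi_u}^{-p^{\varepsilon}})$. Because $\sig{\Phi_u}^{\varepsilon}$ is a fixed-point of $\widehat{BR}_{\Phi}$, the component $\sig{\Phi_u}^{p^{\varepsilon}}$ minimises $\hat J^p(\cdot, \sig{\Phi_u}^{-p^{\varepsilon}})$ over this set, so in particular $\hat J^p(\sig{\Phi_u}^{p^{\varepsilon}}, \sig{\Phi_u}^{-p^{\varepsilon}}) \leq \hat J^p(\sig{\Phi_u}^{p^{\star}}, \sig{\Phi_u}^{-p^{\varepsilon}})$.

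Next I would decompose $J^p = \hat J^p + T^p$, where $T^p(\sig{\Phi_u}) = \sum_{n=N+1}^{\infty} \| C^p \Phi_{x,n} \|_F^2$ is the tail beyond the FIR horizon; since $\Phi_{u,n}^{\tilde p} = 0$ for $n \geq N$, the state response obeys $\Phi_{x,n} = A^{n-N}\Phi_{x,N}$ and the tail is a quadratic form in $\Phi_{x,N}$. Combining this decomposition with the optimality inequality and the optimality of $\sig{\Phi_u}^{p^{\star}}$ for $J^p$ yields
\begin{align*}
    J^p(\sig{\Phi_u}^{p^{\varepsilon}}, \sig{\Phi_u}^{-p^{\varepsilon}})
        &= \hat J^p(\sig{\Phi_u}^{p^{\varepsilon}}, \sig{\Phi_u}^{-p^{\varepsilon}}) + T^p(\sig{\Phi_u}^{p^{\varepsilon}}) \\
        &\leq \hat J^p(\sig{\Phi_u}^{p^{\star}}, \sig{\Phi_u}^{-p^{\varepsilon}}) + T^p(\sig{\Phi_u}^{p^{\varepsilon}}) \\
        &\leq J^p(\sig{\Phi_u}^{p^{\star}}, \sig{\Phi_u}^{-p^{\varepsilon}}) + T^p(\sig{\Phi_u}^{p^{\varepsilon}}),
\end{align*}
where the last step uses $T^p \geq 0$. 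Since $\sig{\Phi_u}^{p^{\star}}$ is the minimiser of $J^p(\cdot, \sig{\Phi_u}^{-p^{\varepsilon}})$ over $U_{\Phi}^p(\sig{\Phi_u}^{-p^{\varepsilon}})$, the right-hand side equals $\min_{\sig{\Phi_u}^p \in U_{\Phi}^p(\sig{\Phi_u}^{-p^{\varepsilon}})} J^p(\sig{\Phi_u}^p, \sig{\Phi_u}^{-p^{\varepsilon}}) + T^p(\sig{\Phi_u}^{p^{\varepsilon}})$.

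The main obstacle — and the step that the statement seems to take as the operative use of $\gamma$ — is to argue that $T^p(\sig{\Phi_u}^{p^{\varepsilon}}) \leq \gamma J^p(\sig{\Phi_u}^{p^{\varepsilon}}, \sig{\Phi_u}^{-p^{\varepsilon}})$. The only handle available is the terminal constraint $\|\Phi_{x,N}^{\varepsilon}\|_F^2 \leq \gamma$, together with the fact that the surrogate objective $\hat J^p$ already contains the proxy $\|C^p \Phi_{x,N}\|_F^2$ that the optimizer $\sig{\Phi_u}^{p^{\varepsilon}}$ has been shaped to keep small. My plan would be to express $T^p$ as $\mathrm{tr}(\Phi_{x,N}^{\varepsilon\,\tran} Q \Phi_{x,N}^{\varepsilon})$ for an appropriate observability-Gramian-like operator $Q$ derived from $A$ and $C^p$, bound this by $\|Q\|_{2\to 2}\,\gamma$ via the terminal constraint, and then compare against $J^p(\sig{\Phi_u}^{p^{\varepsilon}},\sig{\Phi_u}^{-p^{\varepsilon}})$, whose partial sum already majorises the same quadratic form. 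Taking a maximum over $p \in \mathcal{P}$ of the resulting player-wise gaps gives the claimed uniform $\varepsilon$ and delivers Eq. \eqref{eq: Theorem_eEquilibrium_Aux1}.
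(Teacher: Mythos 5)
Your overall strategy---play the exact best-response off against the fixed point via optimality in the approximate problem, then absorb the discrepancy into $\varepsilon$---has the right shape, but two steps do not go through, and the second is where the proof actually lives. First, the comparator you feed into the approximate problem is infeasible: the exact best-response $\sig{\Phi_u}^{p^{\star}}$ solves the infinite-horizon Problem \eqref{eq: Best_Response_GFNE_SLS} and is in general an infinite-impulse response, whereas the feasible set of $\widehat{BR}_{\Phi}^p(\sig{\Phi_u}^{-p^{\varepsilon}})$ consists of kernels supported on $[0,N)$; satisfying the terminal inequality $\|\Phi_{x,N}^{\star}\|_F^2\le\gamma$ does not place $\sig{\Phi_u}^{p^{\star}}$ in that set. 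The paper repairs exactly this by truncating, taking $\sig{\tilde\Phi_u}^{\varepsilon}=(\Phi_{u,n}^{\star})_{n=1}^{N-1}$ as the candidate; since $\Phi_{x,n}$ for $n\le N$ depends only on $\Phi_{u,m}$ with $m<n$, the truncation inherits the dynamics, the operational constraints on $[1,N)$, and (by the theorem's hypothesis) the terminal constraint. Your first optimality inequality must be stated against this truncated candidate; that part is fixable.

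Second, and more seriously, the tail estimate you defer to---$T^p(\sig{\Phi_u}^{p^{\varepsilon}})\le\gamma\,J^p(\sig{\Phi_u}^{p^{\varepsilon}},\sig{\Phi_u}^{-p^{\varepsilon}})$---is not a consequence of the terminal constraint, and the Gramian route you sketch cannot deliver it. Writing $T^p=\mathrm{tr}\big((\Phi_{x,N}^{\varepsilon})^{\tran}Q\,\Phi_{x,N}^{\varepsilon}\big)$ with $Q=\sum_{k\ge1}(A^k)^{\tran}(C^p)^{\tran}C^pA^k$ and invoking $\|\Phi_{x,N}^{\varepsilon}\|_F^2\le\gamma$ only yields $T^p\le\gamma\|Q\|_{2\to2}$, and nothing relates $\|Q\|_{2\to2}$ to $J^p(\sig{\Phi_u}^{p^{\varepsilon}},\sig{\Phi_u}^{-p^{\varepsilon}})$; worse, $Q$ diverges whenever $A$ is unstable, which is precisely the regime the paper targets. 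The paper avoids the additive tail decomposition altogether: it bounds the cost of the truncated candidate multiplicatively, $J^p(\sig{\tilde\Phi_u}^{p^{\varepsilon}},\sig{\Phi_u}^{-p^{\varepsilon}})\le\tfrac{1}{1-\gamma}J^p(\sig{\Phi_u}^{\star},\sig{\Phi_u}^{-p^{\varepsilon}})$ (the standard $\tfrac{1}{1-\gamma}$ degradation bound for soft-FIR truncation in SLS), and then rearranges $(1-\gamma)J^p(\sig{\Phi_u}^{p^{\varepsilon}},\sig{\Phi_u}^{-p^{\varepsilon}})\le\min J^p$ into Eq. \eqref{eq: BR_SLS_Soft_Equilibrium} with $\varepsilon=\max_{p}\gamma J^p(\sig{\Phi_u}^{p^{\varepsilon}},\sig{\Phi_u}^{-p^{\varepsilon}})$. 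Without either that multiplicative bound or an honest proof of your tail inequality, the argument does not close.
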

\begin{proof}
    Let $\sig{\Phi_u}^{\star} \in BR_{\Phi}(\sig{\Phi_u}^{\varepsilon})$. We construct a candidate fixed-point as $\sig{\tilde \Phi_u}^{\varepsilon} = (\Phi_{u,n}^{\star})_{n=1}^{N-1}$. Clearly, $\sig{\tilde \Phi_u}^{p^\varepsilon}$ satisfies the constraints in $\widehat{BR}_{\Phi}^p$ by construction and $(\Phi_{x,n}^{\star})_{n=1}^N = \sig{\tilde \Phi_x}^{\varepsilon} = \sig{F_{\Phi}} \sig{\tilde \Phi_u}^{\varepsilon}$ satisfies $\| \Phi_{x,N}^{\star} \|_F^2 \le \gamma$ by our assumption. From optimality, we have $J^p(\sig{\Phi_u}^{p^\varepsilon}, \sig{\Phi_u}^{-p^\varepsilon}) \le J^p(\sig{\tilde \Phi_u}^{p^\varepsilon}, \sig{\Phi_u}^{-p^\varepsilon}) \le \frac{1}{1-\gamma}J^p(\sig{\Phi_u}^{\star}, \sig{\Phi_u}^{-p^\varepsilon})$, where the second inequality derives from the quadratic objective functional being larger for the infinite-horizon response $\sig{\Phi_u}^{\star}$ and $\frac{1}{1-\gamma} > 1$. Finally, $\sig{\Phi_u}^{p^\star} = \argmin_{\sig{\Phi_u}^p \in U_{\Phi}^p(\sig{\Phi_u}^{-p^{\epsilon}})} J^p(\sig{\Phi_u}^p,\sig{\Phi_u}^{-p^\varepsilon})$ by definition and thus Eq. \eqref{eq: Theorem_eEquilibrium_Aux1} follows by letting $\varepsilon = \max_{p\in\mathcal{P}} \gamma J^p(\sig{\Phi_u}^{p^\varepsilon}, \sig{\Phi_u}^{-p^\varepsilon})$.
\end{proof}

\subsection{Proof of Theorem 3} \label{app: Proof_TheoremLipschitz}

We prove this Theorem using some auxiliary lemmas. In the following, we will assume that $\{ \sig{\Phi_}x ,\sig{\Phi_u}^p \}$ ($\forall p$) are FIR mappings with $\| \Phi_{x,N} \|_F^2 \leq \gamma$ being strictly satisfied, and $\mathcal{W} = \{ w_t : \| w_t \|_\infty \leq 1 \}$. Moreover, we define the operators $\{ \sig{H}^{p\tilde{p}} \}_{p,\tilde{p}\in\mathcal{P}}$ as in Section III-B. 
We start by proving the following useful lemma on the Schur complement of positive semi-definite matrices.

\begin{lemma} \label{lem: KKT_System_Norm}
    Consider the matrix $S = A^{-1} - A^{-1}B^{\tran}(BA^{-1}B^{\tran})^{-1}BA^{-1}$ defined for some matrices $A \in \mathbb{S}_{++}^{N}$ and $B \in \mathbb{R}^{M \times N}$ with $\texttt{rank}(B) = M \leq N$. Then, $S \in \mathbb{S}_{+}^{N}$ and $\| S \|_2 \leq 1/\sigma_{\min}(A)$.
\end{lemma}
\begin{proof}
    Firstly, note that the matrix $S$ is well-defined as both $A^{-1} \in \mathbb{S}_{++}^{N}$ and $(BA^{-1}B^{\tran})^{-1} \in \mathbb{S}_{++}^{M}$ exist due to our assumptions. Now, consider the fact that $S$ corresponds to the Schur complement of the block-matrix 
    \begin{equation*}
        K = \begin{bmatrix}
            A^{-1} & A^{-1}B^{\tran} \\ BA^{-1} & BA^{-1}B^{\tran}
        \end{bmatrix} =
        \begin{bmatrix}
            A^{-1/2} \\ BA^{-1/2} 
        \end{bmatrix} \begin{bmatrix}
            A^{-1/2} \\ BA^{-1/2}
        \end{bmatrix}^{\tran}.
    \end{equation*}
    Since $K$ corresponds to the outer product of two identical matrices, it must be that $K \in \mathbb{S}_{+}^{N+M}$. Thus, from the properties of Schur complements \cite{Horn2012}, $A^{-1} \in \mathbb{S}_{++}^{N}$ and $K \in \mathbb{S}_{+}^{N+M}$ imply $S \in \mathbb{S}_{+}^{N}$. Moreover, this immediately implies that $$0 \leq \lambda_{\max}(A^{-1} - A^{-1}B^{\tran}(BA^{-1}B^{\tran})^{-1}BA^{-1}) \leq \lambda_{\max}(A^{-1}) = 1 / \lambda_{\min}(A).$$ Finally, since $\sigma(X) = \lambda(X)$ for any $X$ positive semi-definite, we have that $\|S\|_2 \leq 1/\sigma_{\min}(A)$.
\end{proof}

Now, we consider the Lipschitz properties of solution mappings for parametric quadratic programs.

\begin{lemma} \label{lem: QP_Sol_Lipscitz}
    Consider the solution to a parametric quadratic program, $x^{\star} = Q(y)$ with $Q : \mathbb{R}^{N_y} \to \mathbb{R}^{N_x}$, defined as
    \begin{equation} \label{eq: QP_Parametric}%
        Q(y) = \argmin_{x \in \mathbb{R}^{N_x}} \{ x^{\tran} M_x x + 2(M_y y + m_y )^{\tran} x ~:~ \| G_{x,i} x \|_{1} \leq 1, ~ i = 1,\ldots,N_{\mathcal{X}}  \} .
    \end{equation} 
    given the matrices $\{ M_x, M_y, m_y \}$ of appropriate dimensions and $\{ G_{x,i} \in \mathbb{R}^{N_G \times N_x} \}_{i=1}^{N_{\mathcal{X}}}$ given sizes $N_G$ and $N_{\mathcal{X}}$. Moreover, assume that $M_x \in \mathbb{S}_{++}^{N_x}$ and that $G_x = \texttt{col}(G_{x,i})_{i=1}^{N_{\mathcal{X}}} \in \mathbb{R}^{N_{\mathcal{X}}N_G \times N_x}$ is a full-row-rank matrix. Then, the following are true:
    \begin{enumerate}[label=\alph{*})]
        \item $Q$ takes the form of an affine operator $Q(y) = q_y - Q_y M_y y$;
        \item $Q$ has a Lipschitz constant $L_Q = \sigma_{\max}(M_y) / \sigma_{\min}(M_x)$.
    \end{enumerate}
\end{lemma}
\begin{proof}
    Firstly, note that the constraints $\mathcal{X} = \{ x : \| G_{x,i} x\|_1 \leq 1, ~ i =1,\ldots,N_{\mathcal{X}} \}$ can be represented in epigraph form,
    \begin{align}
        \mathcal{X} 
            = \{ x : -t \preceq G_{x} x \preceq t, ~ G_t t = \bm{1}_{N_{\mathcal{X}}}\},
    \end{align}
    given $G_x = \texttt{col}(G_{x,i})_{i=1}^{N_{\mathcal{X}}}$ and $G_t = I_{N_{\mathcal{X}}} \otimes \bm{1}_{N_G}^{\tran}$, and an auxiliary decision-vector $t \in \mathbb{R}^{N_{\mathcal{X}} N_G}$. Without loss of generality, we augment the objective function in Eq. \eqref{eq: QP_Parametric} with the term $(\varepsilon/2) t^{\tran}t$ for some $\varepsilon > 0$. Furthermore, assume we have identified $A(y) \subseteq \{ 1, \ldots, N_{\mathcal{X}} N_G \}$, the rows of $G_x$ for which the inequality constraints are active, and let $U_{A}$ be the corresponding projection matrix (including the sign of the active constraints) such that $U_A(G_x x - t) = 0$. Now, consider the Lagrangian $$\mathcal{L}(x,t,\lambda) = x^{\tran} M_x x + 2(M_y y + m_y )^{\tran} x + (\varepsilon/2) t^{\tran} t + \lambda_x^{\tran}(U_{A}G_x x - U_{A}t) + \lambda_t^{\tran} (G_t t - \bm{1}_{N_{\mathcal{X}}}).$$ Being convex with linear constraints, Slater's condition holds for this problem \cite{Boyd2004} and an optimal point $(x^{\star},t^{\star},\lambda_x^{\star}, \lambda_t^{\star})$ can be obtained from the solutions of the Karush-Kuhn-Tucker (KKT) system
    \begin{equation} \label{eq: QP_KKT_Conditions}
        \begin{bmatrix}
            2M_x     & 0   & (U_{A}G_x)^{\tran} & 0 \\ 
            0        & \varepsilon I & -U_{A}^{\tran} & G_t^{\tran}  \\
            U_{A}G_x & -U_A & 0 & 0  \\
            0        & G_t & 0 & 0  \\
        \end{bmatrix} \begin{bmatrix}
            x^{\star} \\ t^{\star} \\ \lambda_x^{\star} \\ \lambda_t^{\star}
        \end{bmatrix} =
        \begin{bmatrix}
            -2(M_y y + m_y) \\ 0 \\ 0 \\ \bm{1}_{N_{\mathcal{X}}}
        \end{bmatrix} \quad\Rightarrow\quad \begin{bmatrix}
           M & G^{\tran} \\ G &
        \end{bmatrix} \begin{bmatrix}
            x^{\star} \\ t^{\star} \\ \lambda_x^{\star} \\ \lambda_t^{\star}
        \end{bmatrix} =
        \begin{bmatrix}
            -2(M_y y + m_y) \\ 0 \\ 0 \\ \bm{1}_{N_{\mathcal{X}}}
        \end{bmatrix} 
    \end{equation}
    Since $M_x \in \mathbb{S}_{++}^{N_x}$ and $(U_AG_x, G_t)$ are both full-row-rank, the KKT block-matrix has an analytical inverse \cite{Boyd2004} and we obtain
    \begin{equation} \label{eq: QP_Solution_Final}
        \begin{bmatrix} x^{\star} \\ t^{\star} \end{bmatrix} = (M^{-1} - M^{-1}G^{\tran}(G M^{-1} G^{\tran})^{-1}GM^{-1}) \begin{bmatrix} -2(M_y y + m_y) \\ 0 \end{bmatrix} + (\text{affine terms}), 
    \end{equation}
    Finally, we must have $x^{\star} = Q(p) = q_y - (M_x^{-1} - Q_x)M_y y$ with $Q_x$ the top-left block of matrix $M^{-1}G^{\tran}(G M^{-1} G^{\tran})^{-1}GM^{-1}$ and $q_y$ denoting affine terms. This proves the first statement. For the second statement, consider the fact that the spectral norm $L_Q = \| (M_x^{-1} - Q_x)M_y \|_2$ is the tightest Lipschitz constant for the operator $Q$ \cite{Ryu2022}. Moreover, note that Lemma \ref{lem: KKT_System_Norm} implies that the matrix in Eq. \eqref{eq: QP_Solution_Final} is positive semi-definite and thus its top-left block satisfies $(M_x^{-1} - Q_x) \in \mathbb{S}_{+}^{N_x}$. Therefore $\| M_x^{-1} - Q_x \|_2 = \lambda_{\max}(M_x^{-1} - Q_x) \leq \lambda_{\max}(M_x^{-1}) = 1/\sigma_{\min}(M_x)$. Finally, using the submultiplicative property of operator norms and the definition $\|M_y\|_2 = \sigma_{\max}(M_y)$, we obtain $L_Q = \| (M_x^{-1} - Q_x)M_y \|_2 \leq \sigma_{\max}(M_y)/\sigma_{\min}(M_x).$
\end{proof}

Next, we show that the (approximately)best-response maps in $\mathcal{G}_{\infty}^{\text{LQ}}$ games are equivalent to the solution mapping in Eq. \eqref{eq: QP_Parametric}.

\begin{lemma} \label{lem: BR_Reformulation}
    Consider an (approximately)best-response $\sig{\Phi_u}^{p^{\star}} \in \widehat{BR}_{\Phi}^p(\sig{\Phi_u}^{-p})$. A matrix representation for the FIR kernel $\sig{\Phi_u}^{p^{\star}}$ is given by the matrix ${\Phi}_{u}^{p^{\star}} = \texttt{vec}^{-1}(\vec{\Phi}_u^{p^{\star}}) \in \mathbb{R}^{(N{-}1)N_u^p \times N_x}$ obtained from the solution
    \begin{subequations} 
    \begin{align}
        \vec{\Phi}_u^{p^{\star}} 
            = \textstyle\argmin_{\vec{\Phi}_u^{p}} ~ &\vec{\Phi}_u^{p^{\tran}} (I_{N_x}{\otimes}M_{H^{pp}}) \vec{\Phi}_u^{p} + 2((I_{N_x}{\otimes}M_{H^{p,-p}})\vec{\Phi}_u^{-p} + \vec{M}_{H^{p0}} )^{\tran} \vec{\Phi}_u^p \label{eq: BR_Mapping_Vectorized_A} \\ 
             \mathrm{subject~to} ~ &\| (I_{(N{-}1)N_x}{\otimes}[G_u]_j) \vec{\Phi}_u^{p} \|_{1} \leq 1, \quad i = j,\ldots,N_{\mathcal{U}^p}, \label{eq: BR_Mapping_Vectorized_B}%
    \end{align} \label{eq: BR_Mapping_Vectorized}%
    \end{subequations} 
    given ($M_{H^{pp}}$, $M_{H^{p,-p}}$, $M_{H^{p0}}$) the matrix representations of ($\sig{H}^{pp}$, $\sig{H}^{p,-p}$, $\sig{H}^{p0}$), respectively, and $\vec{M}_{H^{p0}} = \texttt{vec}(M_{H^{p0}})$.
\end{lemma}
\begin{proof}
    We prove this lemma by first reformulating the objective and constraints in $\widehat{BR}_{\Phi}^p$ in terms of the matrix realisation ${\Phi}_{u}^{p} = ({\Phi}_{u,n}^{p})_{n=1}^{N-1}$. We then show that Eqs. \eqref{eq: BR_Mapping_Vectorized} are the equivalent expressions for the vectorization of ${\Phi}_{u}^{p}$, which is an invertible operation. In this direction, first consider that $\sum_{n=1}^N \| C^p \Phi_{x,n} \|_F^2 = \| M_{C^p} \Phi_{x} \|_F^2$ and then
    \begin{align*}
        \| M_{C^p} \Phi_{x} \|_F^2
            &= \| M_{C^p F_{\Phi}^p} \Phi_u^p + M_{C^p F_{\Phi}^{-p}} \Phi_u^{-p} + M_{C^p F_{\Phi}^{0}} \|_F^2; \\
            &=  \texttt{Tr} \big[ \Phi_u^{p^{\tran}} M_{{F_{\Phi}^p}^{\tran}{C^p}^{\tran} C^p F_{\Phi}^p} \Phi_u^{p} + 2(M_{{F_{\Phi}^p}^{\tran}{C^p}^{\tran}C^p F_{\Phi}^{-p}}\Phi_u^{-p} + M_{{F_{\Phi}^p}^{\tran}{C^p}^{\tran}C^p F_{\Phi}^{0}} )^{\tran} \Phi_u^p \big] + (\text{affine terms}),
    \end{align*}
    with $M_{(\cdot)}$ being the matrix representation of the corresponding operators%
    \footnote{
        Specifically, $M_{C^p} = I_{N} \otimes C^p$, $M_{D^{p\tilde{p}}} = I_{N} \otimes D^{p\tilde{p}}$, and $M_{F_{\Phi}^{p}} = \big(I - (Z_1 \otimes A)\big)^{-1}(Z_1 \otimes B^p)$ with $Z_1$ being the lower shift matrix, for all $p,\tilde{p} \in \mathcal{P}$. Moreover, $M_{F_{\Phi}^0} = \big(I - (Z_1 \otimes A)\big)^{-1}(e_1 \otimes I_{N_x})$. Finally, note that we have $M_A M_B = M_{AB}$ for any two linear operators $(A,B)$.
    }. 
    Similarly,
    \begin{align*}
        \textstyle\sum_{n=1}^{N-1} \| D^{pp} \Phi_{u,n}^p + D^{p,-p} \Phi_{u,n}^{-p} \|_F^2
            =  \texttt{Tr} \big[ \Phi_u^{p^{\tran}} M_{{D^{pp}}^{\tran} D^{pp}} \Phi_u^{p} + 2(M_{{D^{pp}}^{\tran} D^{p,-p}}\Phi_u^{-p})^{\tran} \Phi_u^p \big] + (\text{affine terms}).
    \end{align*}
    After some algebra, the objective $J^p(\sig{\Phi_u}^p, \sig{\Phi_u}^{-p})$ can thus be expressed as 
    \begin{align*}
        J^p(\sig{\Phi_u}^p, \sig{\Phi_u}^{-p}) 
            = \texttt{Tr} \big[ \Phi_u^{p^{\tran}} M_{H^{pp}} \Phi_u^{p} + 2(M_{H^{p,-p}}\Phi_u^{-p} + M_{H^{p0}} )^{\tran} \Phi_u^p \big] + (\text{affine terms}),
    \end{align*}
    with $M_{H^{p\tilde{p}}} = M_{(C^p F_{\Phi}^p + D^{pp})^{\tran}(C^p F_{\Phi}^{\tilde{p}} + D^{p\tilde{p}})}$ for all $\tilde{p} \in \mathcal{P} \cup \{0\}$. Finally, we must have that  $J^p(\Phi_u^p, \Phi_u^{-p}) = \vec{J}^p(\vec{\Phi}_u^p, \vec{\Phi}_u^{-p})$ where 
    \begin{equation*}
        \vec{J}^p(\vec{\Phi}_u^p, \vec{\Phi}_u^{-p}) = \vec{\Phi}_u^{p^{\tran}} (I_{N_x} \otimes M_{H^{pp}}) \vec{\Phi}_u^{p} + 2\big( (I_{N_x} \otimes M_{H^{p,-p}})\Phi_u^{-p} + \vec{M}_{H^{p0}} \big)^{\tran} \vec{\Phi}_u^p + (\text{affine terms}).
    \end{equation*}
    from the properties of the $\texttt{Tr}$, $\texttt{vec}$, and $\otimes$ operators \cite{Horn1991}. Therefore, the objective Eq. \eqref{eq: BR_Mapping_Vectorized_A} is equivalent to that of $\widehat{BR}_{\Phi}^p(\sig{\Phi_u}^{-p})$.
    For the constraints, consider that $\big\| ([G_u^p]_j \bar{\Phi}_{u}^p)^{\tran} \big\|_1 = \big\| \texttt{vec}([G_u^p]_j \Phi_{u,1}^p, \ldots, [G_u^p]_j \Phi_{u,N{-}1}^p) \big\|_1 = \big\| U_{\Phi} \texttt{vec}((I_{N-1} \otimes [G_u^p]_j) \Phi_{u}) \big\|_1$, where $U_{\Phi}$ is a permutation matrix. Since the $\ell_1$-norm is permutation invariant, we must have that
    \begin{equation*}
        \big\| ([G_u^p]_j \bar{\Phi}_{u}^p)^{\tran} \big\|_1 = \big\| \texttt{vec}((I_{N-1} \otimes [G_u^p]_j) \Phi_{u}) \big\|_1 = \big\| (I_{(N-1)N_x} \otimes [G_u^p]_j) \vec{\Phi}_{u} \big\|_1.
    \end{equation*}
    Therefore, the constraints Eq. \eqref{eq: BR_Mapping_Vectorized_B} are equivalent to those of $\widehat{BR}_{\Phi}^p(\sig{\Phi_u}^{-p})$. In conclusion, the problems Eq. \eqref{eq: BR_Mapping_Vectorized} and $\widehat{BR}_{\Phi}^p(\sig{\Phi_u}^{-p})$ are equivalent and ${\Phi}_{u}^{p^{\star}} = \texttt{vec}^{-1}(\vec{\Phi}_u^{p^{\star}})$ is a matrix realisation for $\sig{\Phi_u}^{p^{\star}} \in \widehat{BR}_{\Phi}^p(\sig{\Phi_u}^{-p})$.
\end{proof}

Finally, we can proceed to prove the Theorem 3.

\begin{theorem*}
    Let $\mathcal{X} = \mathbb{R}^{N_x}$, $\mathcal{U}_{\mathcal{G}} = \prod_{p\in\mathcal{P}}\mathbb{R}^{N_u^p}$, and $\mathcal{U}^p = \{ u^p_t \in \mathbb{R}^{N_u^p} : G_u^p u^p_t \preceq \mathbf{1}_{N_{\mathcal{U}}^p} \}$ with $G_u^p$ full-row-rank. Then, the best-response map $\widehat{BR}_{\Phi}$ is $L_{\widehat{BR}_{\Phi}}$-Lipschitz with $L_{\widehat{BR}_{\Phi}}^2 = \textstyle\sum_{p\in\mathcal{P}} (L_{\widehat{BR}_{\Phi}}^p)^2$, given the player-specific $L_{\widehat{BR}_{\Phi}}^p = \frac{\sigma_{\max}(\sig{H}^{p,-p})}{\sigma_{\min}(\sig{H}^{pp})}$.
\end{theorem*}
\begin{proof}\vskip-0.1cm
    Let $\Phi_u^{p}$ ($p \in \mathcal{P}$) be the matrix representation of $\sig{\Phi_u}^{p} \in \widehat{BR}_{\Phi}^p(\sig{\Phi_u}^{-p})$. Applying Lemmas \ref{lem: QP_Sol_Lipscitz}(a) and \ref{lem: BR_Reformulation}, and after some algebra, each best-response map in matrix form can be represented as the affine operator
    \begin{align*}
        \widehat{BR}_{\Phi}^p(\Phi_u^{-p}) 
            = \texttt{vec}^{-1}(q_{\Phi}^p - (I_{N_x} \otimes Q_{\Phi}^pM_{H^{p,-p}}) \texttt{vec}({\Phi}_u^{-p})) 
            = \texttt{vec}^{-1}(q_{\Phi}^p) - Q_{\Phi}^pM_{H^{p,-p}} {\Phi}_u^{-p} 
    \end{align*}
    for the appropriate matrix $Q_{\Phi}^p$ and affine vector $q_{\Phi}^p$. Therefore, each $\widehat{BR}_{\Phi}^p$ has the Lipschitz constant
    \begin{equation} \label{eq: BR_ConditionNumber}
        L_{\widehat{BR}_{\Phi}}^p = \| Q_{\Phi}^pM_{H^{p,-p}} \|_2 \leq \| Q_{\Phi}^p \|_2 \| M_{H^{p,-p}} \|_2 \leq \sigma_{\max}(M_{H^{p,-p}})/\sigma_{\min}(M_{H^{pp}}),
    \end{equation}
    due to Lemma \ref{lem: QP_Sol_Lipscitz}(b). Now, redefine these operators as $\widehat{BR}_{\Phi}^p(\Phi_u) = \bar{q}_{\Phi}^p - Q_{\Phi}^pM_{H^{p,-p}} U_{\Phi}^p {\Phi}_u$ where $U_{\Phi}^p$ is a projection matrix such that $U_{\Phi}^p {\Phi}_u = {\Phi}_u^{-p}$. The collective best-response map thus corresponds to the concatenation $$\widehat{BR}_{\Phi}^p(\Phi_u) = (\bar{q}_{\Phi}^1, \ldots, \bar{q}_{\Phi}^{N_P}) - (Q_{\Phi}^1 M_{H^{1,-1}} U_{\Phi}^1, \ldots, Q_{\Phi}^{N_P}M_{H^{N_p,-N_p}} U_{\Phi}^{N_P}) {\Phi}_u$$ and has the Lipschitz constant 
    \begin{align*}
        L_{\widehat{BR}_{\Phi}}^2 
            = \| (Q_{\Phi}^p M_{H^{p,-p}} U_{\Phi}^p)_{p\in\mathcal{P}} \|_2^2 
            = \| \textstyle\sum_{p\in\mathcal{P}} (Q_{\Phi}^p M_{H^{p,-p}} U_{\Phi}^p)^{\tran}Q_{\Phi}^p M_{H^{p,-p}} U_{\Phi}^p \|_2 
            &\leq \textstyle\sum_{p\in\mathcal{P}} \| Q_{\Phi}^p M_{H^{p,-p}} \|_2^2 
    \end{align*}
    where the last inequality follows from the submultiplicative and triangle inequality of operator norms and the fact that $\| U_{\Phi}^p \|_2 \leq 1$ for all $p \in \mathcal{P}$. Finally, using Eq. \eqref{eq: BR_ConditionNumber}, we have that $L_{\widehat{BR}_{\Phi}}^2 \leq \sum_{p\in\mathcal{P}} \frac{\sigma_{\max}(M_{H^{p,-p}})^2}{\sigma_{\min}(M_{H^{pp}})^2} = \sum_{p\in\mathcal{P}} \frac{\sigma_{\max}(\sig{H}^{p,-p})^2}{\sigma_{\min}(\sig{H}^{pp})^2}$.
\end{proof}

\vskip0.5cm
\bibliographystyle{ieeetr}
